\documentclass[12pt]{amsart}
\usepackage{amssymb}
\usepackage{amsfonts}
\usepackage{eurosym}
\usepackage{amsmath}
\usepackage{amsfonts,xcolor}
\usepackage{amssymb}
\usepackage[all]{xy}
\usepackage{amssymb}
\usepackage{subfigure}
\usepackage{graphicx}
\usepackage[open,openlevel=1]{bookmark}

\theoremstyle{plain}
\newtheorem{theorem}{Theorem}[section]
\newtheorem{lemma}[theorem]{Lemma}
\newtheorem{proposition}[theorem]{Proposition}
\theoremstyle{definition}

\newtheorem{remark}[theorem]{Remark}
\newtheorem*{acknowledgement}{Acknowledgment}

\numberwithin{equation}{section}
\makeatletter
\def\filename{\texttt{\jobname.tex}}
\makeatother
\def\zbar{\overline{z}}
\def\wbar{\overline{w}}

\def\bF{\mathbf{F}} 
\def\fe{\mathfrak{e}} 
 
\def\cF{\mathcal{F}} 
\def\cL{\mathcal{L}} 
\def\cS{\mathcal{S}} 
\def\cW{\mathcal{W}} 
\def\cY{\mathcal{Y}} 
\def\cZ{\mathcal{Z}} 
\def\num{\mathcal{N}} 
\def\R{\mathbb{R}}
\def\C{\mathbb{C}}
\def\N{\mathbb{N}}

\def\E{\mathbb{E}}
\def\P{\mathbb{P}}
\def\1{\mathbf{1}}
\def\one{\mathbf{1}}
\def\disp{\displaystyle}
\def\spec{\mathrm{Spec}} 
\def\inlaw{\stackrel{d}{=}}
\def\inlawto{\stackrel{d}{\Longrightarrow}}
\def\Tr{\operatorname{Tr}}
\def\Var{\operatorname{Var}}
\def\Cov{\operatorname{Cov}}
\def\diag{\operatorname{diag}}

\def\nablaz{\nabla_z} 
 
\def\nablazup{\nabla_z^{\uparrow}} 
\def\nablawup{\nabla_w^{\uparrow}} 
\def\nablawbarup{\nabla_{\wbar}^{\uparrow}} 
\def\nablazdown{\nabla_z^{\downarrow}} 
\def\nablawdown{\nabla_w^{\downarrow}} 

 \newcommand{\cG}{\mathcal G}

\begin{document}
\title[Zero correlations and averaged fields of orthonormal
Gaussian Functions]{Zero correlations and averaged fields of
orthonormal Gaussian Functions}
\author{Lu\'{\i}s Daniel Abreu and Tomoyuki Shirai}
\address{NuHAG, Faculty of Mathematics, University of Vienna,
Oskar-Morgenstern-Platz 1, A-1090, Vienna, Austria\\
}
\address{Institute of Mathematics for Industry, Kyushu University, 744
Motooka, Nishi-ku, Fukuoka, 819-0395, Japan\\
}
\email{abreuluisdaniel@gmail.com}
\email{shirai@imi.kyushu-u.ac.jp}
\subjclass{Primary 60G55; Secondary 60G15, 30B20, 30H20, 42C40}
\keywords{Gaussian Entire Functions, Kac-Rice formulas, pair-correlations, Landau
Levels, polyanalytic functions, white noise spectrograms}


\begin{abstract}
We consider the family of point processes $\{\cZ_{f_{n}}\}_{n=0}^{\infty }$ of
zeros of Gaussian random functions $\{f_{n}(z,\zbar)\}_{n=0}^{\infty
} $, arising from the Gaussian Entire Function 
\begin{equation*}
f_{0}(z):=\sum_{k=0}^{\infty }\zeta _{k}\frac{z^{k}}{\sqrt{k!}}\text{, \ \ }
\zeta _{k}\sim N_{\C}(0,1)\text{ i.i.d.}
\end{equation*} 
by iteration of the Landau raising operator, and orthonormal
 at each point in expectation in the sense that
\begin{equation*}
\E\left[ e^{-\left\vert z\right\vert ^{2}}f_{n}(z,\bar{z})\overline{
f_{n^{\prime }}(z,\bar{z})}\right] ={\delta }_{nn'}. 
\end{equation*}
We first show that the normalized pair correlations $g_{n,n+k}(z,w)$ of the
 pairs $(\cZ_{f_{n}},\cZ_{f_{n+k}})$ exhibit \emph{a
 pattern reminiscent of the classical interlacing of zeros of
 orthogonal polynomials}: when $w\rightarrow z$, $g_{n,n+k}$ 
displays repulsion for $k=1$, attraction for $k=2$, 
and no short-range second-order correlation for $k \ge 3$. 
We complement this with the convergence of real-valued
 averaged fields on compacts $K \subset \C$,  
\[
\lim_{N\rightarrow \infty
 }\frac{1}{N}\sum_{n=0}^{N-1}\left\vert 
f_{n}(z,\zbar)e^{-\frac{\left\vert z\right\vert ^{2}}{2}}\right\vert
^{2}\rightarrow 1 \quad \text{ almost surely in $C(K)$}, 
\]
and a functional central limit theorem for the corresponding
 scaled fluctuations, which converge to the Gaussian
 process 
\[
 \cG(z) = \frac{1}{\sqrt{\pi}} \int_{\C} \one_{B(z,1)}(u)
 dW_{\R}(u), 
\]
 where $W_{\R}$ denotes real white noise on $\C$ and $B(z,1)$
 is the unit disk centered at $z$. 
The results are motivated by problems in signal processing. 
Due to an identification with white noise spectrograms, 
they confirm conjectures of Flandrin and
 Bayram-Baraniuk and provide a rationale for the efficiency
 of high resolution time-frequency algorithms, namely \emph{ConceFT}, by
 Daubechies, Wang and Wu. 
\end{abstract}

\maketitle

\section{Introduction}

The \emph{Gaussian Entire Function} (GEF) \cite{NS,GAFbook} is defined as 
\begin{equation}
f_{0}(z):=\sum_{k=0}^{\infty }\zeta_{k}\frac{z^{k}}{\sqrt{k!}}\text{, \ \ }
\zeta_{k}\sim N_{\C}(0,1)\text{ i.i.d. } z\in \C. 
\label{GEF}
\end{equation}
Despite its apparent simplicity, $f_{0}(z)$ enjoys interesting non-trivial
properties. The papers \cite{Sodin,NS,NSTransport} and the
monograph \cite{GAFbook} triggered different lines of research, raising a
considerable interest in complex analysis, probability and time-frequency
analysis. 
It has been proved in \cite{GhoshPeres} that the GEF is determined from its zeros
up to a random phase. 
In the applied side, the zero distribution has been used in the
filtering of time-frequency representations of signals embedded in white
noise
\cite{PNAS,Silence0,Maxima,levelSets,BFC,BH,Escudero,Flbook}. 

In this paper, we consider the infinite sequence $\{f_{n}(z,\zbar
)\}_{n=0}^{\infty }$ of Gaussian random functions
obtained from (\ref{GEF}) by repeated application 
of the Landau-level raising operator (we provide precise
definitions in the next subsection): 
\begin{equation}
f_{n}(z,\zbar):=\frac{1}{\sqrt{n!}}(\partial _{z}-\zbar
)^{n}f_{0}(z)=\sum_{k=0}^{\infty }\zeta _{k}\frac{(\partial _{z}-\zbar
)^{n}z^{k}}{\sqrt{n!k!}},  \quad z \in \C, 
\label{f_n}
\end{equation}
where $\zeta _{k}\sim N_{\C}(0,1)$ are i.i.d. 
For $n>0$, the functions $f_n$ are no longer analytic.  
Our results suggest a parallel between the distribution of zeros
of $f_{n}(z,\zbar)$ and the classical interlacing property for zeros of sequences
of orthogonal polynomials on a (possibly unbounded) interval $I\subseteq 
\R$, which goes back 150 years to the work of Markov and
Stieltjes. 

The classical interlacing property states that the zeros of orthogonal polynomials of
order $n$ and $n+1$ separate each other, becoming interlaced in this sense:
between two consecutive zeros of $p_{n+1}$ there is a zero
of $p_{n}$ 
(\cite[1.2-5]{Simon}, \cite[Theorem 2.2.3]{Ismail}). The existence of such a zero
suggests a \emph{negative pair correlation} between the zeros of $p_{n+1}$ and $
p_{n}$. 
By applying the property twice to $p_{n+2}$ it is easy to see that
it implies that there will be a zero of $p_{n}$ and a zero
of $p_{n+2}$ between two zeros of $p_{n+1}$, and this suggests\ a \emph{positive
correlation} between the zeros of $p_{n}$ and $p_{n+2}$. 
This motivates a weaker notion adapted to planar zero sets. 
We say that \emph{a sequence of functions}
$\{f_{n}\}_{n=0}^{\infty }$ satisfies a \emph{weak interlacing property}, if, 
for every integer $n$, there are \emph{negative pair correlations} in the short-range
limit between the
zeros of $f_{n+1}$ and $f_{n}$ and \emph{positive pair correlations} in the short-range
limit between the
zeros of $f_{n}$ and $f_{n+2}$. (This notion will be made
precise in Section~\ref{sec:main} in terms of normalized pair correlations.)
This definition applies to functions defined
in a planar domain, while the `strong' interlacing property of orthogonal
polynomials in $I\subseteq \R$ obviously does not, and we will show
that $\{f_{n}(z,\zbar)\}_{n=0}^{\infty }$ enjoys such
a weak interlacing property. 
This weak interlacing property suggests that the energy of
successive functions is placed in complementary regions and
that the averages of the squared intensities over many
consecutive functions fill the whole plane. We will show that this is indeed the case and
provide a refined analysis of this phenomenon.

\subsection{Overview and context}

The operator acting on $f_{0}(z)$\ to obtain $f_{n}(z,\zbar)$ is the
iterated inter Landau-level \emph{raising} operator
$\nablazup = \partial _{z}-\zbar$, 
normalized by the multiplicative factor
$(n!)^{-\frac{1}{2}}$. The Landau operator with a constant
perpendicular 
magnetic field, acting on the Hilbert space $L^{2}(\C,e^{-\left\vert
z\right\vert ^{2}})$, can be written as 
\begin{equation*}
\cL_{z,\zbar}:=-\partial _{z}\partial _{\zbar}+
\zbar\partial _{\zbar}=- 
\nablazup \nablazdown,  
\end{equation*}
where $\nablazup = \partial _{z}-\zbar\ $and
$\nablazdown =\partial _{\zbar}$. The purely discrete spectrum
of $\cL_{z,\zbar}$ (the Landau Levels) is 
$\sigma (\cL_{z,\zbar})=\{n:n=0,1,2,\ldots \}$. The
operators $\nablazup$\ and $\nablazdown$ are the \emph{inter
Landau levels raising and lowering operators}: 
$\nablazup$ (resp. $\nablazdown$) 
maps eigenfunctions with eigenvalue $n$ into eigenfunctions with eigenvalue $
n+1$ (resp. $n-1$).

The eigenspace associated with the eigenvalue $n$ arises from
the action of $\nablazup$ on
$\cF^{(0)}\left( \C\right) $ (the classical
Bargmann-Fock space of entire functions in
$L^{2}(\C,e^{-\left\vert z\right\vert ^{2}})$) (\cite{Bargmann}):  
\begin{equation*}
\cF^{(n)}\left( \C\right) = 
\frac{1}{\sqrt{n}}
\nablazup\cF^{(n-1)}\left(\C\right) \quad (n
\ge 1).
\end{equation*}
This defines an infinite sequence of subspaces of
$L^{2}(\C, e^{-\left\vert z\right\vert^{2}})$, $\left\{ \cF^{(n)}\left( 
\C\right) \right\}_{n=0}^{\infty}$ such that
\cite{VasiFock} 
\begin{equation*}
L^{2}(\C,e^{-\left\vert z\right\vert
^{2}})=\bigoplus\limits_{n=0}^{\infty
}\cF^{(n)}\left( \C \right). 
\end{equation*}
Each space $\cF^{(n)}\left( \C\right) $ possesses
a reproducing kernel given as \cite{AAZ,SHIRAI} 
\begin{equation}
K_{n}(z,w)= \frac{1}{n!} (\nablazup)^{n} 
(\nablawbarup)^{n} e^{z \wbar} 
= L_{n}(|z-w|^{2})e^{z\wbar},  
\label{KernelLandau}
\end{equation}
where $L_{n}(x)=L_{n}^{0}(x)$, with $L_{n}^{\alpha }$ denoting the Laguerre
polynomial of degree $n.$ See also \cite{AAZ,SHIRAI,ProgressGinibre}.
The Landau Levels provide a model for the perfect integer quantizations
observed in the integer quantum Hall effect
\cite{Nobel,Girvin}. As in \cite[Section 2]{ForrHon1999},
where $f_{0}(z)$ and the lowest Landau Level are 
discussed, $f_{n}(z,\zbar)$ can be seen as a random superposition of
states in the $nth$ Landau level. We also observe that 
the operators $\nablazup$\ and $\nablazdown$ coincide with the
symmetric and non-symmetric part of the Chern connection on $\C$ 
\cite{Feng}, which is given by $\nablaz = \nablazup + \nablazdown$. 
Finally, we point out that, 
although $f_n$ satisfies the Landau eigenvalue equation
$\cL_{z,\zbar}f_{n}=nf_{n}$ pointwise, 
it is almost surely not an $L^2$-eigenfunction of
$\cL_{z,\zbar}$ 
since $f_{n}\notin \cF^{(n)}\left( \C\right)
$ with probability one.

We will use the notation
\begin{equation*}
\cZ_{f}
=\left\{ z \in \C:f(z,\zbar)=0 \right\}. 
\end{equation*}
For the specific Gaussian functions considered here, we will
show in Section~\ref{subsec:isolated} that these zeros are almost surely
simple and isolated; hence $\cZ_{f}$ defines a point
process. As shown in \cite[Theorem 1.8]{GWHF},
using an approach based on time-frequency methods, the $1$-point intensity
of $\cZ_{f_{n}}$ with respect to Lebesgue measure is 
\begin{equation}
\rho _{\cZ_{f_{n}}}^{(1)}(z)=\frac{1}{\pi
 }(n+1/2+\frac{1}{4n+2}). 
\label{onepoint}
\end{equation}

Our main results provide the correlations between
$\cZ_{f_{n}}$ and 
$\cZ_{f_{n+k}}$, for all $n\geq 0$ and $k\geq 1$, 
in the short-range limit ($r \rightarrow 0$) and in the long-range limit ($r \rightarrow \infty $)
, where $r$ measures the distance of
pairs of points $(z,w)\in (\cZ_{f_{n}},
\cZ_{f_{n+k}})$. 
Our main results suggest a random planar analogue
of the interlacing property of zeros in the following sense: the pair
correlation of $(\cZ_{f_{n}},\cZ_{f_{n+1}})$
displays \emph{repulsion at small scales} (and no
interaction at large scales), 
so that zeros of consecutive elements of the sequence tend to separate each other;
the pair correlation of $(\cZ_{f_{n}},\cZ_{f_{n+2}})$
displays \emph{attraction at small scales,} but, for $k>2$, $(\cZ_{f_{n}},\cZ_{f_{n+k}})$ are \emph{uncorrelated at
small scales}. 
As far as we know, these are the first results providing information
about the interactions among all possible pairs of different elements of an
infinite family of orthogonal Gaussian processes.

The notation $f_{n}(z,\zbar)$ is used to emphasize
the dependence 
in $z$ and $\zbar$, since for $n>0$, it is not an entire function, it
only satisfies the weaker polyanalytic condition $\partial _{\zbar
}^{n+1}f_{n}(z,\zbar)=0$ \cite{Abr2010,Balk,HendHaimi,VasiFock}.
Equivalently, they are polynomials $F(z,\zbar)$ of degree $n$ in $
\zbar$, whose coefficients $F_{i}(z)$ are analytic functions:
\begin{equation}
F(z,\zbar)=F_{0}(z)+\zbar F_{1}(z)+...+\zbar^{n}F_{n}(z). 
\label{poly}
\end{equation}

The zeros of these functions are far from being understood
(see, e.g., \cite{PolyanalyticZeros} for a recent study on
the topic). In general, $F(z,\zbar)$ can have
both isolated points and non-discrete curves as zeros 
(e.g. the zero set of $F(z,\zbar)=z-\zbar z^2$ of degree $1$
is $\{0\} \cup \{|z|=1\}$). The
processes considered in this paper 
can be seen as a `Gaussian functions analogue' of the determinantal point
processes in higher Landau Levels
\cite{HaiAron,abgrro17,HendHaimi,HaiHen2,SHIRAI,MakotoShirai,Deviations},
usually referred to as polyanalytic ensembles.

To understand the interactions between the zeros of \emph{pairs of different
Gaussian processes}
$(\cZ_{f_{n}},\cZ_{f_{n+k}})$, for all
$n\geq 0$ and $k\geq 1$ we start with adapted Kac-Rice
formulas \cite{level}. 
Kac-Rice formulas were used to study averages of critical points of
non-holomorphic Gaussian processes in compact Riemann surfaces arising in
String Theory \cite{DZS}, and the associated value
distribution \cite{FZ,FZ1}. This was adapted to confirm a
prediction of Flandrin \cite{Fl1} about local maxima of
white noise spectrograms and their value distribution
\cite{Maxima}. 
A well-known application of the Kac-Rice formula is the
analysis of critical points of smooth Gaussian functions on
the $N$-dimensional sphere \cite{AB13}. 

To compute the correlations between $\cZ_{f}$ and $\cZ_{g}$,
with $f\neq g$, we will derive the following Kac-Rice formula, which is
adapted from \cite[Exercise 6.1]{level} to our setting: 
\begin{equation}
\rho _{\cZ_{f},\cZ_{g}}^{(2)}(z,w)
=\E\left[
\left\vert |\nablazup f|^{2}-|\nablazdown f|^{2} 
\right\vert \cdot \left\vert |\nablawup g|^{2}-|\nablawdown g|^{2}\right\vert \text{ }|\ f(z)=g(w)=0\right] \cdot
p_{(f(z),g(w))}(0,0). 
\label{Kak-Rice}
\end{equation}
To compute this conditional expectation, 
we introduce a $6$-dimensional complex Gaussian
vector and its covariance matrix.  
While this leads to intractable formulas for $\rho _{\cZ_{f_{n}},
\cZ_{f_{n+k}}}^{(2)}(z,w)$, we found a way of using the structure of
the Landau equation to compute the short-range limits $\lim_{z\rightarrow w}\rho _{
\cZ_{f_{n}},\cZ_{f_{n+k}}}^{(2)}(z,w)$ and
the long-range limit $\lim_{|z-w| \to \infty }\rho_{\cZ_{f_{n}},\cZ
_{f_{n+k}}}^{(2)}(z,w)$ without resorting to symbolic algebra.

\section{Main results}\label{sec:main}

In this section, we present our findings on zero
correlations and behavior of averaged fields. 
Two central results are Theorem~\ref{Main}, which
describes the correlation patterns of zeros, and 
Theorem~\ref{thm:functional-clt}, which establishes a
functional central limit theorem for the 
scaled fluctuations of the averages, showing that they 
converge to an interesting Gaussian process in the limit. 

\subsection{Correlations between $\cZ_{f_{n}}$ and $\cZ_{f_{n+k}}$}

Denote by $\rho_{n,n+k}^{(2)}(z,w)$ the $2$-point intensity
of the pair of point processes 
$(\cZ_{f_{n}}, \cZ_{f_{n+k}})$ with respect
to Lebesgue measure: 
\begin{equation*}
 \E \left[ \sum_{a \in \cZ_{f_{n}}}
 \delta_a(dz) \otimes \sum_{b \in \cZ_{f_{n+k}}} \delta_b(dw)
    \right]
= \rho_{n,n+k}^{(2)}(z,w)dzdw. 
\end{equation*}
We define the \emph{normalized} \emph{pair correlation function }$
g_{n,n+k}(z,w)$ by\emph{\ } 
\begin{equation*}
g_{n,n+k}(z,w):=\frac{\rho _{n,n+k}^{(2)}(z,w)}{\rho _{n}^{(1)}(z)\rho
_{n+k}^{(1)}(w)}. 
\end{equation*}
The normalized pair correlation function provides the following information.

\begin{enumerate}
\item If $g_{n,n+k}(z,w)<1$, $\cZ_{f_{n}}$ and
      $\cZ_{f_{n+k}} $ have \emph{negative pair 
      correlations} (points in $\cZ_{f_{n}}$ and
      $\cZ_{f_{n+k}}$ tend to \emph{repel} each
      other at $(z,w)$). 

\item If $g_{n,n+k}(z,w)=1$, $\cZ_{f_{n}}$ and
      $\cZ_{f_{n+k}}$ are \emph{uncorrelated} at
      $(z,w)$ at the level of the pair correlation. 

\item If $g_{n,n+k}(z,w)>1$, $\cZ_{f_{n}}$ and $\cZ
_{f_{n+k}} $ have \emph{positive pair correlations} (points in
      $\cZ_{f_{n}}$ and $\cZ_{f_{n+k}}$ tend
      to \emph{attract} each other at $(z,w)$). 
\end{enumerate}

From this point on, we will make free use of the following notational
convention for the short-range limit of the correlations
between $\cZ_{f_{n}}$ and $\cZ_{f_{n+k}}$:
\begin{equation}
g_{n,n+k}(z,z) := \lim_{w\rightarrow z,w\not=z}g_{n,n+k}(z,w), 
\label{eq:gzz}
\end{equation}
that is, $g_{n,n+k}(z,z)$ is the off-diagonal contact
limit. 
Our main result provides the explicit values of $g_{n,n+k}(z,z)$ for all $n$
and $k$ (see \eqref{precise}
in Section~\ref{subsec:proofoftheorem1} for the precise
values of $g_{n,n+k}(z,z)$). 
To facilitate the recognition of the correlation pattern, we
only provide the asymptotic expansion for large $n$. 
The exact formulas in \eqref{precise} imply the stated inequalities
for all $n\geq 0$; the theorem displays their large-$n$
expansions. 

\begin{theorem}\label{Main} 
For fixed $k \ge 1$, as $n\rightarrow \infty $, 
\begin{equation*}
g_{n,n+k}(z,z) = 
\begin{cases}
\disp 1-\frac{5}{4n^{2}}+O\left( \frac{1}{n^{3}}\right) <1 &
 \text{if $k=1$}, \\
\disp \frac{4}{3}
+\frac{1}{12n^2}+O\left( \frac{1}{n^3}\right) 
>1 &
 \text{if $k=2$},  \\
\disp 1 & \text{if $k\geq 3$}.  
\end{cases}
\end{equation*}
Thus, the short-range correlations between $\cZ_{f_{n}}$ and $
\cZ_{f_{n+k}}$ display repulsion if $k=1$, attraction if $k=2$ and
no correlations if $k\geq 3$. 
\end{theorem}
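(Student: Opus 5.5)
The plan is to evaluate the Kac--Rice formula \eqref{Kak-Rice} directly on the diagonal. On the diagonal the relevant Gaussian vector consists of the values $f_j(z)$ for a few consecutive $j$, and these turn out to be independent. The three cases $k=1$, $k=2$, $k\ge 3$ then correspond to how the Jacobian factors of $f_n$ and $f_{n+k}$ share or condition common variables.

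\textbf{Step 1: ladder relations.} The commutator is $[\partial_{\zbar},\partial_z-\zbar]=-1$ and $\partial_{\zbar}f_0=0$. From the definition \eqref{f_n} we therefore get
\[
\nablazup f_n=\sqrt{n+1}\,f_{n+1},\qquad \nablazdown f_n=-\sqrt{n}\,f_{n-1}.
\]
Hence the Jacobian in \eqref{Kak-Rice} is
\[
J_n(z)=(n+1)|f_{n+1}|^2-n|f_{n-1}|^2 .
\]
The orthonormality stated in the abstract says that for fixed $z$ the variables $e^{-|z|^2/2}f_j(z)$, $j\ge 0$, are i.i.d.\ $N_{\C}(0,1)$. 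I will verify this from the covariance kernel $\frac{1}{\sqrt{n!m!}}(\nablazup)^n(\nablawbarup)^m e^{z\wbar}$ evaluated at $w=z$.

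\textbf{Step 2: contact limit.} The covariance of $(f_n(z),f_{n+k}(w))$ stays nondegenerate as $w\to z$, since it tends to a diagonal matrix. So $p_{(f_n(z),f_{n+k}(w))}(0,0)\to 1/(\pi^2e^{2|z|^2})$. The full covariance of the six-dimensional vector entering \eqref{Kak-Rice} is continuous in $(z,w)$, and the integrand grows polynomially. Together these should let me pass to the limit inside the conditional expectation by Gaussian regression and dominated convergence. This yields
\[
\rho^{(2)}_{n,n+k}(z,z)=\frac{1}{\pi^2}\,\E\bigl[\,|J_n|\,|J_{n+k}|\ \big|\ f_n=f_{n+k}=0\bigr],
\]
with everything evaluated at the single point $z$. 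Similarly, $\rho^{(1)}_n=\frac1\pi\E|J_n|$.

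For independent $U,V\sim\mathrm{Exp}(1)$ one has $\E|aU-bV|=(a^2+b^2)/(a+b)$. This gives
\[
\rho_n^{(1)}=\frac{1}{\pi}A_n,\qquad A_n:=\frac{(n+1)^2+n^2}{2n+1},
\]
which matches \eqref{onepoint} and serves as a consistency check.

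\textbf{Step 3: the three cases.}

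For $k\ge3$, the indices $\{n-1,n+1\}$ and $\{n+k-1,n+k+1\}$ are disjoint. Neither set contains $n$ or $n+k$, so the conditioning is irrelevant and the expectation factorizes, giving $g=1$.

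For $k=1$, the conditioning kills $f_{n+1}$ in $J_n$ and $f_n$ in $J_{n+1}$. This leaves $n|f_{n-1}|^2\cdot(n+2)|f_{n+2}|^2$ with independent factors, so
\[
g_{n,n+1}=\frac{n(n+2)}{A_nA_{n+1}}.
\]
Expanding with $A_n=n+\tfrac12+\tfrac{1}{4n+2}$ gives $1-\tfrac{5}{4n^2}+O(n^{-3})$. Indeed $n(n+2)<A_nA_{n+1}$ for all $n$, because $A_n>n+\tfrac12$.

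For $k=2$, the conditioning does nothing, but $V=|f_{n+1}|^2$ appears in both factors. With $U=|f_{n-1}|^2$ and $W=|f_{n+3}|^2$, and $U,V,W$ i.i.d.\ $\mathrm{Exp}(1)$, one must compute
\[
\E\bigl[\,|(n+1)V-nU|\cdot|(n+3)W-(n+2)V|\,\bigr].
\]
The plan is to condition on $V$, using $\E|c-aU|=c-a+2ae^{-c/a}$ for $c\ge0$, and then integrate the resulting exponential-polynomial expressions in $V$. This gives a rational function of $n$. Dividing by $A_nA_{n+2}$ then gives the exact value for $g_{n,n+2}$, and I would check that its large-$n$ expansion is $\tfrac43+\tfrac1{12n^2}+O(n^{-3})$. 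As a heuristic check, the leading value $4/3$ is $\E|V-U|\,|W-V|$ for unit weights.

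\textbf{Main obstacle.} I expect the main difficulty to be the rigorous justification of Step 2. One has to interchange the limit $w\to z$ with the conditional expectation, control the regression coefficients of $(\nabla f_n(z),\nabla f_{n+k}(w))$ on $(f_n(z),f_{n+k}(w))$ near the diagonal, and confirm that no hidden degeneracy appears. The $k=2$ integral is elementary but tedious. Proving exact positivity of $g_{n,n+2}-1$ for every $n$ from the closed form is the second place where care is needed.
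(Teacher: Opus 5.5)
Your proposal is correct and follows essentially the same route as the paper. The paper also uses the ladder relations $\nablazup f_n=\sqrt{n+1}\,f_{n+1}$, $\nablazdown f_n=-\sqrt n\, f_{n-1}$ and the pointwise orthonormality $s_{n,l}(z,z)=e^{|z|^2}\delta_{nl}$ to reduce the contact limit to a limiting Schur complement $\Lambda_{n,k}(z,z)$. That matrix encodes exactly your index-overlap bookkeeping: it is degenerate for $k=1$, shares one variable $|f_{n+1}|^2$ for $k=2$, and is fully independent for $k\ge 3$. The paper justifies $w\to z$ by the same uniform-integrability argument, and your conditioning on the shared exponential for $k=2$ is precisely Lemma~\ref{lem:formula1}, leading to \eqref{precise} whose expansion is indeed $\frac43+\frac{1}{12n^2}+O(n^{-3})$. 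One small slip: your displayed formulas for $\rho^{(2)}$ and $\rho^{(1)}$ need factors $e^{-2|z|^2}$ and $e^{-|z|^2}$ unless $J_n$ is written in terms of the normalized $X_j=e^{-|z|^2/2}f_j$.
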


\begin{remark}\label{rem:corr=1}
For every $n\ge 0$ and every $k\ge 3$,
\[
g_{n,n+k}(z,z)=1.
\]
Thus the absence of contact interaction for gaps $k\ge 3$ is exact, and not
merely asymptotic.
\end{remark}

We observe from (\ref{precise}) that there is
\emph{outstanding repulsion} 
$g_{n,n+1}(z,z)=0$ if and only if $n=0$. 
Moreover, $g_{n,n+1}(z,z)$ increases with $n$ and tends
to $1$. For $k=2$, one has $g_{n,n+2}(z,z)>1$ for every
$n\geq 0$. The first values are $g_{0,2}=79/65$,
$g_{1,3}=5821/4375$, and $g_{2,4}=9041/6765$; the maximum is
attained at $n=2$, and $g_{n,n+2}(z,z)$ is strictly
decreasing for $n\geq 2$, with limit $4/3$. 
Curiously, $1$ and $4/3$ are the expected number of zeros and saddle
points of $f_0(z)$, respectively \cite{Maxima}. Although we do not
have a conceptual explanation for this coincidence, the appearance
of $4/3$ may have a structural origin. See
Remark~\ref{rem:4/3}. 

Theorem~\ref{Main} shows that the zeros of consecutive 
functions (and consequently, their energy patches) tend to be
placed in complementary regions of the plane, suggesting
that, for large $N$, their average fills the whole plane. 
This suggests that the limit of 
\begin{equation}
S_N(z)=\frac{1}{N}\sum_{n=0}^{N-1}\left\vert e^{-\frac{\left\vert
z\right\vert ^{2}}{2}}f_{n}(z,\zbar)\right\vert ^{2},
\label{average0}
\end{equation}
should approach $1$. This has been conjectured in
\cite[Section 10.2]{Flbook}, using similar considerations
with local maxima instead of zeros (see
Section~\ref{subsec:time-frequency} for the time-frequency
interpretation of the results.) 
Theorem~\ref{Main} will be complemented by a proof of this
conjecture: we will show that 
$S_N(z)\rightarrow 1$, 
first pointwise almost surely, and then, on compact sets, 
almost surely as a continuous process on $\C$. 
The precise statements are presented in the next subsection. 

\begin{figure}[tbp]
\centering
\includegraphics[scale=0.35]{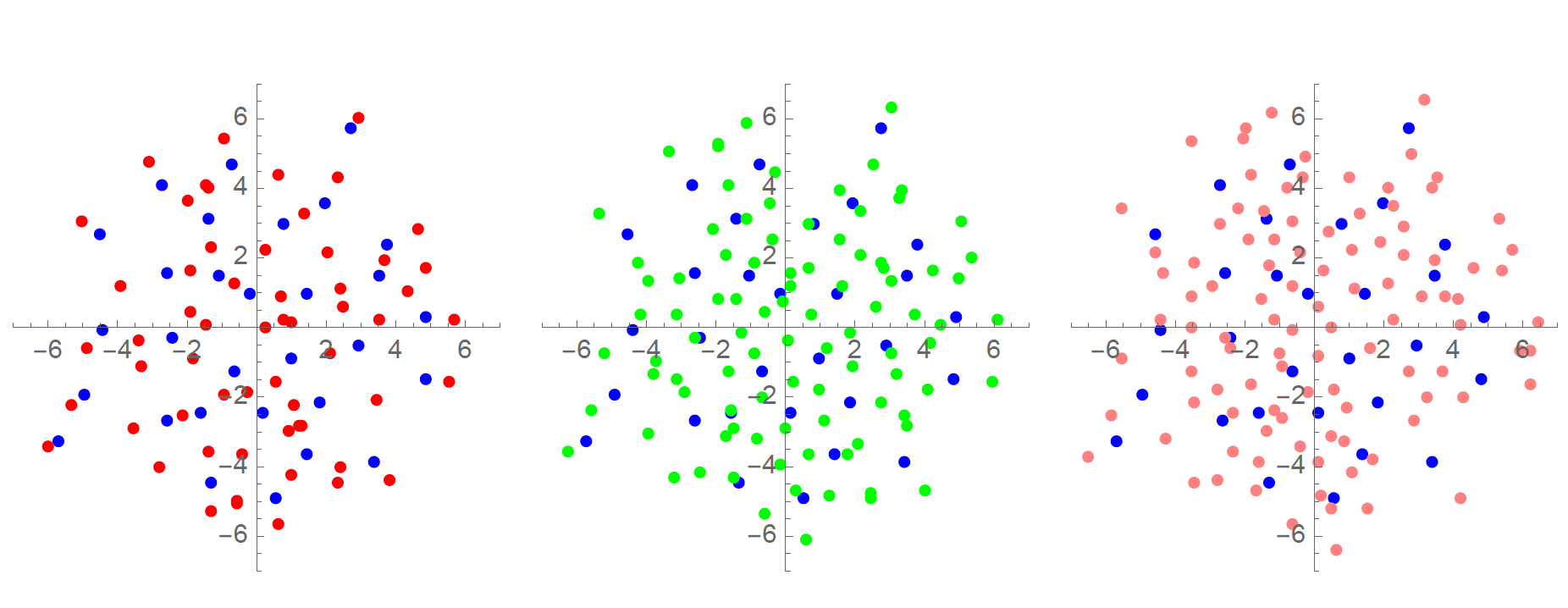}
\caption{The zeros of $f_k^{(30)}(z,\zbar)$ for $k=0$ (blue), $k=1$
(red), $k=2$ (green), and $k=3$ (pink), where $f_k^{(N)}(z,\zbar)$ is
the truncated polynomial of $f_k(z,\zbar)$ up to degree $N$. The
numbers of zeros are $30$ ($k=0$), $59$ ($k=1$), $94$ ($k=2$), $103$ ($k=3$). From left to
right, $(0,1)$, $(0,2)$ and $(0,3)$ correlations are illustrated.}
\label{fig:1}
\end{figure}

Theorem~\ref{Main} is illustrated in Figure~\ref{fig:1} for the cases $n=0$ and $k=1,2,3$,
where zeros of pairs of consecutive truncated random polynomials obtained
from $f_{n}(z,\zbar)$ are superimposed. The first picture on the left
illustrates the outstanding repulsion between
$\cZ_{f_{0}}$ and $\cZ_{f_{1}}$. The second
picture shows the attraction between $\cZ_{f_{0}}$
and $\cZ_{f_{2}}$, while the neutral 
correlations between $\cZ_{f_{0}}$ and $\cZ_{f_{3}}$ are
visible in the third picture. The apparent clustering in the
third picture is consistent with 
the absence of short-range second-order correlation,
together with the increase in the number of
zeros and the weaker self-repulsion 
within $\cZ_{f_{n}}$ as $n$
increases (this was shown for $n=1$ in \cite{Baber}).

The next result gives the long-range limit of the
correlations between $\cZ_{f_{n}}$ and
$\cZ_{f_{n+k}}$. As expected, for every $n \ge 0$ 
and $k \ge 1$, 
long-range correlations between zeros vanish in the limit.

\begin{theorem}\label{LongRange} 
As $|z-w| \rightarrow \infty $, for every
 $n \ge 0$ and $k \ge 1$, 
\begin{equation*}
g_{n,n+k}(z,w)\rightarrow 1. 
\end{equation*}
\end{theorem}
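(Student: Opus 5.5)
We will prove Theorem~\ref{LongRange} with the Kac--Rice formula \eqref{Kak-Rice}. The idea is to show that all cross-covariances between the Gaussian data at $z$ and the data at $w$ decay like $e^{-|z-w|^2/2}$ times a polynomial in $|z-w|$. In the limit the conditional expectation then factorizes into the product of the two one-point Kac--Rice expectations.

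\emph{Covariance structure.} We work with the normalized fields $\tilde f_n(z)=e^{-|z|^2/2}f_n(z,\zbar)$, together with the corresponding normalized covariant derivatives $\nablazup f_n$ and $\nablazdown f_n$. These derivatives are again linear combinations of Landau-level fields: $\nablazup f_n=\sqrt{n+1}\,f_{n+1}$, and $\nablazdown f_n$ is proportional to $f_{n-1}$, using the commutation relation $[\nablazdown,\nablazup]=-1$. Hence the $6$-dimensional complex Gaussian vector
\[
\bigl(f_n(z),\nablazup f_n(z),\nablazdown f_n(z),f_{n+k}(w),\nablawup f_{n+k}(w),\nablawdown f_{n+k}(w)\bigr)
\]
has entries that are multiples of $f_m(z)$ and $f_{m'}(w)$ with $m,m'$ ranging over a finite set. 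The covariance of $f_m(z)$ and $f_{m'}(w)$ is obtained by applying $(m!m'!)^{-1/2}(\nablazup)^m(\nablawbarup)^{m'}$ to $e^{z\wbar}$, as in \eqref{KernelLandau}. This gives $e^{z\wbar}P_{m,m'}(z-w,\overline{z-w})$ for a polynomial $P_{m,m'}$; explicitly it is a generalized Laguerre polynomial times a monomial in $\overline{z-w}$ or $z-w$. After multiplying by $e^{-|z|^2/2-|w|^2/2}$, its modulus is $|P_{m,m'}|e^{-|z-w|^2/2}$, which tends to $0$ as $|z-w|\to\infty$. The diagonal blocks are independent of $z$ and $w$: on them one has $\E|f_m|^2e^{-|z|^2}=1$, and orthogonality holds within a point. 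So the covariance matrix converges to a block-diagonal matrix $\Sigma_z\oplus\Sigma_w$. Each block is exactly the covariance used in the one-point Kac--Rice formula that yields \eqref{onepoint}.

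\emph{Passage to the limit.} Since $\Sigma_z,\Sigma_w$ are fixed and nondegenerate, the density $p_{(f_n(z),f_{n+k}(w))}(0,0)$ converges to the product of the one-point densities. The weights $e^{|z|^2}$ cancel consistently with the numerator, or we simply work with normalized fields throughout. The conditional covariance of the derivative vector given $f_n(z)=f_{n+k}(w)=0$ is a Schur complement, which depends continuously on the full covariance matrix. It therefore converges to the block-diagonal conditional covariance, in which the $z$-block and the $w$-block are conditionally independent. The integrand
\[
\bigl||\nablazup f|^2-|\nablazdown f|^2\bigr|\cdot\bigl||\nablawup g|^2-|\nablawdown g|^2\bigr|
\]
is a continuous function of polynomial growth, of degree $4$. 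The Gaussian expectation of such a function is continuous in the covariance, by dominated convergence applied after writing the vector as $A\xi$ with $\xi$ standard and $A$ bounded. Hence the conditional expectation converges to the product of the two one-point conditional expectations. This gives $\rho^{(2)}_{n,n+k}(z,w)\to\rho^{(1)}_n\rho^{(1)}_{n+k}$, which is $g_{n,n+k}\to1$.

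\emph{Main obstacle.} The delicate points are uniformity and the legitimacy of \eqref{Kak-Rice} itself. First, we must check that the limiting covariance blocks are nondegenerate. This amounts to linear independence of $f_n(z)$ and its derivatives, which holds because they are distinct Landau levels (with $\nablazdown f_0=0$ handled separately for $n=0$) and hence orthogonal at a point. Second, the phase factors $e^{i\operatorname{Im}(z\wbar)}$ are harmless, since only moduli of the off-diagonal entries enter in the limit. Third, the polynomial prefactors $P_{m,m'}$ grow at most polynomially, so they are dominated by the Gaussian decay. The rest reduces to the continuity argument above.
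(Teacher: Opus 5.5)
Your proposal is correct and follows essentially the same route as the paper: Kac--Rice, then the observation that the cross-covariances between the data at $z$ and at $w$ are negligible against the diagonal entries. The paper places the pair at $(z,w)=(r,0)$ and compares the polynomial Hermite terms with $e^{r^2}$; you normalize and obtain the uniform bound $|P_{m,m'}|e^{-|z-w|^2/2}$. In both cases the conditional covariance becomes block diagonal and the expectation factorizes into the two one-point Kac--Rice integrals, which you identify directly with $\rho^{(1)}_n\rho^{(1)}_{n+k}$ instead of evaluating via Lemma~\ref{lem:formula1}. One small correction: nondegeneracy is needed only for the pair $(f_n(z),f_{n+k}(w))$, not for the full blocks, so the degenerate derivative block $\nablazdown f_0=0$ at $n=0$ causes no difficulty in your continuity argument.
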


\subsection{Limits of averages}

We first observe that, due to
\eqref{eq:discrete-orthogonality}, 
\begin{equation*}
\E\left[ \frac{1}{N}\sum_{n=0}^{N-1}\left\vert e^{-\frac{\left\vert
z\right\vert ^{2}}{2}}f_{n}(z,\zbar)\right\vert
^{2}\right] =1. 
\end{equation*}
By orthogonality and joint Gaussianity, for each fixed $z \in \C$,  
$\{|e^{-{|z|^2/2}} f_n(z,\zbar)|^2\}_{n \ge 0}$ are
i.i.d. exponential random variables with mean $1$. 
Hence, it immediately follows that the law of large numbers 
\begin{equation}
\lim_{N\rightarrow \infty} 
\frac{1}{N}\sum_{n=0}^{N-1}
\left\vert f_{n}(z,\zbar)e^{-\frac{\left\vert z \right\vert^{2}}{2}}
\right\vert^{2}\rightarrow 1\text{ a.s. }, 
\label{eq:1} 
\end{equation}
and the central limit theorem 
\begin{equation}
\frac{\sum_{n=0}^{N-1}\left\vert f_{n}(z,\zbar)e^{-\frac{\left\vert
z\right\vert^{2}}{2}}\right\vert^{2}-N}{\sqrt{N}} \inlawto N(0,1). 
\label{eq:2} 
\end{equation}
hold. We now strengthen \eqref{eq:1} and \eqref{eq:2} to the process level, 
and establish the functional law of large numbers, and after
appropriate rescaling, the functional central limit theorem.

In what follows, we use the following notation. 
\[
X_{n}(z):=e^{-|z|^{2}/2}\,f_{n}(z,\bar{z}),\qquad S_{N}(z):=\frac{1}{N}
\sum_{n=0}^{N-1}|X_{n}(z)|^{2}\quad (z\in \C). 
\]

\begin{theorem}\label{thm:functionalLLN}
For any compact $K\subset \C$, we have 
\begin{equation*}
S_{N} \to 
1\quad \text{a.s. in }C(K),\quad \text{i.e.}\quad
\sup_{z\in K}|S_{N}(z)-1| 
\to 
0 \quad \text{a.s.} \qquad (N\rightarrow
\infty). 
\end{equation*}
Here $C(K)$ is the space of continuous functions on $K$. 
\end{theorem}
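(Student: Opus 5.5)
Since pointwise a.s. convergence already holds by \eqref{eq:1}, the plan is to upgrade it to uniform convergence on $K$. We combine a countable dense set with a uniform-in-$N$ equicontinuity (Lipschitz) estimate for $S_N$ on $K$. By \eqref{eq:1}, almost surely $S_N(q)\to 1$ simultaneously for all $q$ in a countable dense set $D\subset K'$, where $K'$ is a slightly larger compact neighbourhood of $K$. So it suffices to show that almost surely
\[
\limsup_{N\to\infty}\sup_{z\in K}|\nabla S_N(z)| <\infty ,
\]
where $\nabla$ is the real gradient. Then for every $\varepsilon>0$ we cover $K$ by finitely many balls of radius $\delta$ centered at points of $D$, and uniform convergence follows from
\[
|S_N(z)-1|\le |S_N(q)-1|+ \delta\, \sup_{K'}|\nabla S_N| .
\]

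\textbf{Gradient bound.} We compute $\partial_z|X_n|^2$ and $\partial_{\zbar}|X_n|^2$ using the Landau operators. One has $e^{|z|^2/2}\partial_z(e^{-|z|^2/2}F)=(\partial_z-\zbar/2)F$. Writing $\partial_z - \zbar/2=\nablazup + \zbar/2$ and using $\nablazup f_n=\sqrt{n+1}\,f_{n+1}$, we get
\[
\partial_z X_n = \sqrt{n+1}\,X_{n+1}+\tfrac{\zbar}{2}X_n .
\]
Similarly, $\partial_{\zbar}$ applied to $e^{-|z|^2/2}f_n$ gives $e^{-|z|^2/2}(\nablazdown f_n - \tfrac z2 f_n)$, with $\nablazdown f_n=\sqrt n\, f_{n-1}$. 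The latter holds because $\partial_{\zbar}(\partial_z-\zbar)^n=(\partial_z-\zbar)^n\partial_{\zbar}-n(\partial_z-\zbar)^{n-1}$ and $\partial_{\zbar} f_0=0$, up to a sign that has to be tracked. The danger is the factor $\sqrt n$: a naive bound gives $|\nabla S_N|\lesssim \frac1N\sum_n \sqrt n\,|X_n||X_{n\pm1}|$, which is of order $\sqrt N$. This is the main obstacle. To handle it we exploit cancellation: $\partial_z|X_n|^2=(\partial_z X_n)\overline{X_n}+X_n\overline{\partial_{\zbar}X_n}$. Summing over $n$, the terms $\sqrt{n+1}X_{n+1}\overline{X_n}$ and $-\sqrt{n+1}X_{n+1}\overline{X_{n}}$ (coming from $\overline{\partial_{\zbar}X_{n+1}}\ni -\sqrt{n+1}\,\overline{X_n}$) should telescope, since $|\,\cdot\,|^2$ summed over all Landau levels is a "flat" quantity. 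I expect the result to be
\[
\partial_z\sum_{n=0}^{N-1}|X_n|^2=\pm\sqrt{N}\,X_N\overline{X_{N-1}}+(\text{terms }\tfrac{\zbar}{2}|X_n|^2-\tfrac{\zbar}{2}|X_n|^2\text{, which cancel}),
\]
so that $\nabla S_N$ is essentially $N^{-1/2}X_N\overline{X_{N-1}}$. The first thing to do is to verify the sign conventions so that this telescoping really occurs. If it fails, the fallback is a chaining/Kolmogorov moment estimate on the increments $S_N(z)-S_N(w)$, using the covariance $\E[X_n(z)\overline{X_n(w)}]=$ (modulus of) $L_n(|z-w|^2)e^{-|z-w|^2/2}$ from \eqref{KernelLandau}.

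\textbf{Controlling the boundary term.} It then remains to show that almost surely
\[
\sup_{z\in K'}N^{-1/2}|X_N(z)||X_{N-1}(z)|\to 0,
\]
or at least that this quantity stays bounded. Each $X_N$ is a stationary-modulus Gaussian field with unit variance whose derivatives have variance $O(N)$, so we would use a Sobolev-type estimate. Concretely, we bound $\E\sup_{K'}|X_N|^{p}$ by $\int_{K''}\E(|X_N|^p+|\partial X_N|^p+|\partial^2 X_N|^p)$, using the Sobolev embedding $W^{2,p}\subset C$ in dimension 2, or $W^{1,p}$ with $p>2$. The derivatives are again expressed via $X_{N\pm1},X_{N\pm2}$ with $\sqrt N$ factors, which gives $\E\sup_{K'}|X_N|^p\lesssim N^{p/2}$ with $W^{1,p}$, $p>2$. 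Hence
\[
\E\Bigl[\sup_{K'} N^{-1/2}|X_N||X_{N-1}|\Bigr]^{p} \lesssim N^{-p/2}\,N^{p/2}\cdot N^{p/2}\cdot(\dots),
\]
which is too crude. A better route is a net of mesh $N^{-1}$ in $K'$ (about $N^2$ points), combined with Gaussian tails at the net points and the derivative bound between them. This gives $\sup_{K'}|X_N|\lesssim\sqrt{\log N}$ almost surely for large $N$, by Borel–Cantelli with tail $N^2 e^{-c\log^2\! N}$. Then $N^{-1/2}|X_N||X_{N-1}|\lesssim \log N/\sqrt N\to0$, which yields the gradient bound, and the density argument of the first paragraph finishes the proof.
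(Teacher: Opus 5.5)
Your proof is correct, and its key step is the same as the paper's. That step is the telescoping identity $\partial_z\sum_{n=0}^{N-1}|X_n|^2=\sqrt N\,X_N\overline{X_{N-1}}$ of \eqref{eq:telescoping}, which gives $|\nabla S_N|=2N^{-1/2}|X_N||X_{N-1}|$. Your commutator identity actually gives $\nablazdown f_n=-\sqrt n\,f_{n-1}$, not $+\sqrt n\,f_{n-1}$, and it is this minus sign that produces the cancellation.

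The two routes differ in how this identity is turned into uniform control on $K$. The paper never bounds a supremum of a Gaussian field. It uses only the pointwise moments $\E|\nabla S_N(z)|^p=C_pN^{-p/2}$, integrates them over a neighbourhood $U\supset K$, and applies Morrey's inequality $[S_N]_{C^{0,\alpha}(K)}\le C\|\nabla S_N\|_{L^p(U)}$ with $p>2$. Borel--Cantelli then gives $[S_N]_{C^{0,\alpha}(K)}\to0$ a.s., and convergence at a single point finishes the proof.

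You instead bound $\sup_{K'}|X_N|$ almost surely by a net argument. This works, but the details need adjusting. The a priori estimate you describe ($W^{1,p}$ embedding, Markov, Borel--Cantelli) only gives $\sup_{K'}|X_M|\le M^{1/2+\epsilon}$ eventually, hence $\sup_{K'}|\nabla X_N|\lesssim N^{1+\epsilon}$. A net of mesh $N^{-1}$ then yields $\sup_{K'}|X_N|\lesssim\sqrt{\log N}+N^{\epsilon}$, not $\sqrt{\log N}$. There are two fixes:
\begin{enumerate}
\item Refine the mesh to $N^{-2}$: about $N^4$ points, threshold $\sqrt{6\log N}$, union bound $N^{-2}$, and increments of order $N^{-1+\epsilon}$ between net points.
\item Keep mesh $N^{-1}$ and note that $N^{-1/2}N^{2\epsilon}\to0$ for $\epsilon<1/4$.
\end{enumerate}
Also, a threshold $C\sqrt{\log N}$ on $N^2$ points gives the union bound $N^{2-C^2}$, not $N^2e^{-c\log^2 N}$. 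Either is summable for suitable constants.

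Your route buys a quantitative Lipschitz statement, $\sup_{K'}|\nabla S_N|=O(\log N/\sqrt N)$ a.s. This is stronger than the paper's $C^{0,\alpha}$ convergence for $\alpha<1$. The paper's route buys economy: Morrey's inequality needs only $L^p$-integrability of $\nabla S_N$, whose moments are explicit, so no discretization or a priori derivative bounds are required. Finally, once $\sup_{K'}|\nabla S_N|\to0$, your dense-set covering is unnecessary: as in the paper, convergence at one point of a convex $K'\supset K$ suffices.
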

In fact, a stronger H\"older-space version is proved in
Section~\ref{sec:proof34}; see Theorem~\ref{thm:holder}. 

For $z_0 \in \C$ we define the rescaled field $\cG_N^{(z_0)}
= \{\cG_N^{(z_0)}(\xi)\}_{\xi \in \C}$ by 
\[
\cG_N^{(z_0)}(\xi) := 
\sqrt{N} (S_N(z_0+\sqrt{N}\,\xi) -1),
\qquad
\xi \in \C.
\]
We will prove a functional central limit theorem for the
real-valued field $\cG_N^{(z_0)}$. 

\begin{theorem}\label{thm:functional-clt}
Fix $z_0 \in \C$ and a compact set $K \subset \C$. Then
\[
\cG_N^{(z_0)} \inlawto \cG
\qquad
\text{in } C(K),
\]
where $\cG$ is a centered real Gaussian process on $\C$ with covariance
\[
\E\bigl[\cG(\xi)\cG(\eta)\bigr] = \kappa(|\xi-\eta|).
\]
The limit covariance is the normalized overlap area of two unit
disks,
\[
\kappa(r) := \frac{|B(0,1)\cap B(r,1)|}{\pi},
\qquad
r\ge 0,
\]
where $B(z, \rho)$ is the disk of radius $\rho \ge 0$
centered at $z \in \C$. Equivalently,
\[
\kappa(r)=
\begin{cases}
\disp \frac{2}{\pi}\arccos \frac{r}{2}
-
\frac{r}{\pi}\sqrt{1-\frac{r^2}{4}},
&
0\le r\le 2,
\\[1ex]
0,
&
r\ge 2.
\end{cases}
\]
Moreover, for every $0<\alpha<\frac{1}{2}$, the family
$\{\cG_N^{(z_0)}\}_{N\ge 1}$ is tight in $C^{0,\alpha}(K)$, and $\cG$ admits a
version with sample paths in $C^{0,\alpha}(K)$.
\end{theorem}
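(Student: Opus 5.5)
Since $\cG_N^{(z_0)}$ is a centered quadratic functional of Gaussians, the natural route is the standard two-part argument: convergence of finite-dimensional distributions, then tightness in $C^{0,\alpha}(K)$. The key input is the cross-covariance of the fields $X_n$. Write $X_n(z)=\sum_k \zeta_k \phi_{n,k}(z)$. Summing over $k$ and using \eqref{KernelLandau} with the generating structure of the raising operators gives
\[
\E\bigl[X_n(z)\overline{X_m(w)}\bigr]=e^{-\frac{|z|^2+|w|^2}{2}+z\wbar}\,c_{n,m}(z-w),
\]
where $c_{n,m}(u)=\sqrt{m!/n!}\,(-\bar u)^{n-m}L_m^{(n-m)}(|u|^2)$ (up to sign conventions), i.e.\ the matrix elements of a Weyl (Heisenberg) translation in the Hermite basis. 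Hence $|\E[X_n(z)\overline{X_m(w)}]|^2=|c_{n,m}(z-w)|^2$. By Wick/Isserlis for complex Gaussians, $\Cov(|X_n(z)|^2,|X_m(w)|^2)=|c_{n,m}(z-w)|^2$. Therefore
\[
\E\bigl[\cG_N(\xi)\cG_N(\eta)\bigr]=\frac1N\sum_{n,m<N}\bigl|c_{n,m}\bigl(\sqrt N(\xi-\eta)\bigr)\bigr|^2 =\frac1N\Tr\bigl(P_N W_u P_N W_u^{*}\bigr),
\]
with $u=\sqrt N(\xi-\eta)$, $P_N$ the projection onto the first $N$ Hermite functions, and $W_u$ the Weyl operator. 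Semiclassically, $P_N$ is the quantization of the disk $B(0,\sqrt N)$ in phase space (area normalized so that it contains $N$ states), and $W_u P_N W_u^*$ quantizes the shifted disk. So this normalized trace converges to $|B(0,1)\cap B(\xi-\eta,1)|/\pi=\kappa(|\xi-\eta|)$, uniformly on compacts. I would prove this either via Plancherel–Rotach/Laguerre asymptotics for $\sum_{n,m<N}|c_{n,m}|^2$, or more cleanly through the Husimi/Wigner picture: $\frac1N\Tr(P_N W_uP_NW_u^*)=\frac1N\int \one\text{-type symbols}$ plus an error controlled by the $O(\sqrt N)$ boundary layer of the disk. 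This covariance limit is the step I expect to be the main obstacle, since one needs a quantitative rather than heuristic version of the semiclassical overlap formula.

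For the finite-dimensional distributions I would take real coefficients $a_j$ and points $\xi_j$ and consider $Y_N=\sum_j a_j\cG_N(\xi_j)$. This is a centered quadratic form $\langle \zeta, A_N\zeta\rangle-\E$ in the i.i.d.\ vector $\zeta$, with $A_N=N^{-1/2}\sum_j a_j\sum_{n<N}\phi_{n}(z_j)\phi_n(z_j)^*$. Its cumulants are $\kappa_p(Y_N)=(p-1)!\Tr(A_N^p)$. The variance converges by the previous step. For $p\ge3$, I would bound $|\Tr A_N^p|\le \|A_N\|_{op}^{p-2}\Tr A_N^2$. Since $A_N$ is a sum of boundedly many rank-$N$ projection-type pieces times $N^{-1/2}$, its eigenvalues are those of the Gram matrix of $\{X_n(z_j)\}$ scaled by $N^{-1/2}$. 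Each diagonal block is the identity by \eqref{eq:discrete-orthogonality}, so $\|A_N\|\lesssim N^{-1/2}$. Thus all higher cumulants vanish and $Y_N\to$ Gaussian, which by Cramér–Wold gives fdd convergence to $\cG$.

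For tightness in $C^{0,\alpha}(K)$, I would use Kolmogorov–Chentsov together with hypercontractivity. $\cG_N(\xi)-\cG_N(\eta)$ lies in the second Wiener chaos, so $\E|\cdot|^{2p}\le C_p(\E|\cdot|^2)^p$. The same trace formula gives $\E|\cG_N(\xi)-\cG_N(\eta)|^2=2\bigl(\tfrac1N\Tr P_N-\tfrac1N\Tr(P_NW_uP_NW_u^*)\bigr)$. I need the uniform bound $\lesssim |\xi-\eta|$ for all $N$ and all $|\xi-\eta|\le \mathrm{diam} K$. This says that the disk quantization loses at most $O(|u|\sqrt N)$ states under a shift $u$, a boundary-length estimate. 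I would obtain it from $\|(W_u-I)P_N\|_{HS}^2\lesssim |u|\sqrt N$ for $|u|\gtrsim 1$, using $\|(W_u-I)h_n\|^2\lesssim \min(1,|u|^2 n)$ and the fact that only the roughly $|u|\sqrt N$ top levels matter. At the sub-unit scale $|u|\le1$ one instead gets $\lesssim |u|^2$, which is even better. Then $\E|\Delta|^{2p}\lesssim|\xi-\eta|^{p}$, which gives Hölder exponents below $(p-2)/(2p)\to 1/2$ (the plane is two-dimensional). Tightness plus fdd convergence yields the theorem, and the same moment bound applied to $\kappa$ gives the $C^{0,\alpha}$ version of $\cG$, consistent with the white-noise representation in the abstract.
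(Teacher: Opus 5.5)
Your trace formula $\Cov(\cG_N(\xi),\cG_N(\eta))=N^{-1}\Tr(P_NW_uP_NW_u^*)$ with $u=\sqrt N(\xi-\eta)$ is correct, and so is your cumulant argument for the finite-dimensional distributions. The bound $|\Tr A_N^p|\le\|A_N\|^{p-2}\Tr A_N^2$ with $\|A_N\|\le N^{-1/2}\sum_j|a_j|$ does the same job as the paper's determinant expansion. However, neither of the two estimates that carry the content of the theorem is established.

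\textbf{The limit covariance.} The identification of the limit with $\kappa$ is only a semiclassical heuristic in your proposal. The paper proves it in four steps:
\begin{enumerate}
\item It writes the covariance as $N^{-1}\sum_{n<N}\P(M_{n,u}<N)$.
\item Using \eqref{eq:DND} and a Jacobi-operator moment computation, it shows that $(M_{n_N,u}-n_N-|u|^2)/N$ converges to the arcsine law on $[-2\sqrt t\,r,2\sqrt t\,r]$ when $n_N/N\to t$ and $r=|\xi-\eta|$ (Proposition~\ref{prop:arcsine-limit}).
\item It averages $q_r(t)=\P(A_{t,r}<1-t-r^2)$ over $t$ with the Ces\`aro Lemma~\ref{lem:riemann}.
\item It recognizes $\int_0^1q_r(t)\,dt=\P(\sqrt T e^{i\Theta}\in B(r,1))=\kappa(r)$, with $T,\Theta$ independent and uniform on $[0,1]$ and $[0,2\pi]$.
\end{enumerate}
Your Husimi route can also be completed, but you would have to supply the following estimates. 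Put $k_\zeta=D(\zeta)e_0$ and $\Pi_B=\pi^{-1}\int_B k_\zeta\otimes k_\zeta\,d\zeta$. Then:
\begin{enumerate}
\item $\Pi_{B(0,\sqrt N)}$ is diagonal with eigenvalues $\P(\mathrm{Gamma}(n+1,1)\le N)$, so $\|P_N-\Pi_{B(0,\sqrt N)}\|_{\mathrm{tr}}=\E|\mathrm{Poi}(N)-N|=O(\sqrt N)$.
\item $W_u\Pi_BW_u^*=\Pi_{B+u}$.
\item For disks of radius $\sqrt N$, $\Tr(\Pi_B\Pi_{B'})=\pi^{-2}\int_B\int_{B'}e^{-|\zeta-\zeta'|^2}\,d\zeta\,d\zeta'=\pi^{-1}|B\cap B'|+O(\sqrt N)$.
\end{enumerate}
Together these give $\kappa(|\xi-\eta|)+O(N^{-1/2})$, which is even a rate.

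\textbf{The increment bound.} More seriously, the bound on which your tightness argument rests is derived from a false inequality. For every fixed $u\neq0$,
\[
\|(W_u-I)e_n\|^2=2-2e^{-|u|^2/2}L_n(|u|^2)\to 2 \quad (n\to\infty),
\]
so $\|(W_u-I)P_N\|_{HS}^2=2N-o(N)$, not $O(|u|\sqrt N)$. Your own bound $\min(1,|u|^2n)$, summed over $n<N$, also gives only $O(N)$. The vector $W_ue_n$ is far from $e_n$ for almost all $n<N$. What is small is only its mass above level $N$, i.e.\ the leakage
\[
N-\Tr(P_NW_uP_NW_u^*)=\|(I-P_N)W_uP_N\|_{HS}^2=\sum_{n<N}\P(M_{n,u}\ge N).
\]
The claim that only the top $O(|u|\sqrt N)$ levels contribute needs a tail bound for the level distribution of $W_ue_n$. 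The paper gets it from Chebyshev, using $\E M_{n,u}=n+|u|^2$ and $\Var M_{n,u}=(2n+1)|u|^2$ (Lemma~\ref{lem:displacement-matrix2}). For $r<1$, the top $2\sqrt N|u|$ levels are bounded trivially. For $j>2\sqrt N|u|$ one has $\P(M_{N-j,u}\ge N)\le 12N|u|^2/j^2$, which sums to $O(\sqrt N|u|)$ uniformly in $N$ (Proposition~\ref{prop:l2-increment}).

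Your sub-unit remark does not close the gap either. For $N^{-1/2}\lesssim|u|\le 1$ the number of lost states is genuinely of order $|u|\sqrt N$. So a bound $\lesssim|u|^2$ is false for that number, and after dividing by $N$ it is weaker than the required $|u|/\sqrt N=|\xi-\eta|$. Once the correct $L^2$ increment bound is in place, your hypercontractivity and Kolmogorov--Chentsov steps go through as in the paper.
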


\begin{remark}
The limiting Gaussian process admits the Wiener-integral representation 
\[
 \cG(z) = \frac{1}{\sqrt{\pi}} \int_{\C} \1_{B(z,1)}(u) dW_{\R}(u). 
\]
where $W_{\R}$ denotes real white noise on $\C$. 
This covariance kernel also appears as the high-intensity
 Gaussian limit of local counting fluctuation fields for a planar
 Poisson point process. Indeed, if $\eta_\lambda$ is a
 homogeneous Poisson point process on $\R^2$ with intensity
 $\lambda/\pi$, and 
\[
Y_\lambda(z):=\frac{\eta_\lambda(B(z,1))- \lambda}{\sqrt{\lambda}},
\]
then for any $z,w\in \R^2$,
\[
\Cov\bigl(Y_\lambda(z),Y_\lambda(w)\bigr)
= \frac{|B(z,1)\cap B(w,1)|}{\pi} .
\]
Moreover, as $\lambda\to\infty$, the finite-dimensional
 distributions of $\{Y_\lambda(z)\}_{z\in\R^2}$ converge to
 those of a centered Gaussian field with this
 covariance. Thus, the kernel $\kappa(z,w)=|B(z,1)\cap B(w,1)|/\pi$
 describes the limiting overlap of local counting noise seen
 through unit disk windows. 
\end{remark}

\subsection{Remarks}
A striking feature of Theorem~\ref{Main} is 
jump from $g_{0,1}(z,z)=0 $ to $g_{1,2}(z,z) = 9/13$. The outstanding
repulsion in the $n=0$ case is due to the analyticity of $f_{0}(z)$, which
ensures that $f_{0}(z)$ and $f_{1}(z,\zbar)$\
cannot have a common zero. 
Indeed, if $f_0(z_0) = f_1(z_0, \zbar_0) =0$, then
$f_1(z_0, \zbar_0) = \partial_z f_0(z_0) -
\overline{z_0} f_0(z_0) = \partial_z f_0(z_0) =0$, so 
$z_0$ is a multiple zero of $f_0(z)$. This is 
impossible almost surely since $f_0(z)$ is entire and has no multiple
zeros almost surely. 
This also admits a geometric interpretation: zeros of 
$f_0$ correspond to local minima of $\log\big| e^{-|z|^2/2}
f_0(z) \big|$, while zeros of $f_1$ correspond to its non-minimal
critical points, namely, saddle points and local maxima. 
This explains heuristically the exceptional
strength of the repulsion in the case $n=0$. For $n \ge 1$,
such an argument is no longer available since $f_n$ is not
analytic. 
These heuristics justify the strong repulsion between zeros of
$f_{0}(z)$ and $f_{1}(z,\zbar)$. However, this
property is exclusive of the case $n=0$, since for $n\geq
1$, the Gaussian functions are not analytic (the maximum 
principle for polyanalytic functions \cite[Lemma 2.1]{Balk,PlanarSampling}
does not seem to assure that local minima are zeros).

The problem studied in this paper admits several natural variations, 
which may be worth considering elsewhere. A question not covered by our results is the
study of the correlations between pairs of zeros $z,w\in
\cZ_{f_{n}}$ 
of $f_{n}(z,\zbar)$ (see \cite{Baber} for the $n=1$ case), which are
likely to increase with $n$, due to (\ref{onepoint}). Further questions are
suggested by the conditional expectations of a zero process, given a zero of
another process \cite{Feng2}.

\subsection{Outline of the paper}

The paper is organized as follows. 
Section~\ref{sec:motivation} discusses the motivations. 
Section~\ref{sec:property} collects the basic properties 
of the sequence $\left\{ e^{-\frac{\left\vert z\right\vert
^{2}}{2}}f_{n}(z,\zbar)\right\}_{n=0}^{\infty }$ 
and introduces the notion and preliminary results used
later. 
Sections~\ref{sec:proof12} and \ref{sec:proof34} contain 
the proofs of the main results. 
Section~\ref{sec:FCLT} contains the proof of
Theorem~\ref{thm:functional-clt}. 
Section~\ref{sec:whitenoise} interprets the results in
terms of zeros of white noise spectrograms, and shows that this supports the
rationale behind the use of spectrogram averages in high-resolution
time-frequency analysis.

\section{Motivations of the work}\label{sec:motivation}

We outline our motivations in this section.

\begin{enumerate}
\item \emph{Interactions between Landau levels}. The correlations within a
single layer of electrons formed in \emph{the integer
      quantum Hall effect} 
\cite{Nobel} can be modeled by the determinantal processes
      associated with the kernel of the Landau levels
      \cite{HaiAron,abgrro17,HendHaimi,HaiHen2,SHIRAI,MakotoShirai,Deviations}. 
Theorem \ref{Main} sheds light on interaction patterns between random
superpositions of states in \emph{different} Landau levels.

\item \emph{Interactions between zeros and critical points
      of GEF's} \cite{Hanin,Feng2,Feng}. We observe that the
      operators $\nablazup$ and $\nablazdown$ coincide with the symmetric and 
non-symmetric parts of the Chern connection on $\C$
      \cite{Feng},
      $\nablaz = \nablazup + \nablazdown$. 
      Feng \cite{Feng}
      investigated short/long-range correlations between
      zeros and critical points of the GEF under the Chern
      connection (which, when acting on entire functions
      boils down to $\nablazup$). Our results
      for $n=0$ and $k=1$ recover the $1$-dimensional
      version of the universal case of \cite{Feng}.

\item \emph{White noise spectrograms}. 
The sequence $\{f_{n}(z,\zbar) \}_{n=0}^{\infty }$ has the
      same zeros as spectrograms with Hermite windows (see
      Section~\ref{sec:whitenoise} for definition of
      $\spec_{h_{n}}$) up to the natural phase-space
      identification and the scaling $z\mapsto \bar z/\sqrt{\pi}$. In \cite[Section 10.2]{Flbook},
      empirical and heuristic reasoning suggest that the
      allocations of energy patches of white noise
      spectrograms with Hermite functions have an
      intertwining pattern (supported by our main results),
      which also support the numerical evidence of high
      resolution time-frequency methods  
      \cite{XFlandrin,ConceFT,ImpConcEFT}.  

\item \emph{Wigner-Ville \cite{FranzH} time-varying spectral estimation}. In 
\cite{BB}, Bayram and Baraniuk suggested estimating a time-varying random
signal $\nu $ using the average 
\begin{equation}
S_{N}(\nu )(z)=\frac{1}{N}\sum_{n=0}^{N-1}\spec_{h_{n}}\nu
 (z), 
\label{BB_0}
\end{equation}
but, to the best of our knowledge, no mathematical proof is
      available showing that $S_{N}(\nu )$ approaches the
time-varying spectrum of $\nu $. Using the connections
      described in Section~\ref{sec:whitenoise},
      (\ref{average0})\ can be written as
      $S_{N}(z)=S_{N}(\cW)(z)$, where 
$\cW$ is white noise. By Theorem~\ref{thm:functionalLLN}, at least in the white
noise case $\nu =\cW$, the conjectured convergence 
$S_{N}(\cW)(z)\rightarrow 1$ ($1$ is the time-varying spectrum of
      $\cW$) almost surely in $C(K)$ for every compact $K\subset \C$.
\end{enumerate}

\section{Basic properties}\label{sec:property}

In this section, we compute the 
covariance kernel for $\{f_{n}(z,\zbar)\}_{n = 0}^{\infty}$, 
expand $f_{n}(z,\zbar)$ in the natural basis of the
Landau Level eigenspace (the complex Hermite polynomials defined below), and
clarify in what sense the family $\left\{ e^{-\frac{\left\vert
z\right\vert^{2}}{2}}f_{n}(z,\zbar)\right\}
_{n=0}^{\infty }$ is a sequence of 
orthonormal Gaussian functions. In the final subsection, 
we prove directly that, for every $n \ge 0$, the zeros of
$f_{n}(z,\zbar)$ are almost surely simple and isolated. 

\subsection{Covariance kernel}

Since the correlations we are going to investigate depend on $K_{n}(z,w)$,
the covariance kernels of $f_{n}(z,\zbar)$, we start this section by
computing $K_{n}(z,w)$. The covariance kernel of $f_{0}(z)$,
viewed as a Gaussian process, is well known to be given by $K_{0}(z,w)=e^{z\wbar}$, the
reproducing kernel of the lowest Landau level eigenspace (the Fock space of
entire functions). This observation extends to higher Landau Level
eigenspaces, since by definition of covariance kernel, 
\begin{align*}
K_{n}(z,w)&= 
\E[f_{n}(z,\zbar)\overline{f_{n}(w,\wbar)}]
=\frac{1}{n!}\E[(\nablazup)^{n}f_{0}(z)
 \overline{(\nablawup)^{n}f_{0}(w)}].  
\end{align*}
Using the kernel of $f_0$, we obtain 
\[
K_{n}(z,w) = 
\frac{1}{n!}(\nablazup)^{n}(\nablawbarup)^{n}K_{0}(z,w) 
= L_{n}(|z-w|^{2})e^{z\wbar}.  
\]
Thus $f_n(z,\zbar)$ is a Gaussian process on $\C$
whose covariance kernel coincides with 
the reproducing kernel of the $n$th Landau level
eigenspace (\ref{KernelLandau}). Note that $f_{n}(z,\zbar)$ is smooth
but not analytic unless $n=0$. It is actually polyanalytic, 
satisfying $\partial_{\zbar}^{n+1}f_{n}(z,\zbar)=0$ (the Landau
levels eigenspaces are also known as true/pure polyanalytic
Fock spaces \cite{VasiFock, Abr2010,AAZ,HendHaimi}). 
The same kernel also appears as the
correlation kernel of the corresponding polyanalytic
determinantal point processes in higher Landau levels
\cite{HaiAron,abgrro17,HendHaimi,SHIRAI,MakotoShirai,Deviations}.  

\subsection{Expansion in complex Hermite polynomials}

Consider the following sequence of Gaussian processes, 
\begin{equation*}
f_{n}(z,\zbar):=\frac{1}{\sqrt{n!}}(\nablazup)^{n}f_{0}(z), 
\end{equation*}
where $f_{0}(z)$ is the Gaussian Entire Function defined in (\ref{GEF}). 
The complex Hermite polynomials are defined as \cite{Ghanmi,IsmailCH}, 
\begin{equation}
H_{k,n}(z,\zbar)=\frac{(\nablazup)^{n}z^{k}}{\sqrt{n!}
\sqrt{k!}}=\left\{ 
\begin{tabular}{l}
${\sqrt{\frac{n!}{k!}}z^{k-n}L_{n}^{k-n}\left( \left\vert z\right\vert
^{2}\right) ,\qquad k>n\geq 0,}$ \\ 
$\left( -1\right) ^{n-k}{\sqrt{\frac{k!}{n!}}\zbar^{n-k}L_{k}^{n-k}
\left( \left\vert z\right\vert ^{2}\right) ,\qquad 0\leq
 k\leq n}$, 
\end{tabular}
\right. , 
\label{ComplexHermite}
\end{equation}
where the Laguerre polynomial $L_{n}^{\alpha }(x)$ is 
\begin{equation*}
L_{n}^{\alpha}(x) = 
\sum_{j=0}^{n}(-1)^{j} 
\binom{n+\alpha}{n-j}\frac{x^j}{j!},\qquad x\in \R, \quad
 n\geq 0, n+\alpha \geq 0. 
\end{equation*}
Combining (\ref{GEF}), (\ref{f_n}) and (\ref{ComplexHermite}) leads to the
expansion 
\begin{equation}
f_{n}(z,\zbar):=\frac{1}{\sqrt{n!}}(\nablazup)^{n}f_{0}(z) =
 \sum_{k=0}^{\infty }\zeta
 _{k}\frac{(\nablazup)^{n}z^{k}}{\sqrt{n!}\sqrt{k!}} 
= \sum_{k=0}^{\infty }\zeta_{k}H_{k,n}(z,\zbar).  
\label{FnHermite}
\end{equation}

\subsection{Orthonormality}\label{subsec:orthonormal}

From the doubly indexed orthogonality relation  
\cite{Ghanmi,IsmailCH}
\begin{equation*}
\int_{\C}H_{k,n}(z,\zbar)\overline{H_{k', n'}(z,\zbar)}e^{-\left\vert z\right\vert
 ^{2}}\frac{dz}{\pi }={\delta }_{kk'}{\delta}_{nn'}, 
\end{equation*}
one obtains a formal orthogonality relation between different
Landau-level indices after integration against
$e^{-|z|^2}dz/\pi$:  
\begin{equation*}
\int_{\C}f_{n}(z,\zbar)\overline{f_{n'}(z,\zbar)}
e^{-\left\vert z\right\vert ^{2}}\frac{dz}{\pi }= \text{ `}{
\infty \cdot \delta }
_{nn'}\text{'}.  
\end{equation*}
 This cannot be interpreted as samplewise
 $L^2$-orthonormality, because the diagonal integral diverges
 almost surely. 
As in Remark~\ref{rem:unitary} below, it is easily seen that 
the complex Hermite polynomials also enjoy a discrete orthogonality
relation with respect to the Landau Level index:
\begin{equation*}
\sum_{k=0}^{\infty }H_{n,k}(z,\bar{z})\overline{H_{n',k}(z,\bar{z})} 
=e^{|z|^2} \delta_{nn'}.  
\end{equation*}
Using this discrete orthogonality, we see that 
\begin{equation}
\E\left[ e^{-\left\vert z\right\vert^{2}}f_{n}(z,\bar{z}) 
\overline{f_{n'}(z,\bar{z})}\right] = \delta_{nn'}. 
\label{eq:discrete-orthogonality}
\end{equation}
Thus, for fixed $z \in \C$, the family of Gaussian functions
$\left\{ e^{-\frac{|z|^{2}}{2}} 
f_{n}(z,\zbar)\right\}_{n=0}^{\infty}$ is
orthonormal in $L^2(\P)$. 
In Subsection~\ref{subsec:pointwise}, we will see the
covariance structure for $z,w \in \C$.  

\begin{lemma}\label{lem:displacement-matrix}
For $z \in \C$, define
\[
U(z)_{m,n}:=e^{-|z|^2/2}H_{m,n}(z,\zbar),
\qquad m,n\in\mathbb N_0.
\]
Then $U(z)=(U(z)_{m,n})_{m,n\ge 0}$ defines a unitary operator on
$\ell^2(\mathbb N_0)$.
\end{lemma}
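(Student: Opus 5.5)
The plan is to identify $U(z)$ as the matrix of the Weyl displacement operator $D(z)=e^{za^\dagger-\zbar a}$ (up to a conjugation or transpose convention) in the number basis $\{e_n\}_{n\ge0}$ of $\ell^2(\N_0)$. Here $a e_n=\sqrt n\,e_{n-1}$ and $a^\dagger e_n=\sqrt{n+1}\,e_{n+1}$. Unitarity of $D(z)$ then settles the lemma. To keep the argument self-contained, I would also verify unitarity directly through a generating-function computation of $U(z)U(z)^*$ and $U(z)^*U(z)$.

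\textbf{Step 1: generating function.} Since $\nablazup=\partial_z-\zbar$ and $e^{-t\zbar}$ commutes with $\partial_z$, we have
\[
\sum_{n\ge0}\frac{t^n}{n!}(\nablazup)^n z^k=e^{-t\zbar}(z+t)^k .
\]
Summing over $k$ with weight $s^k/k!$ gives
\[
\sum_{k,n}H_{k,n}(z,\zbar)\frac{s^k t^n}{\sqrt{k!\,n!}}=e^{sz+st-t\zbar}.
\]

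\textbf{Step 2: row and column orthonormality.} Let $u,v$ be auxiliary variables. Using Step~1 twice, with conjugated variables $\bar u,\bar v$ in the second factor, compute
\[
\sum_{k}\sum_{n,n'}H_{k,n}\overline{H_{k,n'}}\frac{t^n\bar v^{n'}}{\sqrt{n!\,n'!}}
\]
by summing first over $k$ via $\sum_k \frac{(s\bar u)^k}{k!}$-type orthogonality of the monomials $s^k/\sqrt{k!}$. Concretely, write
\[
\sum_n H_{k,n}\frac{t^n}{\sqrt{n!}}=\frac{e^{-t\zbar}(z+t)^k}{\sqrt{k!}}
\]
and sum over $k$ to obtain $e^{-t\zbar-\bar v z}e^{(z+t)(\zbar+\bar v)}=e^{|z|^2}e^{t\bar v}$. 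Comparing coefficients of $t^n\bar v^{n'}/\sqrt{n!\,n'!}$ yields
\[
\sum_k H_{k,n}\overline{H_{k,n'}}=e^{|z|^2}\delta_{nn'} .
\]
This is exactly the discrete orthogonality quoted before the lemma. Symmetrically, summing over $n$ for fixed $k,k'$ uses
\[
\sum_k H_{k,n}\frac{s^k}{\sqrt{k!}}=\frac{e^{sz}(s-\zbar)^n}{\sqrt{n!}},
\]
which follows from Step~1 by extracting the coefficient of $t^n$. Summing over $n$ gives $e^{sz+\bar u\zbar}e^{(s-\zbar)(\bar u-z)}=e^{|z|^2}e^{s\bar u}$, hence $\sum_n H_{k,n}\overline{H_{k',n}}=e^{|z|^2}\delta_{kk'}$. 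Multiplying by $e^{-|z|^2}$, the columns and the rows of $U(z)$ are orthonormal families in $\ell^2(\N_0)$.

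\textbf{Step 3: from orthonormal rows and columns to unitarity.} Orthonormal columns mean that $U(z)$ maps $e_n$ to orthonormal vectors, so $U(z)$ extends to an isometry $V$ on $\ell^2$. Row orthonormality, together with each row lying in $\ell^2$ (which is the case $k=k'$), shows that $U(z)^T$, or equivalently the adjoint matrix, is also an isometry $W$. One checks that $W=V^*$ by comparing matrix entries on basis vectors. Then $V^*V=I$ and $VV^*=I$, so $V$ is unitary.

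\textbf{Main obstacle.} The main point of care is justifying the interchange of summations and the coefficient comparison. Every series involved is absolutely convergent: for fixed $z$, Step~1 shows the double generating series is entire in $(s,t)$, and Cauchy--Schwarz bounds the products. I would handle this by working with fixed real parameters and dominated convergence. The remaining risk is an index or conjugation slip in the transposition between $H_{m,n}$ and $H_{n,m}$. This is harmless, because both row and column orthonormality are proved.
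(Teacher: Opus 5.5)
Your proof is correct, but it takes a different route from the paper's.

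\textbf{The paper's route.} The paper's proof is essentially one line. It quotes Fujii's formula $\langle e_m, D(z)e_n\rangle = e^{-|z|^2/2}H_{m,n}(z,\zbar)$ for the matrix coefficients of the Weyl operator $D(z)=\exp(za^*-\zbar a)$, and unitarity of $U(z)$ is inherited from $D(z)$.

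\textbf{Your route.} You state the identification with $D(z)$ but never establish or use it. What you actually prove is direct. You derive the generating function $e^{sz+st-t\zbar}$ from the definition $H_{k,n}=(\nablazup)^n z^k/\sqrt{n!\,k!}$. From it you read off both column orthonormality, which is \eqref{eq:discrete-orthogonality}, and row orthonormality. You then close correctly by noting that column orthonormality alone only gives an isometry, so you also need the adjoint matrix to be an isometry.

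For the interchanges of summation, \emph{Cauchy--Schwarz} is not really the right tool. The cleanest justification is the majorant: $|H_{k,n}|/\sqrt{k!\,n!}$ is bounded by the coefficient of $s^kt^n$ in $e^{(s+t)|z|+st}$. This makes every series you use absolutely convergent.

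\textbf{What each approach buys.} Your argument is self-contained and does not depend on an external formula. It even proves the discrete orthogonality relation that the paper takes from the literature.

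The paper's route buys the operator structure itself. The identification of $U(z)$ with the matrix of $D(z)$ is used afterwards:
\begin{itemize}
\item the intertwining relations in Lemma~\ref{lem:displacement-matrix2};
\item the Weyl relations in Lemma~\ref{lem:translation-invariance};
\item the spectral interpretation in Proposition~\ref{prop:arcsine-limit}.
\end{itemize}
Unitarity alone does not supply these.

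\textbf{Recovering the identification.} Your computation yields the identification almost for free. Write $D(z)=e^{-|z|^2/2}e^{za^*}e^{-\zbar a}$ and $e^{ta^*}e_0=\sum_n t^n e_n/\sqrt{n!}$, and use $a\,e^{ta^*}e_0=t\,e^{ta^*}e_0$. Then the $k$-th coordinate of $D(z)e^{ta^*}e_0$ is $e^{-|z|^2/2}e^{-t\zbar}(z+t)^k/\sqrt{k!}$. This is exactly $e^{-|z|^2/2}$ times your identity for $\sum_n H_{k,n}t^n/\sqrt{n!}$. Comparing coefficients of $t^n$ recovers Fujii's formula.
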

\begin{proof}
Let $\{e_n\}_{n\ge0}$ be the canonical orthonormal basis of $\ell^2(\N_0)$, and
let $a$ and $a^*$ be the annihilation and creation operators,
\begin{equation}
a e_0=0,\qquad a e_n=\sqrt n \, e_{n-1}\quad (n\ge1),
\qquad
a^* e_n=\sqrt{n+1} \, e_{n+1}.
\label{eq:creation-annihilation} 
\end{equation}
For $z \in \C$, let
$D(z):=\exp \bigl(z a^*-\zbar a\bigr)$ 
be the Weyl displacement operator. It is standard that $D(z)$ is unitary and that,
in the Fock basis $\{e_n\}_{n \in \N_0}$, its matrix coefficients are given by
\begin{equation*}
\langle e_m, D(z)e_n\rangle
=
e^{-|z|^2/2}H_{m,n}(z,\zbar),
\qquad m,n \in \N_0;
\end{equation*}
see, for instance, Fujii \cite[Eq.~(27), (28)]{Fujii} and 
\eqref{ComplexHermite}. 
Hence $U(z)$ is precisely the matrix representation of $D(z)$, and is therefore unitary.
\end{proof}

\begin{remark}\label{rem:unitary}
Since $U(z)$ is unitary, its columns are orthonormal. Therefore
\[
\sum_{m\ge 0} U(z)_{m,n'}\overline{U(z)_{m,n}}=\delta_{nn'}, 
\]
which is equivalent to \eqref{eq:discrete-orthogonality}.  
\end{remark}

From Lemma~\ref{lem:displacement-matrix}, 
for $n \in \N_0$ and $z \in \C$, one can define an $\N_0$-valued
 random variable $M_{n,z}$ such that 
\begin{equation*}
\P(M_{n,z}=m):=|U(z)_{m,n}|^2 = |\langle e_m, D(z)e_n \rangle|^2,
\qquad m \in \N_0. 
\label{eq:defMnz} 
\end{equation*}

\begin{remark}\label{rem:spectral-measure-Mnz}
We recall the following interpretation of the law of $M_{n,z}$.
Let $\num:=a^*a$ be the number operator on $\ell^2(\N_0)$.
Since the number operator $\mathcal N$ is diagonal in the
basis $\{e_m\}_{m\ge 0}$ with $\mathcal N e_m=me_m$, its spectral projection
onto a Borel set $B\subset \mathbb R$ is simply
\[
E_{\mathcal N}(B)=\sum_{m\in B\cap \mathbb N_0} e_m \otimes
e_m,  
\]
where $e_m \otimes e_m$ is the orthogonal projection onto
the space $\C e_m$. 
Thus, if the system is in the state $\psi$, the spectral measure of
$\mathcal N$ associated with $\psi$ assigns to $B$ the
 probability 
\[
\mu_{\psi}(B) := \langle E_{\mathcal N}(B)\psi,\psi\rangle
=
\sum_{m\in B\cap \mathbb N_0}|\langle e_m,\psi\rangle|^2.
\]
In particular, for $\psi=D(z)e_n$ this gives
\[
\mu_{D(z)e_n}(B)
=
\sum_{m\in B\cap \mathbb N_0}
|\langle e_m,D(z)e_n\rangle|^2
=
\mathbb P(M_{n,z}\in B).
\]
Hence the law of $M_{n,z}$ is exactly the scalar spectral measure of
$\mathcal N$ associated with the vector $D(z)e_n$.
In probabilistic terms, the spectral projections of $\num$ play the role of
the events specifying the possible values of the random
 variable. 
\end{remark}

The following elementary consequence of the functional calculus allows
us to transfer this interpretation to functions of $M_{n,z}$, in particular
to its centered and rescaled versions. 
\begin{lemma}\label{lem:functional_calculus}
Let $T$ be a self-adjoint operator, $U$ a unitary
 operator, and $h : \R \to \R$ a Borel measurable function. 
We denote by $\mu_{\psi}^T$ the spectral measure of a
 self-adjoint operator $T$ in the state $\psi$. 
If $X \sim \mu_{\psi}^T$, then $h(X) \sim
 \mu_{\psi}^{h(T)}$. Moreover, $\mu_{U\psi}^T = \mu_{\psi}^{U^*TU}$. 
\end{lemma}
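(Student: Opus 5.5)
We prove the two assertions of Lemma~\ref{lem:functional_calculus} separately, using only the spectral theorem and the defining property of the scalar spectral measure. For a self-adjoint $T$ with projection-valued measure $E_T$ and a unit vector $\psi$, the spectral measure is $\mu_\psi^T(B)=\langle E_T(B)\psi,\psi\rangle$. It is characterized by
\[
\langle g(T)\psi,\psi\rangle=\int_{\R} g(\lambda)\,\mu_\psi^T(d\lambda)
\]
for all bounded Borel $g$. It suffices to take indicator functions $g=\one_B$.

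\textbf{First assertion.} Since $h$ is real-valued, $h(T)$ is self-adjoint. Its spectral measure is the push-forward
\[
E_{h(T)}(B)=E_T(h^{-1}(B))
\]
for Borel $B\subset\R$; this is the standard composition rule of the Borel functional calculus, $\one_B(h(T))=(\one_B\circ h)(T)=\one_{h^{-1}(B)}(T)$. Taking the expectation in the state $\psi$ gives
\[
\mu_\psi^{h(T)}(B)=\mu_\psi^T(h^{-1}(B))=\P(X\in h^{-1}(B))=\P(h(X)\in B),
\]
which is exactly $h(X)\sim\mu_\psi^{h(T)}$.

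\textbf{Second assertion.} For a unitary $U$, the operator $U^*TU$ is self-adjoint with domain $U^*\mathrm{Dom}(T)$. Its projection-valued measure is $U^*E_T(\cdot)U$. To check this, note that $B\mapsto U^*E_T(B)U$ is again a projection-valued measure, and
\[
\int\lambda\, d\bigl(U^*E_TU\bigr)=U^*\Bigl(\int\lambda\, dE_T\Bigr)U=U^*TU.
\]
By uniqueness in the spectral theorem, it is the spectral resolution of $U^*TU$. Then
\[
\mu_\psi^{U^*TU}(B)=\langle U^*E_T(B)U\psi,\psi\rangle=\langle E_T(B)U\psi,U\psi\rangle=\mu_{U\psi}^T(B).
\]

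The only step needing care is the identification of the spectral measures of $h(T)$ and $U^*TU$ for possibly unbounded $T$, such as the number operator $\num$. Both identifications follow directly from uniqueness of the projection-valued measure in the spectral theorem. In the application, $T$ is diagonal in the basis $\{e_m\}$, so both identities can also be checked by hand on that basis.
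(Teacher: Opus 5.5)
Your proof is correct and follows the route the paper intends: the paper gives no proof, calling the lemma an ``elementary consequence of the functional calculus,'' and your two steps supply exactly those details, namely the push-forward rule $E_{h(T)}(B)=E_T(h^{-1}(B))$ and the identification of $U^*E_T(\cdot)U$ as the spectral resolution of $U^*TU$ (with domain $U^*\mathrm{Dom}(T)$) by uniqueness. One small correction to your closing remark: in the paper's application the lemma is applied to $D(z)^*\num D(z)$, which is diagonal in the basis $\{D(z)^*e_m\}$ rather than $\{e_m\}$, so any by-hand check should be done in that conjugated basis.
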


We will use the following fact in later sections. 

\begin{lemma}\label{lem:displacement-matrix2}
For $n\in \N_0$ and $z \in \C$, 
\[
\E[M_{n,z}]=n+|z|^2, \quad \Var(M_{n,z})=(2n+1)|z|^2. 
\]
\end{lemma}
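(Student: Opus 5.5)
Since $\P(M_{n,z}=m)=|\langle e_m,D(z)e_n\rangle|^2$, Remark~\ref{rem:spectral-measure-Mnz} says that the law of $M_{n,z}$ is the spectral measure of the number operator $\num=a^*a$ in the state $D(z)e_n$. I will therefore compute the two moments as expectation values:
\[
\E[M_{n,z}]=\langle D(z)e_n,\num D(z)e_n\rangle,\qquad
\E[M_{n,z}^2]=\langle D(z)e_n,\num^2 D(z)e_n\rangle.
\]
Lemma~\ref{lem:functional_calculus} (with $h(x)=x^2$ for the second moment) justifies both identities, and it also lets me move the displacement onto the operator. This gives $\E[M_{n,z}^k]=\langle e_n,(D(z)^*\num D(z))^k e_n\rangle$ for $k=1,2$.

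The key algebraic input is the standard displacement identity $D(z)^*aD(z)=a+z$, and consequently $D(z)^*a^*D(z)=a^*+\zbar$. It follows from the Baker--Campbell--Hausdorff formula, since $[za^*-\zbar a,a]=-z$ is a scalar; equivalently, $t\mapsto D(tz)^*aD(tz)$ has constant derivative $z$. Conjugating $\num$ then gives
\[
D(z)^*\num D(z)=(a^*+\zbar)(a+z)=\num+z a^*+\zbar a+|z|^2 .
\]
The sign conventions must match $D(z)=\exp(za^*-\zbar a)$. They only affect which of $z,\zbar$ sits next to $a^*$, and the final answer depends only on $|z|^2$, so a slip there is harmless.

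For the mean, take the $e_n$-diagonal entry. We have $\langle e_n,\num e_n\rangle=n$, and $a e_n$, $a^*e_n$ are orthogonal to $e_n$. Hence $\E[M_{n,z}]=n+|z|^2$.

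For the variance, write $A:=D(z)^*\num D(z)-(n+|z|^2)$. Then $A e_n=z a^*e_n+\zbar a e_n=z\sqrt{n+1}\,e_{n+1}+\zbar\sqrt n\,e_{n-1}$. Since $A$ is self-adjoint,
\[
\Var(M_{n,z})=\langle e_n,A^2e_n\rangle=\|Ae_n\|^2=|z|^2(n+1)+|z|^2n=(2n+1)|z|^2 .
\]
The norm is computed using the orthonormality of $e_{n\pm1}$, with the $e_{-1}$ term absent when $n=0$, consistent with $\sqrt{0}=0$.

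The only step that needs care is the conjugation identity, together with the use of Lemma~\ref{lem:functional_calculus} for the unbounded operator $\num$. On the latter point, $D(z)e_n$ has Gaussian-decaying coefficients $e^{-|z|^2/2}H_{m,n}$ in $m$, so it lies in the domain of every power of $\num$ and all the expressions above are finite. As a cross-check, for $n=0$ this recovers the Poisson law of mean $|z|^2$, since $|U(z)_{m,0}|^2=e^{-|z|^2}|z|^{2m}/m!$.
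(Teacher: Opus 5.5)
Your proof is correct and follows the paper's argument. Both identify the law of $M_{n,z}$ as the spectral measure of $\num$ in the state $D(z)e_n$, use the intertwining relations $D(z)^*aD(z)=a+z$ and $D(z)^*a^*D(z)=a^*+\zbar$, and derive $D(z)^*\num D(z)=\num+za^*+\zbar a+|z|^2$. The only difference is cosmetic: you get the variance directly as $\|(za^*+\zbar a)e_n\|^2$, whereas the paper expands $(D(z)^*\num D(z))^2$ and discards the terms with zero diagonal entry. These are the same computation, since $za^*+\zbar a$ is self-adjoint.
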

\begin{proof}
Since $D(z)$ is unitary, by
 Remark~\ref{rem:spectral-measure-Mnz}, the moments of
 $M_{n,z}$ are given by 
\[
\mathbb E[M_{n,z}^k]
=
\langle D(z)e_n, \num^k D(z)e_n\rangle 
=
\langle e_n, \big( D(z)^* \num D(z)\big)^k e_n\rangle \qquad
 (k \in \N). 
\]
Using the standard intertwining relations
\[
D(z)^*aD(z)=a+z,
\qquad
D(z)^*a^*D(z)=a^*+\zbar,
\]
by writing $T:=za^* + \zbar a$, we obtain
\begin{align}
D(z)^* \num D(z)
&=
(a^*+\zbar)(a+z)
=
\num +za^*+ \zbar a+|z|^2 = \num + T+|z|^2, \label{eq:DND} \\
\big( D(z)^* \num D(z) \big)^2
&=
\num^2+T^2+|z|^4+2|z|^2 \num 
+\num T+T \num +2|z|^2T 
\nonumber.
\end{align}
Since $T$ changes the level by $\pm 1$, we have 
$\langle e_n,\num Te_n\rangle
= \langle e_n,T \num e_n\rangle
= \langle e_n,T e_n\rangle
= 0$, 
and it follows that
\begin{align*}
\mathbb E[M_{n,z}]
&= \langle e_n, D(z)^* \num D(z) e_n\rangle
= \langle e_n, \num e_n\rangle+|z|^2
=
n+|z|^2, \\
\E[M_{n,z}^2]
&=
\langle e_n, \bigl(D(z)^* \num D(z)\bigr)^2e_n\rangle
=
n^2+2|z|^2n+|z|^4+\langle e_n,T^2 e_n\rangle 
=
\E[M_{n,z}]^2 + \langle e_n,T^2 e_n\rangle.
\end{align*}
Since $T^2=z^2(a^*)^2 + \zbar^2 a^2+|z|^2(a^*a+aa^*)$,
 we obtain 
\[
\langle e_n,T^2e_n\rangle
=
|z|^2\bigl(\langle e_n,a^*ae_n\rangle+\langle e_n,aa^*e_n\rangle\bigr)
=
(2n+1)|z|^2.
\]
Thus, $\Var(M_{n,z})=(2n+1)|z|^2$. This completes the proof.
\end{proof}

We note that the process $\bigl(X_n(z)\bigr)_{n\ge 0}$ is covariant
under translations up to a 
deterministic phase factor reflecting the underlying complex
structure. 

\begin{lemma}\label{lem:translation-invariance}
For every $z_0 \in \C$, 
\begin{equation*}
\bigl(X_n(z+z_0)\bigr)_{n\ge 0}
\inlaw 
\bigl(\lambda(z_0,z) X_n(z)\bigr)_{n\ge 0}, 
\end{equation*}
where 
$\lambda(z_0,z) = e^{- i \Im (z_0\bar z)}$. 
\end{lemma}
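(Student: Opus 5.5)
Since all the $X_n$ are driven by the same i.i.d.\ sequence $\{\zeta_k\}$ and are centered complex Gaussian processes, the plan is to compare covariances. Both sides are jointly circularly-symmetric complex Gaussian families, because $\lambda(z_0,z)$ is deterministic. It therefore suffices to show
\[
\E\bigl[X_n(z+z_0)\overline{X_{n'}(w+z_0)}\bigr]
=\lambda(z_0,z)\overline{\lambda(z_0,w)}\,\E\bigl[X_n(z)\overline{X_{n'}(w)}\bigr]
\]
for all $n,n'\ge 0$ and $z,w\in\C$. The pseudo-covariances $\E[X_n X_{n'}]$ vanish on both sides, since $\E[\zeta_k\zeta_j]=0$.

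The cross covariance comes from the expansion \eqref{FnHermite} and Lemma~\ref{lem:displacement-matrix}:
\[
\E\bigl[X_n(z)\overline{X_{n'}(w)}\bigr]=\sum_{k\ge0}U(z)_{k,n}\overline{U(w)_{k,n'}}=\bigl(U(w)^*U(z)\bigr)_{n',n}=\langle e_{n'},D(w)^*D(z)e_n\rangle .
\]
Here I use that $U(z)$ is the matrix of the displacement operator $D(z)$. The Weyl relation $D(\alpha)D(\beta)=e^{i\Im(\alpha\bar\beta)}D(\alpha+\beta)$ then gives
\[
D(w)^*D(z)=D(-w)D(z)=e^{-i\Im(w\bar z)}D(z-w),
\]
so that
\[
\E\bigl[X_n(z)\overline{X_{n'}(w)}\bigr]=e^{i\Im(z\bar w)}\langle e_{n'},D(z-w)e_n\rangle .
\]
Replacing $(z,w)$ by $(z+z_0,w+z_0)$ leaves $D(z-w)$ unchanged and changes only the phase, to $e^{i\Im((z+z_0)\overline{(w+z_0)})}$. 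Expanding,
\[
\Im\bigl((z+z_0)\overline{(w+z_0)}\bigr)=\Im(z\bar w)+\Im(z_0\bar w)-\Im(z_0\bar z).
\]
The extra factor is therefore $e^{-i\Im(z_0\bar z)}e^{i\Im(z_0\bar w)}=\lambda(z_0,z)\overline{\lambda(z_0,w)}$, which is the required identity.

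The main obstacle is the sign convention in the Weyl relation. I will verify $D(\alpha)D(\beta)=e^{(\alpha\bar\beta-\bar\alpha\beta)/2}D(\alpha+\beta)$ via Baker--Campbell--Hausdorff with $[a,a^*]=1$. I will also check directly, for $n=n'=0$, that $e^{-(|z|^2+|w|^2)/2}e^{z\bar w}$ matches $e^{i\Im(z\bar w)}e^{-|z-w|^2/2}$; this pins down the sign. If the conventions differ, the same computation works without operators, using the kernel $K_{n,n'}(z,w)=\E[f_n\overline{f_{n'}}]$, which equals $e^{z\bar w}$ times a function of $z-w$ by applying $(\nablazup)^n(\nablawbarup)^{n'}$ to $e^{z\bar w}$. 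After that, equality of covariances of centered complex Gaussian families gives equality of all finite-dimensional distributions, hence equality in law of the processes.
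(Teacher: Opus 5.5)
Your proof is correct, but it reaches the lemma by a different mechanism from the paper's.

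The paper argues at the level of the driving noise. It writes $(X_n(z))_{n\ge0}=\zeta U(z)$ and uses the Weyl relation in the form $U(z_0+z)=\lambda(z_0,z)\,U(z_0)U(z)$. It then absorbs $U(z_0)$ into the noise via $\widetilde\zeta:=\zeta U(z_0)\inlaw\zeta$, which holds by unitary invariance of the i.i.d.\ complex Gaussian sequence. This is a coupling: after relabelling the noise, the two families coincide.

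You instead use the Weyl relation to compute the two-point function $\E[X_n(z)\overline{X_{n'}(w)}]=e^{i\Im(z\wbar)}\langle e_{n'},D(z-w)e_n\rangle$. You then check that simultaneous translation multiplies it by exactly $\lambda(z_0,z)\overline{\lambda(z_0,w)}$. The conclusion follows because centered circularly-symmetric Gaussian families are determined by their covariances. Your phase bookkeeping is all right: the sign in $D(\alpha)D(\beta)=e^{i\Im(\alpha\bar\beta)}D(\alpha+\beta)$, the $n=n'=0$ sanity check, and the expansion of $\Im((z+z_0)\overline{(w+z_0)})$.

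What each route buys:
\begin{itemize}
\item The paper's route is shorter and gives an explicit coupling. However, it manipulates the formal product $\zeta U(z_0)$ with $\zeta\notin\ell^2$, which must be read componentwise as $L^2(\P)$-limits.
\item Yours involves only absolutely convergent sums of $\ell^2$ columns and finite-dimensional covariance checks.
\item Both give the identity jointly in $(n,z)$; in the paper this is because $\widetilde\zeta$ does not depend on $z$. This joint version is what is needed later for $\cG_N^{(z_0)}\inlaw\cG_N^{(0)}$.
\item Yours also exhibits the kernel in magnetic-translation form. Its squared modulus $|U(z-w)_{n',n}|^2$ is precisely the weight driving Proposition~\ref{prop:covariance-limit}.
\end{itemize}
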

\begin{proof}
Since $U(z)_{m,n} = e^{-|z|^2/2} H_{m,n}(z,\bar z)$ by
 Lemma~\ref{lem:displacement-matrix},
 $X_n(z)=\sum_{k=0}^{\infty} \zeta_k U(z)_{k,n}$. 
If we regard $\zeta=(\zeta_k)_{k\ge 0}$ as a row vector,
 then 
$\bigl(X_n(z)\bigr)_{n\ge 0} = \zeta U(z)$. 
The identities are understood componentwise, or
 equivalently at the level of finite-dimensional
 distributions. 
The Weyl relations give
\[
D(z_0+z)
=
e^{- \frac{1}{2}(z_0\bar z-\bar z_0 z)} D(z_0)D(z),
\]
hence
\[
U(z_0+z)
=
\lambda(z_0,z)\, U(z_0)U(z),
\qquad
|\lambda(z_0,z)|=1.
\]
Since $U(z_0)$ is unitary, $\widetilde{\zeta}:=\zeta U(z_0) \inlaw \zeta$,
 and hence
\[
\bigl(X_n(z_0+z)\bigr)_{n\ge 0} 
=
\lambda(z_0,z)\, \zeta U(z_0)U(z) 
\inlaw 
\lambda(z_0,z)\, \widetilde{\zeta} U(z)  
\inlaw 
\lambda(z_0,z)\, \bigl(X_n(z)\bigr)_{n\ge 0}. 
\]
This completes the proof. 
\end{proof}

\subsection{Zeros are simple and isolated}\label{subsec:isolated}

For $n=0$, the zeros of 
$f_0(z)$ are isolated, since any
realization of (\ref{GEF}) is an entire function and, as such, it cannot
vanish on an accumulation point without vanishing everywhere
in $\C$. However, for $n\geq 1$, a general function of the form (\ref{poly}) may
have both isolated and non-isolated zeros 
(for example, $z-z^{2}\zbar$ vanishes on a
non-discrete set); see \cite{PolyanalyticZeros} and the references
therein for research about zeros of polyanalytic functions). Nevertheless,
we can show that the zeros of $f_{n}(z,\zbar)$ are
almost surely simple.

Let $U\subset\C\simeq\R^2$ be open and let $g:U\to\C$
be a $C^1$ function. A zero $z_0\in U$ of $g$ is called \emph{simple} (or 
\emph{regular}) if the Jacobian matrix of the map
$G: U\to\R^2$, defined by $G(z) = (\Re g(z),\Im g(z))$, is non-singular at $z_0$, i.e. 
\begin{equation*}
g(z_0)=0\quad\text{and}\quad \det DG(z_0)\neq 0.
\end{equation*}
In this case, $z_0$ is an isolated zero of $g$ by the inverse function
theorem. If $g$ is holomorphic, then this notion agrees with the usual one: $
z_0$ is a simple zero if and only if $g^{\prime }(z_0)\neq 0$.

In our setting, the weak Bulinskaya lemma in
\cite[Proposition~2.2]{AAL25} with $D=d=2$ can be stated as
follows. 

\begin{lemma}\label{lem:bulinskaya}
Let $U\subset \R^2$ be an open set, and let $F:U\to \R^2$
be a random field with almost surely $C^1$ sample paths. Fix $v\in \R^2$.
Assume that for every relatively compact open set $O \subset U$, there exist a
neighborhood $V$ of $v$ and a constant $C_O<\infty$ such that, for every
$y\in V$, 
\begin{equation*}
\int_O p_{F(x)}(y)\,dx \le C_O,
\label{eq:estimate_heat_kernel}
\end{equation*}
where $p_{F(x)}$ denotes the density of $F(x)$ on $\R^2$.
Then
\[
\P\bigl(\exists x\in O:\ F(x)=v,\ \det DF(x)=0\bigr)=0.
\]
In other words, with probability one, the level $v$ is not a
 critical value of $F$. 
\end{lemma}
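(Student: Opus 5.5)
The plan is a Bulinskaya-type ``occupation measure'' argument: bound the expected Lebesgue measure of the set of points where $F$ is close to $v$ using the density hypothesis, and show that a critical point at level $v$ forces this set to be unusually large. Fix a relatively compact open $O\subset U$ and choose a relatively compact open $O'$ with $\overline O\subset O'\subset U$. Apply the hypothesis to $O'$; we only need it at $y$ in a small ball $B(v,\varepsilon_0)\subset V$. For $\varepsilon<\varepsilon_0$, Fubini gives
\[
\E\Bigl[\bigl|\{x\in O':|F(x)-v|\le\varepsilon\}\bigr|\Bigr]
=\int_{B(v,\varepsilon)}\int_{O'}p_{F(x)}(y)\,dx\,dy\le C_{O'}\,\pi\varepsilon^2 .
\]
This is the only place the density hypothesis enters.

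Next we localize on a good event. Since the sample paths are a.s.\ $C^1$ on $\overline{O'}$, the derivative $DF$ is a.s.\ bounded and uniformly continuous there. For $L\in\N$ and a fixed deterministic modulus $\omega$ (with $\omega(\delta)\to0$), let $G_{L,\omega}$ be the event that $\sup_{\overline{O'}}\|DF\|\le L$ and $\|DF(x)-DF(x')\|\le\omega(|x-x'|)$ on $\overline{O'}$. Countably many such pairs $(L,\omega)$ exhaust the sample space up to a null set; for instance, take $\omega_j(\delta)=1/j$ for $\delta\le\delta_{j,m}$, ranging over a countable family. So it suffices to prove $\P(E\cap G_{L,\omega})=0$, where $E$ is the event in the statement.

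The key step is deterministic. On $E\cap G_{L,\omega}$, let $x^*\in O$ satisfy $F(x^*)=v$ and $\det DF(x^*)=0$, and take $\delta$ small enough that $B(x^*,\delta)\subset O'$. Taylor's formula gives, for $x\in B(x^*,\delta)$,
\[
|F(x)-v-DF(x^*)(x-x^*)|\le\omega(\delta)\,\delta .
\]
The matrix $A=DF(x^*)$ has rank at most $1$ and norm at most $L$. Hence $\{h:|h|<\delta,\ |Ah|\le\varepsilon/2\}$ contains a strip of width $\gtrsim\min(\delta,\varepsilon/L)$ through the origin, and its area is at least $c\,\delta\min(\delta,\varepsilon/L)$. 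Choose $\varepsilon=2\omega(\delta)\delta$, which is at most $2\delta$ and less than $\varepsilon_0$ for small $\delta$. On that strip $|F(x)-v|\le\varepsilon$, so on the event $\bigl|\{x\in O':|F(x)-v|\le\varepsilon\}\bigr|\ge c'\,\delta\varepsilon/L$, for $L\ge1$. Markov's inequality and the first step then give
\[
\P(E\cap G_{L,\omega})\le\frac{C_{O'}\pi\varepsilon^2 L}{c'\delta\varepsilon}=C''L\,\omega(\delta)\xrightarrow[\delta\to0]{}0,
\]
and therefore $\P(E)=0$.

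I expect the main obstacle to be exactly this geometric lower bound. The degeneracy of $DF(x^*)$ has to buy a factor $\varepsilon/\delta$ in the size of the near-level set relative to the naive bound $\varepsilon^2/L^2$, while the Taylor remainder must stay uniform in the random point $x^*$. That uniformity is why I localize to the events $G_{L,\omega}$ with a deterministic modulus, rather than using the a.s.\ continuity of $DF$ directly. I will need to check that the countable union over $(L,\omega)$ really covers almost every sample path, and that the strip estimate holds uniformly when $A=0$ as well as when $A$ has rank $1$.
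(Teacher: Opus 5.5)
The paper does not prove this lemma; it quotes it from Proposition~2.2 of [AAL25], specialized to $D=d=2$. Your occupation-measure argument is therefore a self-contained replacement, and it is sound in substance. It also uses the hypothesis in exactly the right form. The integrated bound $\int_{O'}p_{F(x)}(y)\,dx\le C_{O'}$, which is weaker than the uniform bound $\sup_x p_{F(x)}(y)\le C$ of the classical Bulinskaya lemma, is precisely what gives $\E\bigl|\{x\in O':|F(x)-v|\le\varepsilon\}\bigr|\le \pi C_{O'}\varepsilon^2$. A degenerate zero, on the other hand, forces that set to have area $\gtrsim\delta\varepsilon/L$ with $\varepsilon\ll\delta$. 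Your Taylor estimate, the strip bound (including the case $A=0$) and the Markov step are all correct.

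One step does not work as written: the claim that countably many pairs $(L,\omega)$ exhaust the sample space up to a null set. No countable family of deterministic moduli dominates every modulus of continuity (diagonalize along a sequence $\delta_k\downarrow 0$), so a countable union of events $G_{L,\omega}$ need not have full probability. Moreover, the family you propose, $\omega_j(\delta)=1/j$ for $\delta\le\delta_{j,m}$, does not tend to $0$ as $\delta\to0$. Your last display then only yields $\P(E\cap G_{L,\omega_j})\le C''L/j$, not $0$.

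The repair is easy, because the argument only ever uses a single value of $\delta$. Let $\Lambda:=\sup_{\overline{O'}}\|DF\|$ and $\omega_F(\delta):=\sup\{\|DF(x)-DF(x')\|:x,x'\in\overline{O'},\,|x-x'|\le\delta\}$; both are measurable by continuity of $DF$. Take deterministic $L\ge 1$, $\eta\in(0,1]$, and $\delta<\operatorname{dist}(\overline O,\R^2\setminus O')$ with $2\eta\delta<\varepsilon_0$. The condition $\overline O\subset O'$ is what makes this choice of $\delta$ independent of the random point $x^*$, as Markov's inequality requires. Your key step with $\varepsilon=2\eta\delta$ then gives
\[
\P\bigl(E\cap\{\Lambda\le L\}\bigr)\le C''L\eta+\P\bigl(\omega_F(\delta)>\eta\bigr).
\]
Now let $\delta\downarrow 0$; the last term vanishes by a.s.\ uniform continuity of $DF$ on $\overline{O'}$. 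Then let $\eta\downarrow0$, and finally $L\to\infty$, using $\Lambda<\infty$ a.s. With this in place of the countable-cover claim, your proof is complete.
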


\begin{proposition}
For every $n \ge 0$, almost surely, all
zeros of the real-analytic map $z\mapsto f_n(z,\overline z)$ are simple in
the sense above. In particular,  for every $n \ge 0$, $\cZ_{f_n}$ forms a point
 process.
\end{proposition}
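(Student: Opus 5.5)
We apply Lemma~\ref{lem:bulinskaya} to the real field $F=(\Re f_n,\Im f_n):\C\simeq\R^2\to\R^2$ with level $v=0$.

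\textbf{Regularity.} Almost surely the series \eqref{FnHermite} converges locally uniformly together with all derivatives. Indeed, $\sum_k|\zeta_k||z|^k/\sqrt{k!}$ converges locally uniformly almost surely, since $|\zeta_k|$ grows at most like $\sqrt{\log k}$. Hence $f_0$ is almost surely entire. Then $f_n=(n!)^{-1/2}(\partial_z-\zbar)^nf_0$ is a finite combination of $\zbar^j\partial_z^i f_0$, so it is real-analytic and in particular $C^1$ almost surely.

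\textbf{Density bound.} For each fixed $x=z$, $f_n(z,\zbar)$ is a centered circularly-symmetric complex Gaussian. Its variance is $K_n(z,z)=L_n(0)e^{|z|^2}=e^{|z|^2}>0$, by the covariance formula of Section~\ref{sec:property}. So $F(x)$ is a nondegenerate Gaussian vector in $\R^2$ with covariance $\tfrac12 e^{|z|^2}I_2$, and its density is
\[
p_{F(x)}(y)=\frac{1}{\pi e^{|z|^2}}\exp\bigl(-|y|^2e^{-|z|^2}\bigr)\le \frac{1}{\pi}e^{-|z|^2}\le\frac1\pi .
\]
This holds uniformly in $y\in\R^2$. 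Therefore, for any relatively compact open $O$ and any neighborhood $V$ of $0$,
\[
\int_O p_{F(x)}(y)\,dx\le |O|/\pi=:C_O .
\]

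\textbf{Conclusion.} Lemma~\ref{lem:bulinskaya} gives
\[
\P\bigl(\exists z\in O: f_n(z,\zbar)=0,\ \det DF(z)=0\bigr)=0 .
\]
Exhausting $\C$ by the countable family of disks $O_j=B(0,j)$, almost surely every zero is simple. By the inverse function theorem, each simple zero is isolated, and no bounded set can contain infinitely many of them. Otherwise an accumulation point $z_*$ would be a zero, by continuity of $f_n$, hence simple and isolated, which is a contradiction. Thus $\cZ_{f_n}$ is almost surely a locally finite set and defines a point process; measurability of the counting variables is standard for zero sets of continuous fields.

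For reference, $\det DF=|\partial_z f_n|^2-|\partial_{\zbar}f_n|^2$. This matches the form appearing in \eqref{Kak-Rice} once $\partial_z$ is replaced by $\nablazup$ at zeros, where the $-\zbar f_n$ term vanishes.

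The main obstacle is essentially absent. The only points needing care are the almost sure smoothness of the random series, which reduces to $f_0$ being entire, and the uniform lower bound on the variance. Both are immediate here because $K_n(z,z)=e^{|z|^2}$ never vanishes.
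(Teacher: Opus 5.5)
Your proposal is correct and follows essentially the same route as the paper. Both apply the weak Bulinskaya lemma (Lemma~\ref{lem:bulinskaya}) to $F_n=(\Re f_n,\Im f_n)$ at level $v=0$, use $K_n(z,z)=e^{|z|^2}\ge 1$ to bound the density by $1/\pi$ (hence $\int_O p_{F_n(z)}(y)\,dz\le |O|/\pi$), and exhaust $\C$ by the disks $B(0,m)$. Your extra justification of smoothness via the growth of $|\zeta_k|$ and your explicit accumulation-point argument for local finiteness simply spell out steps the paper states more briefly.
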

\begin{proof}
Fix $n \ge 0$ and write the associated $\R^2$-valued field
as 
\begin{equation*}
F_n(z):=\bigl(\Re f_n(z,\overline z),\,\Im f_n(z,\overline
 z)\bigr) \in \R^2, \qquad
z\in\C\simeq\R^2.
\end{equation*}
The sample paths of $F_n$ are real-analytic, since $f_0$ is entire and $f_n=(n!)^{-1/2}(\partial_z-\overline z)^n f_0$ is obtained by applying a
finite-order differential operator with smooth coefficients.
Fix a relatively compact open set $O \subset \C$. For each $z\in O$, the random
variable $f_n(z,\bar z)$ is a centered complex Gaussian with variance
\[
\E\bigl[|f_n(z,\bar z)|^2\bigr]=K_n(z,z)=e^{|z|^2} \ge 1.
\]
Therefore, identifying $y=(y_1,y_2)\in \R^2$ with
 $y_1+iy_2\in \C$, $F_n(z)$ has density
\[
p_{F_n(z)}(y)
=
\frac{1}{\pi K_n(z,z)}
\exp\Bigl(-\frac{|y|^2}{K_n(z,z)}\Bigr) \le \frac{1}{\pi},
\qquad y\in \R^2,  
\]
for all $z\in O$ and all $y\in \R^2$. Hence
\[
\int_O p_{F_n(z)}(y)\,dz \le \frac{|O|}{\pi}
\qquad\text{for all } y\in \R^2.
\]
Lemma~\ref{lem:bulinskaya}, applied with $v=0$, implies that almost surely
there is no point $z\in O$ such that
$F_n(z)=0$ and $\det DF_n(z)=0$.
This means that almost surely every zero of $f_n$ in $O$ is
non-degenerate,
hence simple. By the inverse function theorem, every such
zero is isolated. 
Taking the exhaustion $\{O_m=B(0,m)\}_{m \ge 1}$, 
we conclude that all zeros in $\C$
 are simple almost surely. Since $\cZ_{f_n}$ is closed and has
 no accumulation point in $\C$, its intersection with every
 compact set is finite. Hence $\cZ_{f_n}$ is a locally finite
 simple point process. 
\end{proof} 

\section{Proof of Theorems~\ref{Main} and \ref{LongRange}}\label{sec:proof12}

\subsection{Kac-Rice formulas}

The formulas giving the statistics of zeros of Gaussian functions are known
in general as Kac-Rice formulas. We use a Kac-Rice formula
from \cite{level} after making a few adaptations to fit our
setting and notation, 
so that we can write it in the form (\ref{Kak-Rice}). Let 
\begin{equation*}
\partial_{z}=(\partial _{x}-i\partial _{y})/2,\quad
 \partial_{\zbar} 
=(\partial _{x}+i\partial _{y})/2, 
\label{eq:wirtinger}
\end{equation*}
so that 
\begin{equation*}
\partial_{x}=\partial _{z}+\partial _{\zbar},\quad \partial
_{y}=i(\partial _{z}-\partial _{\zbar}). 
\end{equation*}
For
$f(z)=f(z,\zbar)=u(z,\zbar)+iv(z,\zbar)$, 
we also write $f(z)=(u(z,\zbar),v(z,\zbar))^{\top }\in
\R^{2}$. 
To ease notation, we will sometimes write 
$f(z)$ for $f(z,\zbar)$ when no confusion can arise. 
Then, 
\begin{equation*}
\det f^{\prime }(z)=
\begin{vmatrix}
\partial_{x}u & \partial_{y}u \\ 
\partial_{x}v & \partial_{y}v
\end{vmatrix}
=-2i
\begin{vmatrix}
\partial_{z}u & \partial_{\zbar}u \\ 
\partial_{z}v & \partial_{\zbar}v
\end{vmatrix}
=
\begin{vmatrix}
\partial_{z}f & \partial_{\zbar}f \\ 
\partial_{z}\overline{f} & \partial_{\zbar}\overline{f}
\end{vmatrix}
=
\begin{vmatrix}
\partial _{z}f & \partial _{\zbar}f \\ 
\overline{\partial _{\zbar}f} & \overline{\partial _{z}f}
\end{vmatrix}
=|\partial _{z}f|^{2}-|\partial _{\zbar}f|^{2}. 
\end{equation*}
Since $u(z,\zbar)=v(z,\zbar)=0$ for $z\in
\cZ_{f}:=\{z\in \C:f(z,\zbar)=0\}$,
then 
\begin{equation*}
\nablazup u(z,\zbar)=(\partial_{z}-\zbar)u(z,\zbar)
=\partial_{z}u(z,\zbar)\quad \text{for $z\in \cZ_{f}$.}
\end{equation*}
Hence, 
\begin{equation*}
\det f^{\prime }(z)=|\nablazup f|^{2}-|\nablazdown f|^{2} 
\quad \text{for $z\in \cZ_{f}$.}
\end{equation*}
Assume that $(f,g)$ is a jointly smooth Gaussian process
and that, for the given pair $(z,w)$ with $z \not=w$,  
the random vector $(f(z), g(w))$ is non-degenerate, 
so that its covariance matrix is invertible. 
Then, by the Kac-Rice formula \cite[Exercise 6.1]{level}, 
\begin{align}
\rho _{\cZ_{f},\cZ_g}^{(2)}(z,w) & =\E\left[
|\det f^{\prime }(z)\det g^{\prime }(w)| \ | \
f(z)=g(w)=0\right] \cdot p_{(f(z), g(w))}(0,0)  
\label{KacRice} \\
&=\E\left[ \big| |\nablazup f|^{2}-|\nablazdown f|^{2} \big| 
\cdot \big| |\nablawup g|^{2}-|\nablawdown g|^{2} \big| \ | \ f(z)=g(w)=0
\right] \cdot p_{(f(z), g(w))}(0,0)  \notag
\end{align} 
For fixed $z,w \in \C$, we consider the following $6$-dimensional
Gaussian vector  
\begin{equation*}
\bF(z,w):=(f(z),g(w),\nablazup f(z),\nablazdown
 f(z),\nablawup g(w), \nablawdown g(w)) 
=:(\bF^{(1)},\bF^{(2)}),
\end{equation*}
where
$\bF^{(1)}=(f(z),g(w)),\bF^{(2)}
= (\nablazup f(z), \nablazdown f(z),
\nablawup g(w),\nablawdown g(w))$. 
The covariance matrix of $\bF$ is written
as 
\begin{equation*}
\Sigma =
\begin{pmatrix}
U & V \\ 
V^{\ast } & W
\end{pmatrix},
\end{equation*}
where $U,W$ are the covariance matrix of $\bF^{(1)}$
and $\bF^{(2)}$, respectively, and $V$ is the
cross-covariance matrix of $\bF^{(1)}$ and
$\bF^{(2)}$. Then, the covariance matrix of
$\bF^{(2)}$ given that $\bF^{(1)}=\mathbf{0}$ is provided by the Schur
complement 
\begin{equation*}
W-V^{\ast }U^{-1}V. 
\end{equation*}

\subsection{Conditional Gaussian processes and the joint density}

We encode all derivatives appearing in the Kac-Rice formula into a
6-dimensional Gaussian vector, and then pass to the Schur complement. 
Recall the representation (\ref{FnHermite}):
\begin{equation*}
f_{n}(z,\zbar)=\sum_{k=0}^{\infty }\zeta
 _{k}H_{k,n}(z,\zbar). 
\end{equation*}
From (\ref{ComplexHermite}), one can check that $\nablazup
H_{k,n}(z,\zbar) = \sqrt{n+1}H_{k,n+1}(z,\zbar)$
and that $\nablazdown H_{k,n}(z,\zbar) = -\sqrt{n}H_{k,n-1}(z,\zbar)$. Therefore, 
\begin{eqnarray*}
\nablazup f_{n}(z,\zbar) &=&\sqrt{n+1}\sum_{k=0}^{\infty
}\zeta _{k}H_{k,n+1}(z,\zbar)=\sqrt{n+1}f_{n+1}(z,\zbar) \\
\nablazdown f_{n}(z,\zbar) &=&-\sqrt{n}
\sum_{k=0}^{\infty }\zeta
_{k}H_{k,n-1}(z,\zbar)=-\sqrt{n}f_{n-1}(z,\zbar),
\end{eqnarray*}
leading to the following relations 
\begin{equation}
\nablazup f_{n}=\sqrt{n+1}f_{n+1}, \quad 
\nablazdown f_{n}=-\sqrt{n}f_{n-1}  
\label{eq:lowering-raizing}
\end{equation}
with the convention $f_{-1}=0$. 
We will use these relations systematically to express the entries of 
the covariance matrix in a form suitable for computing both
the short-range and long-range limits, 
without relying on the explicit formulas for the entries in terms
of special functions, which are quite complicated expressions in terms of
Laguerre polynomials, practically untreatable without resorting to symbolic
computation. This will be used both in the proofs of
Theorems~\ref{Main} and \ref{LongRange}. 
Let us consider the following $6$-dimensional Gaussian vector 
\begin{align*}
\bF_{k}(z,w)& = \big(
 f_{n}(z),(\nablawup)^{k}f_{n}(w), \nablazup f_{n}(z),
 \nablazdown f_{n}(z),(\nablawup)^{k+1}f_{n}(w),\nablawdown(\nablawup)^{k}f_{n}(w) \big) \\
& =\big( f_{n}(z),\sqrt{(n+1)_{k}}f_{n+k}(w), \\
&
 \sqrt{n+1}f_{n+1}(z),-\sqrt{n}f_{n-1}(z),\sqrt{(n+1)_{k+1}}f_{n+k+1}(w),
-\sqrt{n+k}\sqrt{(n+1)_{k}}f_{n+k-1}(w) \big),
\end{align*}
where $(\alpha )_{k}=\prod_{j=0}^{k-1}(\alpha +j)$ is the Pochhammer symbol.
We denote the covariance matrix of $\bF_k(z,w)$ by 
$\Sigma_{n,k}(z,w)$ and write it as 
\begin{equation*}
\Sigma_{n,k}(z,w)
=\E\left[ \bF_{k}(z,w)^{\intercal }
\overline{\bF_{k}(z,w)}\right] =
\begin{pmatrix}
U_{n,k}(z,w) & V_{n,k}(z,w) \\ 
V_{n,k}^{\ast }(z,w) & W_{n,k}(z,w), 
\end{pmatrix}
\end{equation*}
where $U_{n,k}(z,w)$ is the covariance matrix of
\begin{equation*}
\bF_{k}^{(1)}(z,w)=(f_{n}(z),(\nablawup)^{k}f_{n}(w))
 = (f_{n}(z),\sqrt{(n+1)_{k}}f_{n+k}(w))
\end{equation*}
and $W_{n,k}(z,w)$ is the covariance matrix of
\begin{equation*}
\bF_{k}^{(2)}(z,w)=(\nablazup f_{n}(z),\nablazdown
 f_{n}(z), (\nablawup)^{k+1}f_{n}(w),\nablawdown(\nablawup)^{k}f_{n}(w))
\end{equation*}
\begin{equation*}
=(\sqrt{n+1}f_{n+1}(z),-\sqrt{n}f_{n-1}(z),\sqrt{(n+1)_{k+1}}f_{n+k+1}(w),
-\sqrt{n+k}\sqrt{(n+1)_{k}}f_{n+k-1}(w)). 
\end{equation*}
Then, setting
\begin{equation}
s_{n,l}=s_{n,l}(z,w):=\E[f_{n}(z,\zbar)\overline{f_{l}(w,
\wbar)}], 
\label{eq:snl-def}
\end{equation}
so that the covariance kernel is
$s_{n,n}=K_{n}(z,w)=L_{n}(|z-w|^{2})e^{z \wbar}$
\begin{align}
U_{n,k}(z,w)& =
\begin{pmatrix}
e^{|z|^{2}} & \sqrt{(n+1)_{k}}s_{n,n+k} \\ 
\sqrt{(n+1)_{k}}\overline{s_{n,n+k}} & (n+1)_{k}e^{|w|^{2}}
\end{pmatrix}
\label{U} \\
\frac{V_{n,k}(z,w)}{\sqrt{(n+1)_{k}}}& =
\begin{pmatrix}
0 & 0 & \sqrt{n+k+1}s_{n,n+k+1} & -\sqrt{n+k}s_{n,n+k-1} \\ 
\sqrt{n+1}\overline{s_{n+1,n+k}} & -\sqrt{n}\overline{s_{n-1,n+k}} & 0 & 0
\end{pmatrix}
\label{V}
\end{align}
\begin{equation*}
W_{n,k}(z,w)=
\end{equation*}
\begin{equation}
\begin{pmatrix}
(n+1)e^{|z|^{2}} & 0 & \sqrt{(n+1)}\sqrt{(n+1)_{k+1}} \overline{s_{n+1,n+k+1}
} & -\sqrt{(n+1)(n+k)(n+1)_{k}} \overline{s_{n+1,n+k-1}} \\ 
0 & ne^{|z|^{2}} & -\sqrt{(n)_{k+2}}s_{n-1,n+k+1} & \sqrt{(n)_{k+1}(n+k)}
s_{n-1,n+k-1} \\* 
& \ast & (n+1)_{k+1}e^{|w|^{2}} & 0 \\* 
& \ast & 0 & (n+1)_{k}(n+k)e^{|w|^{2}}
\end{pmatrix}
\label{w}
\end{equation}
where we used that $K_{n}(z,z)=e^{|z|^{2}}$ for every $n=0,1,\dots $ , which
follows from $L_{n}(0)=1$.
Note that for $z\ne w$, the two evaluation functionals appearing in
$\bF_k^{(1)}(z,w)$ are linearly independent; hence
$U_{n,k}(z,w)$ is non-singular.

The Schur complement is the covariance matrix of the
Gaussian vector 
$\bF_{k}^{(2)}(z,w)$ given that $\bF_{k}^{(1)}(z,w)=(0,0)$,
i.e. $f_{n}(z)=(\nablawup)^{k}f_{n}(w)=0$, which is calculated
as 
\begin{equation*}
\Lambda_{n,k}(z,w):=W_{n,k}(z,w)-V_{n,k}^{\ast
}(z,w)U_{n,k}^{-1}(z,w)V_{n,k}(z,w).
\end{equation*}
$\Lambda_{0,1}$ is the same as Feng's $\Lambda $ in \cite{Feng}. The joint
density of $(\bF_k^{(1)}(z,w), \bF_{k}^{(2)}(z,w))$ at $((0,0),
\xi)$ can now be written as 
\begin{equation*}
p(\xi )=\frac{e^{-\left\langle \Lambda _{n,k}^{-1}(z,w)\xi ,\xi
\right\rangle }}{\det \pi U_{n,k}(z,w)\det \pi \Lambda_{n,k}(z,w)}, 
\end{equation*}
where $\xi =(\xi _{1},\xi _{2},\xi _{3},\xi _{4})$.

\subsection{Proof of Theorem~\ref{Main}}\label{subsec:proofoftheorem1}

Combining the proposition below with (\ref{onepoint}) we obtain the precise
values for the correlations as $w\rightarrow z$: 
\begin{equation}
g_{n,n+k}(z,z) = 
\begin{cases}
\disp
 \frac{n(n+2)}{(n+\frac{1}{2}+\frac{1}{4n+2})(n+\frac{3}{2}+\frac{1}{4n+6})}<1
 & \text{if $k=1 $} \\[4mm]  
 \disp \frac{n^2+3n+1 + \frac{2}{3(n^{2}+3n+1)}\left\{ \frac{n^{4}(n+2)^{2}}{(2n+1)^{2}
 }+\frac{(n+1)^{2}(n+3)^{4}}{(2n+5)^{2}}\right\} }{(n+\frac{1}{2}+\frac{1}{4n+2}){(n+\frac{5}{2}+\frac{1}{4n+10})}}>1 & \text{if $k=2$} \\[4mm] 
\disp 1 & \text{if $k\geq 3$},  
\end{cases}
\label{precise}
\end{equation}
where $g_{n,n+k}(z,z)$ is defined as in \eqref{eq:gzz}. 

\begin{proposition}
Let $\rho_{n,n+k}^{(2)}(z,z) = \lim_{w \to z : w\not=z}
 \rho_{n,n+k}^{(2)}(z,w)$. Then, 
\begin{equation*}
\rho_{n,n+k}^{(2)}(z,z) = 
\begin{cases}
\disp \frac{n(n+2)}{\pi^{2}} & \text{if $k=1$} \\[2mm] 
 \disp \frac{1}{\pi^{2}}\left[ n^2+3n+1 + \frac{2}{3(n^2+3n+1)} 
 \left\{ \frac{n^{4}(n+2)^{2}}{(2n+1)^{2}}+\frac{(n+1)^{2}(n+3)^{4}}{(2n+5)^{2}}\right\} 
 \right] & \text{if $k=2$} \\[2mm] 
\disp \rho _{n}^{(1)}(z)\rho _{n+k}^{(1)}(z) & \text{if $k\geq
 3$}. 
\end{cases}
\end{equation*}
\end{proposition}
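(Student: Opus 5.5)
The idea is that the contact limit $w\to z$ can be evaluated by simply setting $w=z$ in the Kac--Rice formula \eqref{KacRice}, because the conditioning matrix stays non-degenerate there. At $w=z$, the discrete orthogonality \eqref{eq:discrete-orthogonality} gives
\[
s_{n,l}(z,z)=\delta_{nl}e^{|z|^2}.
\]
Hence, by \eqref{U},
\[
U_{n,k}(z,z)=\diag\bigl(e^{|z|^2},(n+1)_k e^{|z|^2}\bigr),
\]
which is invertible for every $k\ge1$. Since all entries $s_{n,l}(z,w)$ are continuous in $w$, the Schur complement $\Lambda_{n,k}(z,w)$ and the density factor $p_{(f(z),g(w))}(0,0)=(\det \pi U_{n,k})^{-1}$ are continuous at $w=z$. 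The conditional expectation of $|\det f'(z)\det g'(w)|$ is the expectation of a fixed quartic polynomial (in absolute value) of a centered Gaussian vector whose covariance depends continuously on $w$. It is therefore continuous in $w$, even where the limiting covariance is degenerate, for instance by uniform integrability from fourth moments. So
\[
\rho^{(2)}_{n,n+k}(z,z)=\frac{1}{\det \pi U_{n,k}(z,z)}\,\E\Bigl[\bigl|\det f_n'(z)\bigr|\cdot\bigl|\det g'(z)\bigr|\Bigm| f_n(z)=g(z)=0\Bigr],\qquad g=(\nablawup)^k f_n .
\]
At $w=z$ the entries of $\bF_k(z,z)$ are scalar multiples of $f_n(z),f_{n+k}(z),f_{n+1}(z),f_{n-1}(z),f_{n+k+1}(z),f_{n+k-1}(z)$. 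These are independent $N_\C(0,e^{|z|^2})$ variables for distinct indices, so conditioning on $f_n(z)=f_{n+k}(z)=0$ just sets those two coordinates to zero. Write $X_j:=e^{-|z|^2/2}f_j(z)$, which are i.i.d.\ standard complex Gaussians, so that $E_j:=|X_j|^2$ are i.i.d.\ $\mathrm{Exp}(1)$. By \eqref{eq:lowering-raizing},
\[
|\det g'|=(n+1)_k\bigl|(n+k+1)|f_{n+k+1}|^2-(n+k)|f_{n+k-1}|^2\bigr| .
\]
The factor $(n+1)_k$ cancels the one in $\det U_{n,k}$, and every $e^{|z|^2}$ cancels as well. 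In each case below the final answer is $\pi^{-2}$ times an expectation in the $E_j$.

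\emph{Case $k\ge3$.} The indices $n\pm1$ and $n+k\pm1$ are all distinct from each other and from $n,n+k$. The expectation therefore factorizes as
\[
\E\bigl|(n+1)E_{n+1}-nE_{n-1}\bigr|\cdot\E\bigl|(n+k+1)E_{n+k+1}-(n+k)E_{n+k-1}\bigr| .
\]
Using $\E|aE-bE'|=(a^2+b^2)/(a+b)$ for independent exponentials, each factor reproduces the one-point intensity \eqref{onepoint} (times $\pi$). This gives $\rho^{(1)}_n\rho^{(1)}_{n+k}$.

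\emph{Case $k=1$.} Now $f_{n+1}(z)$ is conditioned to vanish, and the term $f_{n+k-1}=f_n$ is conditioned to vanish. Hence $|\det f_n'|=n|f_{n-1}|^2$ and $|\det g'|\propto(n+2)|f_{n+2}|^2$. Since $\E E_{n-1}E_{n+2}=1$, this gives $n(n+2)/\pi^2$.

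\emph{Case $k=2$.} Here no derivative coordinate is conditioned away, but $f_{n+1}$ appears in both determinants. The task becomes computing
\[
J:=\E\Bigl[\bigl|(n+1)A-nB\bigr|\cdot\bigl|(n+3)C-(n+2)A\bigr|\Bigr],\qquad A,B,C\ \text{i.i.d. } \mathrm{Exp}(1).
\]
This computation is the only real obstacle. I would condition on $A$ and use $\E_B|\alpha-\beta B|=\alpha-\beta+2\beta e^{-\alpha/\beta}$ for $\alpha,\beta>0$. This gives
\[
\E_B|(n+1)A-nB|=(n+1)A-n+2n e^{-(n+1)A/n},
\]
\[
\E_C|(n+3)C-(n+2)A|=(n+3)-(n+2)A+2(n+3)e^{-(n+2)A/(n+3)} .
\]
Multiplying these and integrating against $e^{-A}\,dA$ leaves only integrals of the form $\int A^j e^{-\lambda A}dA=j!/\lambda^{j+1}$. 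The exponents produce denominators $(2n+1)$, $(2n+5)$ and $n^2+3n+1$; the last one arises from $1+\frac{n+1}{n}+\frac{n+2}{n+3}$. The remaining work is to organize the resulting rational function into the stated form. I would check the result against the small values $n=0,1,2$ implied by $g_{0,2}=79/65$ and its neighbours.
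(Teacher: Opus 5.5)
The overall route is the paper's. At $w=z$, orthogonality makes the relevant $f_j(z)$ independent, and conditioning simply removes the coordinates proportional to $f_n(z)$ and $f_{n+k}(z)$; this is what the paper's computation of $\Lambda_{n,k}(z,z)$ encodes. Your $k=1$ and $k\ge 3$ cases are correct. For $k=2$, conditioning on the shared variable $A=E_{n+1}$ is exactly how the paper proves \eqref{eq:formula2} in Lemma~\ref{lem:formula1}.

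There is, however, a concrete error in the $k=2$ step. Apply your own identity $\E_B|\alpha-\beta B|=\alpha-\beta+2\beta e^{-\alpha/\beta}$ with $\alpha=(n+2)A$ and $\beta=n+3$. It gives
\[
\E_C\bigl|(n+3)C-(n+2)A\bigr|=(n+2)A-(n+3)+2(n+3)e^{-(n+2)A/(n+3)}.
\]
You wrote $(n+3)-(n+2)A+2(n+3)e^{-(n+2)A/(n+3)}$ instead. That expression equals $3(n+3)$ at $A=0$ rather than $n+3$, and it is negative for large $A$.

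Carried through, your version replaces $J$ by $J-2\E\bigl[\bigl((n+1)A-n+2ne^{-(n+1)A/n}\bigr)\bigl((n+2)A-(n+3)\bigr)\bigr]$. For $n=0$ this gives $29/25$ instead of $79/25$, i.e.\ $g_{0,2}=29/65<1$. That would reverse the attraction conclusion. Your planned check against $79/65$ would catch it, but as written the step fails.

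With the sign corrected, factor $n$ and $n+3$ out of the two conditional means. This gives $J=n(n+3)\,\E\bigl[(xA-1+2e^{-xA})(yA-1+2e^{-yA})\bigr]$ with $x=\frac{n+1}{n}$ and $y=\frac{n+2}{n+3}$, which is exactly the quantity evaluated in \eqref{eq:formula2}. The algebra you left undone is then short, using
\[
2xy-(x+y)+1=\frac{n^2+3n+1}{n(n+3)},\quad 1+x+y=\frac{3(n^2+3n+1)}{n(n+3)},\quad 1+x=\frac{2n+1}{n},\quad 1+y=\frac{2n+5}{n+3}.
\]
These produce precisely the stated expression. The case $n=0$, where $x$ is undefined, is handled directly: the first factor is just $A$, and one gets $J=79/25$.
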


\begin{proof}
We first observe from \eqref{eq:discrete-orthogonality} and
 \eqref{eq:snl-def} that 
\begin{equation*}
\lim_{w\rightarrow z}s_{n,l}(z,w) 
= e^{|z|^2} \delta_{n,l}. 
\end{equation*}
For $k\geq 1$, as $w\rightarrow z$, we have 
\begin{align*}
U_{n,k}(z,z)& =e^{|z|^{2}}
\begin{pmatrix}
1 & 0 \\ 
0 & (n+1)_{k}
\end{pmatrix}
\\
V_{n,k}(z,z)& =e^{|z|^{2}}
\begin{pmatrix}
0 & 0 & 0 & -(n+1)\delta _{k,1} \\ 
(n+1)\delta _{k,1} & 0 & 0 & 0
\end{pmatrix}
\\
W_{n,k}(z,z)& =e^{|z|^{2}}
\begin{pmatrix}
n+1 & 0 & 0 & -(n+1)_{k}\delta _{k,2} \\ 
0 & n & 0 & 0 \\ 
0 & 0 & (n+1)_{k+1} & 0 \\ 
-(n+1)_{k}\delta _{k,2} & 0 & 0 & (n+1)_{k}(n+k)
\end{pmatrix}
\end{align*}
where $\delta _{a,b}$ is Kronecker's delta. Therefore, since $V_{n,k}(z,z)=0$
for $k\geq 2$, 
\begin{equation}
\Lambda_{n,k}(z,z)=e^{|z|^{2}}
\begin{cases}
\begin{pmatrix}
0 & 0 & 0 & 0 \\ 
0 & n & 0 & 0 \\ 
0 & 0 & (n+1)(n+2) & 0 \\ 
0 & 0 & 0 & 0
\end{pmatrix}
& \text{if $k=1$} \\ 
W_{n,k} & \text{if $k\geq 2$}
\end{cases}
\label{eq:lambdank}
\end{equation}

Thus, from (\ref{KacRice}), the correlation between the zero
 sets of $f_{n}(z,\zbar)$ and $f_{n+k}(w,\wbar)$ is given by 
\begin{align*}
\rho _{n,n+k}^{(2)}(z,w) 
&= \E\left[ \left\vert
|\nablazup f_{n}|^{2}-|\nablazdown f_{n}|^{2}\right\vert
\cdot \big| |\nablawup(\nablawup)^{k}f_{n}|^{2}
-|\nablawdown(\nablawup)^{k}f_{n}|^{2} \big| \ | \
f_{n}(z)=h_{n+k}(w)=0\right] \\
&\quad \times p_{(f_n(z), h_{n+k}(w))}(0,0), 
\end{align*}
where $h_{n+k}(w) = (\nablawup)^k f_n(w) = \sqrt{(n+1)_k}
 f_{n+k}(w)$.  
Since $h_{n,k}$ is a non-zero scalar multiple of $f_{n+k}$,
 this gives the same zero intensity for the pair
 $(\cZ_{f_n}, \cZ_{f_{n+k}})$. 
Passing to the limit as $w \to z$, we obtain 
\begin{eqnarray*}
\rho_{n,n+k}^{(2)}(z,w) &=&\frac{1}{\det \pi U_{n,k}(z,w)}\int_{\C
^{4}}\left\vert |\xi _{1}|^{2}-|\xi _{2}|^{2}\right\vert \cdot \left\vert
|\xi _{3}|^{2}-|\xi _{4}|^{2}\right\vert \frac{e^{-\left\langle \Lambda
_{n,k}^{-1}(z,w)\xi ,\xi \right\rangle }}{\det \pi \Lambda _{n,k}(z,w)}
dV_{\xi } \\
&\rightarrow &\frac{1}{\det \pi U_{n,k}(z,z)}\E\left[ \left\vert
|X_{1}|^{2}-|X_{2}|^{2}\right\vert \cdot \left\vert
|X_{3}|^{2}-|X_{4}|^{2}\right\vert \right]
\end{eqnarray*}
where $(X_{1},X_{2},X_{3},X_{4})$ is a centered complex Gaussian vector with covariance
matrix $\Lambda_{n,k}(z,z)$ given by (\ref{eq:lambdank}). 
Although $\Lambda_{n,k}(z,z)$ is singular, the passage to
 the limit is justified as follows. The conditional Gaussian
 vectors with covariance $\Lambda_{n,k}(z,w)$ converge in
 distribution to the centered Gaussian vector with
 covariance $\Lambda_{n,k}(z,z)$. Since the integrand is a
 polynomially bounded function of the Gaussian vector and
 the covariance matrices remain bounded near $w=z$, the
 corresponding family is uniformly integrable. Hence the
 expectations converge.
Note that 
\begin{equation*}
\det \pi U_{n,k}(z,z)=\pi^{2} e^{2|z|^2} (n+1)_{k}.
\end{equation*}
Let $Z_{1},Z_{2},Z_{3},Z_{4}$ be i.i.d. standard complex normal random
variables. Then, from \eqref{eq:lambdank}, we see that 
\begin{equation}
(X_{1},X_{2},X_{3},X_{4}) \inlaw e^{|z|^2/2}
\begin{cases}
(0,\sqrt{n}Z_{2},\sqrt{(n+1)_{2}}Z_{3},0) & \text{for $k=1$}, \\[2mm] 
(\sqrt{n+1}Z_{1}, \sqrt{n}Z_{2}, \sqrt{(n+1)_{3}}Z_{3}, -\sqrt{(n+1)_{2}(n+2)}
 Z_{1}) & \text{for $k=2$}, \\[2mm] 
(\sqrt{n+1}Z_{1}, \sqrt{n}Z_{2}, \sqrt{(n+1)_{k+1}}Z_{3}, \sqrt{(n+1)_{k}(n+k)}
Z_{4}) & \text{for $k\geq 3$}. 
\end{cases}
\label{eq:inlaw}
\end{equation}
When $k=1$, using \eqref{eq:inlaw} and the formula \eqref{eq:formula1} in
Lemma~\ref{lem:formula1}, 
\begin{align*}
\rho _{n,n+1}^{(2)}(z,w)& \rightarrow \frac{1}{\det \pi
 U_{n,1}(z,z)}\E 
\left[ \left\vert |X_{1}|^{2}-|X_{2}|^{2}\right\vert \cdot \left\vert
|X_{3}|^{2}-|X_{4}|^{2}\right\vert \right] \\
& =\frac{1}{\pi ^{2}(n+1)}\E\left[ \left\vert
-n|Z_{2}|^{2}\right\vert \right] \cdot \E\left[ \left\vert
(n+1)_{2}|Z_{3}|^{2}\right\vert \right] \\
& =\frac{1}{\pi ^{2}(n+1)}n\cdot (n+1)_{2} \\
& =\frac{1}{\pi ^{2}}n(n+2).
\end{align*}
When $k\geq 3$, using \eqref{eq:inlaw} and the formula \eqref{eq:formula1}
in Lemma~\ref{lem:formula1}, as $w\rightarrow z$ 
\begin{align*}
\rho _{n,n+k}^{(2)}(z,w)& \rightarrow \frac{1}{\det \pi U_{n,k}(z,z)}\mathbb{
E}\left[ \left\vert |X_{1}|^{2}-|X_{2}|^{2}\right\vert \cdot \left\vert
|X_{3}|^{2}-|X_{4}|^{2}\right\vert \right] \\
& =\frac{1}{\pi ^{2}(n+1)_{k}}\E\left[ \left\vert
(n+1)|Z_{1}|^{2}-n|Z_{2}|^{2}\right\vert \right] \cdot \E\left[
\left\vert (n+1)_{k+1}|Z_{3}|^{2}-(n+k)(n+1)_{k}|Z_{4}|^{2}\right\vert 
\right] \\
& =\frac{1}{\pi
 ^{2}(n+1)_{k}}\frac{(n+1)^{2}+n^{2}}{(n+1)+n}\cdot 
\frac{(n+1)_{k+1}^{2}+(n+k)^{2}(n+1)_{k}^{2}}{(n+1)_{k+1}+(n+k)(n+1)_{k}} \\
& =\frac{1}{\pi ^{2}}\frac{(n+1)^{2}+n^{2}}{(n+1)+n}\cdot 
\frac{(n+k+1)^{2}+(n+k)^{2}}{(n+k+1)+(n+k)} \\
& =\rho _{n}^{(1)}(z)\rho _{n+k}^{(1)}(w)
\end{align*}
When $k=2$, using \eqref{eq:inlaw}, we note that as $w\rightarrow z$ 
\begin{align*}
\rho _{n,n+2}^{(2)}(z,w)& \rightarrow \frac{1}{\det \pi
 U_{n,2}(z,z)}
\E\left[ \left\vert |X_{1}|^{2}-|X_{2}|^{2}\right\vert \cdot \left\vert
|X_{3}|^{2}-|X_{4}|^{2}\right\vert \right] \\
& =\frac{1}{\pi ^{2}(n+1)_{2}}\E\left[ \left\vert
(n+1)|Z_{1}|^{2}-n|Z_{2}|^{2}\right\vert \cdot \left\vert
(n+1)_{3}|Z_3|^{2} - (n+2)(n+1)_{2}|Z_1|^2 \right\vert \right] .
\end{align*}
We first assume $n\geq 1$. The case $n=0$ follows either by
 a direct computation or by taking the continuous limit
 $b \downarrow 0$ in Lemma~\ref{lem:formula1}. 
Using the formula \eqref{eq:formula2} in Lemma~\ref{lem:formula1} with $%
x=(n+1)/n,\ y=(n+2)/(n+3),\ bd/(n+1)_{2}=n(n+3)$, we have 
\begin{align*}
& 2xy-(x+y)+1=\frac{n^2+3n+1}{n(n+3)}, \\
&
 \frac{2y^{2}}{(1+x)^{2}(1+x+y)}=
\frac{2n^3(n+2)^2}{3(n+3)(2n+1)^2 (n^2+3n+1)}, \\ 
& \frac{2x^{2}}{(1+y)^{2}(1+x+y)}
= \frac{2(n+3)^3(n+1)^2}{3n (2n+5)^2 (n^2+3n+1)}. 
\end{align*} 
Therefore, 
\begin{equation*}
\rho _{n,n+2}^{(2)}(z,w)\rightarrow \frac{1}{\pi ^{2}}\left[ 
n^2+3n+1
+\frac{2}{3(n^2+3n+1)}\left\{
\frac{n^4(n+2)^2}{(2n+1)^2}
+ \frac{(n+1)^2(n+3)^4}{(2n+5)^2} 
\right\} \right]. 
\end{equation*}
This completes the proof. 
\end{proof}

The next lemma is used to compute the expectations in the
above proof. 

\begin{lemma}\label{lem:formula1}
Let $Z_{1},Z_{2},Z_{3}$ be i.i.d. standard complex normal random variables.
For $a,b,c,d>0$, 
\begin{equation}
\E\left[ \left\vert a|Z_{1}|^{2}-b|Z_{2}|^{2}\right\vert \right] 
= \frac{a^{2}+b^{2}}{a+b}. 
\label{eq:formula1}
\end{equation}
\begin{align}
\lefteqn{\E\left[ |a|Z_{1}|^{2}-b|Z_{2}|^{2}|\cdot
|c|Z_{1}|^{2}-d|Z_{3}|^{2}|\right] }  \notag \\
& =bd\left( 2xy-(x+y)+\frac{2y^{2}}{(1+x)^{2}(1+x+y)}+\frac{2x^{2}}{(1+y)^{2}(1+x+y)}+1\right), 
\label{eq:formula2}
\end{align}
where $x=a/b$ and $y=c/d$.
\end{lemma}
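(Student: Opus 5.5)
For \eqref{eq:formula1} I will use the fact that $|Z_1|^2$ and $|Z_2|^2$ are independent standard exponential variables, $E_1,E_2\sim\mathrm{Exp}(1)$, and compute $\E|aE_1-bE_2|$ directly. Write $|t|=t+2t^-$. Since $\E[aE_1-bE_2]=a-b$, it suffices to find $\E[(aE_1-bE_2)^-]$. Condition on $E_1=s$ and use memorylessness: $\E[(bE_2-as)^+]=b\,e^{-as/b}$. Integrating against $e^{-s}\,ds$ gives $b\cdot\frac{b}{a+b}=\frac{b^2}{a+b}$. Hence
\[
\E|aE_1-bE_2|=a-b+\frac{2b^2}{a+b}=\frac{a^2+b^2}{a+b}.
\]

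For \eqref{eq:formula2} the two factors share $E_1=|Z_1|^2$ but are conditionally independent given $E_1$, with $E_2,E_3$ i.i.d.\ $\mathrm{Exp}(1)$. So I will first compute the inner conditional expectation. For $s\ge0$,
\[
\phi_{a,b}(s):=\E|as-bE_2|=as-b+2b\,e^{-as/b}.
\]
This follows from the same identity $|t|=t+2t^-$ with $\E[(bE_2-as)^+]=be^{-as/b}$. The product of the conditional expectations gives
\[
\E\bigl[\phi_{a,b}(E_1)\,\phi_{c,d}(E_1)\bigr]
=bd\,\E\bigl[(xE_1-1+2e^{-xE_1})(yE_1-1+2e^{-yE_1})\bigr],
\]
after factoring out $b$ and $d$, with $x=a/b$ and $y=c/d$.

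Next I will expand this product into nine terms and evaluate each using $\E[E_1^2]=2$, $\E[E_1]=1$, $\E[e^{-\lambda E_1}]=\frac{1}{1+\lambda}$ and $\E[E_1e^{-\lambda E_1}]=\frac{1}{(1+\lambda)^2}$. The polynomial part gives $2xy-x-y+1$. The mixed terms $xE_1\cdot 2e^{-yE_1}$ and $-2e^{-yE_1}$ contribute $\frac{2x}{(1+y)^2}-\frac{2}{1+y}$, and symmetrically in $x\leftrightarrow y$. The last term contributes $\frac{4}{1+x+y}$.

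The remaining step, which I expect to be the only real obstacle (pure algebra), is to show that these exponential terms recombine into
\[
\frac{2y^2}{(1+x)^2(1+x+y)}+\frac{2x^2}{(1+y)^2(1+x+y)}.
\]
Equivalently, I must verify the identity
\[
\frac{2x-2(1+y)}{(1+y)^2}+\frac{2y-2(1+x)}{(1+x)^2}+\frac{4}{1+x+y}
=\frac{2x^2(1+x)^2+2y^2(1+y)^2}{(1+x)^2(1+y)^2(1+x+y)}.
\]
I will clear the common denominator $(1+x)^2(1+y)^2(1+x+y)$ and compare polynomials. To control the bookkeeping, I will first substitute $u=1+x$ and $v=1+y$, which turns the left side into $\frac{2(u-v-1)}{v^2}+\frac{2(v-u-1)}{u^2}+\frac{4}{u+v-1}$. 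A sanity check at $x=y=0$ gives $-2-2+4=0$ on both sides, which confirms the signs. Collecting all terms then yields \eqref{eq:formula2}.
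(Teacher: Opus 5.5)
Your proposal is correct and follows essentially the same route as the paper. Both arguments reduce to exponentials, condition on the shared variable $|Z_1|^2$, and use $\E|u-\fe| = u+2e^{-u}-1$ (your $|t|=t+2t^-$ plus memorylessness gives the same formula). Both then expand the product and integrate term by term using $\E[e^{-\lambda\fe}]$ and $\E[\fe e^{-\lambda\fe}]$.

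The identity you flag as the remaining obstacle does hold. After multiplying through by $u^2v^2(u+v-1)/2$, it closes immediately via
\[
(u-v-1)(u+v-1)=(u-1)^2-v^2, \qquad (v-u-1)(u+v-1)=(v-1)^2-u^2 .
\]
The two resulting $-u^2v^2$ terms then cancel the $2u^2v^2$ coming from $\frac{4}{u+v-1}$, leaving exactly $u^2(u-1)^2+v^2(v-1)^2$.
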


\begin{proof}
First we note that if $Z$ is a standard complex normal random variable, then 
$\left\vert Z\right\vert^{2}\sim \mathrm{Exp}(1)$. Now, for $\fe
\sim \mathrm{Exp}(1)$ and $u>0$, 
\begin{equation*}
\E[\left\vert u-\fe\right\vert ]=u+2e^{-u}-1
\end{equation*}
and
\begin{equation*}
\E[e^{-u \fe}]=\frac{1}{1+u},\quad \E[\fe e^{-u \fe}]=
\frac{1}{(1+u)^{2}}. 
\end{equation*}

Let $\fe_{i}\sim \mathrm{Exp}(1)$ and i.i.d. below. First we see that
\begin{equation*}
\E[\left\vert x\fe_{1}-\fe_{2}\right\vert ]=
\E\left[ \E[\left\vert x\fe_{1}-\fe
_{2}\right\vert \fe_{1}\right] ]=\E[x\fe
_{1}+2e^{-x\fe_{1}}-1 ]=\frac{1+x^{2}}{1+x}. 
\end{equation*}
Therefore, setting $x=a/b$, we obtain 
\begin{equation*}
\E\left[ \left\vert a|Z_{1}|^{2}-b|Z_{2}|^{2}\right\vert \right] =b
\E\left[ |\frac{a}{b}\fe_{1}-\fe_{2}|\right] =
\frac{a^{2}+b^{2}}{a+b}.
\end{equation*}

Set $x=a/b$ and $y=c/d$. Since
 $|x\fe_{1}-\fe_{2}|$ 
and $|y\fe_{1}-\fe_{3}|$ are independent
 given $\fe_{1}$, we see that 
\begin{align*} 
\lefteqn{ \E[\left\vert x\fe_{1}-\fe_{2}\right\vert \cdot 
\left\vert y\fe_{1}-\fe_{3}\right\vert ]} \\ 
&= \E\Big[ 
\E\big[\left| x\fe_{1}-\fe_{2}\right| \big| \fe_{1}\big] 
\cdot \E\big[\left| y \fe_{1}-\fe_{3}\right| \big| \fe_{1} \big] 
\Big] \\
&= \E\big[ \{x\fe_{1}+2e^{-x\fe_{1}}-1\}\{y
\fe_{1}+2e^{-y\fe_{1}}-1\}  \big] \\
&=\E\big[ xy\fe_{1}^{2}-(x+y)\fe_{1}+2(y
\fe_{1}e^{-x\fe_{1}}+x\fe_{1}e^{-y\fe
_{1}})+(4e^{-(x+y)\fe_{1}}-2(e^{-x\fe_{1}}+e^{-y \fe_1})+1) \big]\\
&=2xy-(x+y)+2y\frac{1}{(1+x)^{2}}+2x\frac{1}{(1+y)^{2}}+\left( 4\frac{1}{
1+x+y}-2\frac{1}{1+x}-2\frac{1}{1+y}+1\right) \\
&=2xy-(x+y)+\frac{2y^{2}}{(1+x)^{2}(1+x+y)}+\frac{2x^{2}}{(1+y)^{2}(1+x+y)}
+1. 
\end{align*}
Therefore, we have \eqref{eq:formula2}.
\end{proof}

\begin{remark}\label{rem:4/3}
As noted after  Remark~\ref{rem:corr=1}, the value $4/3$
 also appears as the saddle-point density of $f_0(z)$ in \cite{Maxima}. 
In the Landau-level Kac--Rice computation in this section,
the $k=2$ contact limit is the first case where the two Jacobian
factors share one intermediate Landau level. 
Up to the sign of one Jacobian factor, which is immaterial after taking
absolute values, the high-level limit ($n \to \infty$) gives
\[
\E\left[|\fe_1-\fe_2|\,|\fe_1-\fe_3|\right]=\frac{4}{3},
 \qquad \E\left[(\fe_1-\fe_2)(\fe_1-\fe_3)\right]=1.
\]
On the other hand, the saddle-point density in \cite{Maxima} is also obtained from a
sign-split exponential Jacobian integral; in the same normalization
one encounters
\[
\E\left[(2\fe_1-\fe_2)_+\right]=\frac{4}{3}, \qquad
\E\left[2\fe_1-\fe_2\right]=1.
\]
Thus, in both cases, a signed Jacobian expectation
equal to $1$ is converted into $4/3$ by an absolute-value or
positive-part operation. This suggests a common elementary
exponential structure behind the two computations, although a
direct geometric interpretation is not yet clear.
\end{remark}

\subsection{Proof of Theorem~\ref{LongRange}}

The result follows from the proposition below, by observing
that (\ref{onepoint}) can be written as 
\begin{equation*}
\rho_{n}^{(1)}(z)=\frac{1}{\pi}\frac{(n+1)^{2}+n^{2}}{2n+1}. 
\end{equation*}

\begin{proposition}
As $|z-w| \to \infty $  
\begin{equation*}
\rho_{n,n+k}^{(2)}(z,w) \to \frac{1}{\pi ^{2}}\frac{(n+1)^{2}+n^{2}}{
(n+1)+n}\cdot \frac{(n+k+1)^{2}+(n+k)^{2}}{(n+k+1)+(n+k)}. 
\end{equation*}
\end{proposition}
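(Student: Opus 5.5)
We follow the scheme of the short-range proposition: show that the cross-covariances $s_{n,l}(z,w)$, suitably normalized, vanish as $|z-w|\to\infty$. Then $U_{n,k}$ and $V_{n,k}$ become (after normalization) block-diagonal and zero respectively, and the Schur complement $\Lambda_{n,k}(z,w)$ becomes the block-diagonal matrix $W_{n,k}$ with its off-diagonal $z$–$w$ block removed. The Gaussian expectation in the Kac–Rice formula \eqref{KacRice} then factorizes into two copies of the one-point computation. Each copy is evaluated by \eqref{eq:formula1}.

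\textbf{Step 1: decay of the cross-covariances.} We need a bound on $s_{n,l}(z,w)=\E[f_n(z)\overline{f_l(w)}]$. By \eqref{eq:lowering-raizing}, $f_n=(n!)^{-1/2}(\nablazup)^n f_0$, so
\[
s_{n,l}(z,w)=\frac{1}{\sqrt{n!\,l!}}(\nablazup)^n(\nablawbarup)^l e^{z\wbar}.
\]
Each application of $\partial_z-\zbar$ to an expression of the form $P(z,\zbar,w,\wbar)e^{z\wbar}$ produces $(\partial_z P+(\wbar-\zbar)P)e^{z\wbar}$, and similarly for the $w$ derivative. Hence $s_{n,l}(z,w)=P_{n,l}(z-w,\overline{z-w})e^{z\wbar}$ with $P_{n,l}$ a polynomial (a Laguerre-type expression, as in \eqref{ComplexHermite}). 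Since $|e^{z\wbar}|=e^{(|z|^2+|w|^2)/2}e^{-|z-w|^2/2}$, it follows that
\[
e^{-|z|^2/2-|w|^2/2}|s_{n,l}(z,w)|\le |P_{n,l}(z-w)|\,e^{-|z-w|^2/2}\to 0 .
\]
Alternatively, we can use Lemma~\ref{lem:translation-invariance} to reduce to $z=0$ and read off the entries of $U(w)$, which decay like a polynomial times $e^{-|w|^2/2}$.

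\textbf{Step 2: normalize.} Conjugate $\Sigma_{n,k}$ by the diagonal matrix that divides the $z$-entries by $e^{|z|^2/2}$ and the $w$-entries by $e^{|w|^2/2}$; this is the same as multiplying $f$ by the unimodular-free factor $e^{-|\cdot|^2/2}$. The Kac–Rice integrand scales homogeneously, with degree two in each of the two Jacobians and $-2$ in the density. The factors $e^{|z|^2}e^{|w|^2}$ therefore cancel exactly, so $\rho^{(2)}$ is unchanged. After this normalization, Step 1 gives
\begin{align*}
\widetilde U_{n,k}&\to\diag(1,(n+1)_k),\qquad \widetilde V_{n,k}\to 0,\\
\widetilde W_{n,k}&\to\diag\bigl(n+1,\,n,\,(n+1)_{k+1},\,(n+1)_k(n+k)\bigr).
\end{align*}
The diagonal of $\widetilde W_{n,k}$ is exact; its off-diagonal entries are also of $s$-type and vanish. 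The limiting $\widetilde\Lambda_{n,k}$ is therefore diagonal and invertible, so the convergence of expectations follows directly from the continuity of Gaussian integrals of polynomially bounded functions, as in the short-range proof.

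\textbf{Step 3: evaluate.} In the limit, $(X_1,\dots,X_4)$ are independent with variances $n+1$, $n$, $(n+1)_{k+1}$ and $(n+1)_k(n+k)$. Applying \eqref{eq:formula1} to each Jacobian factor and using $\det\pi\widetilde U\to\pi^2(n+1)_k$ gives
\[
\frac{1}{\pi^2(n+1)_k}\cdot\frac{(n+1)^2+n^2}{2n+1}\cdot(n+1)_k\,\frac{(n+k+1)^2+(n+k)^2}{2n+2k+1},
\]
which is the claimed limit. This is the same arithmetic as in the $k\ge3$ case of the short-range proof.

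The main obstacle is Step 1, specifically making sure the normalized cross-covariance decays uniformly for all entries, including the mixed ones such as $s_{n-1,n+k+1}$. The polynomial-times-Gaussian structure obtained from the raising operators handles all of these entries uniformly, so it should suffice.
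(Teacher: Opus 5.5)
Your proof is correct and is essentially the paper's argument. The cross-covariances between the $z$- and $w$-entries are a polynomial in $z-w$ times $e^{z\wbar}$, so they are negligible against the diagonal entries. Hence the Schur complement becomes diagonal, and the Kac--Rice expectation factors into two one-point computations, each evaluated with \eqref{eq:formula1}.

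The only difference is cosmetic. You rescale the $z$- and $w$-entries by $e^{-|z|^2/2}$ and $e^{-|w|^2/2}$, which makes the asymptotic diagonality and its uniformity in $(z,w)$ explicit. The paper instead reduces to $(z,w)=(r,0)$ by translation and rotation covariance. One small inaccuracy: for $n=0$ the limiting $\widetilde\Lambda_{n,k}$ is singular, since $\nablazdown f_0\equiv 0$, so it is not invertible as you claim. Your continuity-of-Gaussian-integrals argument does not need invertibility, so this does not affect the proof.
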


\begin{proof}
For $n,l=0,1,\dots $, let 
\begin{equation*}
s_{n,l}(z,w):=\E[f_{n}(z,\zbar)\overline{f_{l}(w,\wbar)
}].
\end{equation*}
Since 
\begin{equation*}
f_{l}(w,\wbar)=\sum_{p=0}^{\infty }\zeta _{p}H_{p,l}(w,\wbar)
\end{equation*}
we see that, since $H_{p,l}(0,0)=\delta _{p,l}$, 
\begin{equation*}
f_{l}(0,0)=\sum_{p=0}^{\infty }\zeta _{p}H_{p,l}(0,0)=\zeta _{l}.
\end{equation*}
Then, 
\begin{equation*}
s_{n,l}(z,0)=\E[f_{n}(z,\zbar)\overline{f_{l}(0,0)}]=\mathbb{E%
}\left[ \sum_{p=0}^{\infty }\zeta _{p}H_{p,n}(z,\zbar)\overline{\zeta
_{l}}\right] =H_{l,n}(z,\zbar).
\end{equation*}
Setting $w=0$ in (\ref{U})-(\ref{w}) we see that 
\begin{align*}
U_{n,k}(z,0)& =
\begin{pmatrix}
e^{|z|^{2}} & \sqrt{(n+1)_{k}}H_{n+k,n} \\ 
\sqrt{(n+1)_{k}}H_{n+k,n}^{\ast } & (n+1)_{k}
\end{pmatrix}
\\
\frac{V_{n,k}(z,0)}{\sqrt{(n+1)_{k}}}& =
\begin{pmatrix}
0 & 0 & \sqrt{n+k+1}H_{n+k+1,n} & -\sqrt{n+k}H_{n+k-1,n} \\ 
\sqrt{n+1}H_{n+k,n+1}^{\ast } & -\sqrt{n}H_{n+k,n-1}^{\ast } & 0 & 0
\end{pmatrix}
\end{align*}
\begin{align*}
\lefteqn{W_{n,k}(z,0)} \\
&= 
\begin{pmatrix}
(n+1)e^{|z|^{2}} & 0 & \sqrt{(n+1)}\sqrt{(n+1)_{k+1}}H_{n+k+1,n+1} & -\sqrt{%
(n+1)(n+k)(n+1)_{k}}H_{n+k-1,n+1} \\ 
0 & ne^{|z|^{2}} & -\sqrt{(n)_{k+2}}H_{n+k+1,n-1} & \sqrt{(n)_{k+1}(n+k)}%
H_{n+k-1,n-1} \\* 
& \ast & (n+1)_{k+1} & 0 \\* 
& \ast & 0 & (n+1)_{k}(n+k)%
\end{pmatrix}%
,
\end{align*}
where $H_{k}^{\ast
 }(z,\zbar)=\overline{H_{k}(z,\zbar)}$. 
By translation covariance and rotational invariance, it is enough to consider $(z,w)=(r,0)$.
Set $z=r$ and, as in the proof of Theorem~\ref{Main}, define 
\begin{equation*}
\Lambda_{n,k}(r)=W_{n,k}(r)-V_{n,k}(r)^{\ast }U_{n,k}(r)^{-1}V_{n,k}(r)
\end{equation*}
so that
\begin{equation*}
\rho _{n,n+k}^{(2)}(r,0)=\frac{1}{\det \pi U_{n,k}(r)}\int_{\C
^{4}}\left\vert |\xi _{1}|^{2}-|\xi _{2}|^{2}\right\vert \cdot \left\vert
|\xi _{3}|^{2}-|\xi _{4}|^{2}\right\vert \frac{e^{-\left\langle \Lambda
_{n,k}^{-1}(r)\xi ,\xi \right\rangle }}{\det \pi \Lambda _{n,k}(r)}dV_{\xi }.
\end{equation*}
We rewrite this using the matrices
\begin{equation*}
P=
\begin{bmatrix}
1 & 0 & 0 & 0 \\ 
0 & -1 & 0 & 0 \\ 
0 & 0 & 0 & 0 \\ 
0 & 0 & 0 & 0
\end{bmatrix}
,\quad 
Q=
\begin{bmatrix}
0 & 0 & 0 & 0 \\ 
0 & 0 & 0 & 0 \\ 
0 & 0 & 1 & 0 \\ 
0 & 0 & 0 & -1
\end{bmatrix}
\end{equation*}
and the change of variable $\eta =\Lambda _{n,k}^{-\frac{1}{2}}(r)\xi $\ to
obtain 
\begin{equation*}
\rho_{n,n+k}^{(2)}(r,0)
=\frac{1}{\det \pi U_{n,k}(r)}\int_{\C^4} 
\Big| \langle \eta,
 \Lambda_{n,k}^{1/2}P\Lambda_{n,k}^{1/2}\eta \rangle \cdot
 \langle \eta, \Lambda _{n,k}^{1/2}Q\Lambda _{n,k}^{1/2}\eta
 \rangle\Big| \frac{e^{-|\eta |^{2}}}{\pi ^{4}}dV_{\eta} . 
\end{equation*}
The off-diagonal Hermite terms have at most polynomial growth in $r$, whereas the leading
diagonal terms are of size $e^{r^2}$. Consequently,
$\det U_{n,k}(r,0)\sim (n+1)_k e^{r^2}$,
and the conditional covariance matrix becomes asymptotically
 diagonal. Then we obtain 
\begin{align*}
\lefteqn{\rho_{n,n+k}^{(2)}(r,0)} \\  
& \sim \frac{1}{\pi
 ^{2}(n+1)_{k}e^{r^{2}}}\int_{\C^{4}}\big| 
(n+1)e^{r^{2}}|\eta _{1}|^{2}-ne^{r^{2}}|\eta _{2}|^{2}\big|\cdot
\big| (n+1)_{k+1}|\eta _{3}|^{2}-(n+1)_{k}(n+k)|\eta
_{4}|^{2}\big| \frac{e^{-|\eta |^{2}}}{\pi ^{4}}dV_{\eta } \\
& =\frac{1}{\pi ^{2}}\int_{\C^{4}}\left\vert (n+1)|\eta
_{1}|^{2}-n|\eta _{2}|^{2}\right\vert \left\vert (n+k+1)|\eta
_{3}|^{2}-(n+k)|\eta _{4}|^{2}\right\vert \frac{e^{-|\eta |^{2}}}{\pi ^{4}}%
dV_{\eta } \\
& =\frac{1}{\pi ^{2}}\E\left[ |(n+1)|Z_{1}|^{2}-n|Z_{2}|^{2}|\cdot
|(n+k+1)|Z_{3}|^{2}-(n+k)|Z_{4}|^{2}|\right] \\
&=\frac{1}{\pi ^{2}}\frac{(n+1)^{2}+n^{2}}{(n+1)+n}\cdot 
\frac{(n+k+1)^{2}+(n+k)^{2}}{(n+k+1)+(n+k)} \\
&=\rho _{n}^{(1)}(z)\rho _{n+k}^{(1)}(w) 
\end{align*}
by Lemma~\ref{lem:formula1} applied with $a=n+1$, 
$b=n$, $c=n+k+1$, $d=n+k$. 
\end{proof}


\section{Proof of Theorem~\protect\ref{thm:functionalLLN}}\label{sec:proof34}
\subsection{Pointwise law of large numbers and central limit
theorem}\label{subsec:pointwise}

We first compute the covariance structure of $\{f_n(z,\bar
z)\}_{n = 0}^{\infty}$. 
\begin{lemma}\label{lem:covariance_z_and_w} 
For $n, n' \in \N_0$ and $z,w \in \C$, 
\begin{equation}
\E \left[ f_{n}(z,\bar{z})\overline{f_{n^{\prime }}(w,\wbar)}
\right] =(-1)^{n^{\prime }+n}H_{n^{\prime
},n}(w-z,\wbar-\zbar) e^{z\wbar}. 
\label{eq:different_index}
\end{equation}
In particular, $\{e^{-|z|^{2}/2}f_{k}(z,\bar{z})\}_{k=0}^{\infty }$ are
i.i.d. complex Gaussian random variables with zero mean and
unit variance. 
\end{lemma}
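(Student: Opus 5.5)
The plan is to compute the cross-covariance through the displacement-operator structure of Lemma~\ref{lem:displacement-matrix}, rather than from Laguerre formulas. The Hermite expansion \eqref{FnHermite} gives $X_n(z)=\sum_k \zeta_k U(z)_{k,n}$. Since the $\zeta_k$ are i.i.d.\ standard complex normal,
\[
\E\bigl[X_n(z)\overline{X_{n'}(w)}\bigr]=\sum_{k\ge0}U(z)_{k,n}\overline{U(w)_{k,n'}}=\bigl(U(w)^*U(z)\bigr)_{n',n}=\langle e_{n'},D(w)^*D(z)e_n\rangle .
\]
The sum converges absolutely because the columns of $U$ are in $\ell^2$.

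Next I use the Weyl relation from the proof of Lemma~\ref{lem:translation-invariance}. It gives $D(w)^*D(z)=D(-w)D(z)=e^{-\frac12(-w\zbar+\wbar z)}D(z-w)$. Hence
\[
\E\bigl[X_n(z)\overline{X_{n'}(w)}\bigr]=e^{\frac12(w\zbar-\wbar z)}\,e^{-|z-w|^2/2}H_{n',n}(z-w,\zbar-\wbar).
\]
Multiplying by $e^{(|z|^2+|w|^2)/2}$ and using $|z-w|^2=|z|^2+|w|^2-z\wbar-\zbar w$, the exponential collapses to $e^{z\wbar}$. This leaves $H_{n',n}(z-w,\zbar-\wbar)e^{z\wbar}$.

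Finally I convert to the stated argument $w-z$ by the parity $H_{m,n}(-u,-\bar u)=(-1)^{m+n}H_{m,n}(u,\bar u)$. This follows from \eqref{ComplexHermite}: each branch is a monomial of degree $|m-n|$ in $u$ or $\bar u$ times a function of $|u|^2$, and $(-1)^{|m-n|}=(-1)^{m+n}$. That yields \eqref{eq:different_index}.

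The main obstacle is bookkeeping of conventions: the sign/phase in the Weyl relation and which index of $H$ is the row. I will cross-check with the case $w=0$ computed earlier in the paper, $s_{n,l}(z,0)=H_{l,n}(z,\zbar)$, and with $n=n'$, where the formula must reduce to $L_n(|z-w|^2)e^{z\wbar}$. I will also check $n=n'=0$ against $e^{z\wbar}$. If a phase discrepancy appears, I would instead derive the formula directly: apply $\frac{1}{\sqrt{n!n'!}}(\nablazup)^n(\nablawbarup)^{n'}$ to $e^{z\wbar}$. The identity $(\nablazup)e^{z\wbar}g=e^{z\wbar}(\partial_z+\wbar-\zbar)g$ shows this equals $e^{z\wbar}$ times a polynomial in $u=w-z$ that matches $H$ via generating functions.

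For the ``in particular'', set $w=z$. Then $H_{n',n}(0,0)=\delta_{nn'}$, so $\E[X_n(z)\overline{X_{n'}(z)}]=\delta_{nn'}$. The pseudo-covariance $\E[X_nX_{n'}]$ vanishes because each $X_n(z)$ is a linear combination of the circularly symmetric $\zeta_k$. Hence the vector $(X_n(z))_n$ is a centered circular complex Gaussian with identity covariance, and uncorrelatedness gives independence, so the variables are i.i.d.\ $N_\C(0,1)$.
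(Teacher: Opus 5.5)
Your strategy is correct and differs from the paper's, but the Weyl-phase step is wrong as written, and a second slip hides the error. From the relation $D(z_0+z)=e^{-\frac12(z_0\zbar-\bar z_0 z)}D(z_0)D(z)$ with $z_0=-w$, one gets
\[
D(w)^*D(z)=D(-w)D(z)=e^{\frac12(z\wbar-\zbar w)}D(z-w),
\]
which is the reciprocal of the phase you wrote.

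With your phase $e^{\frac12(w\zbar-\wbar z)}$, the factor $e^{(|z|^2+|w|^2)/2}e^{-|z-w|^2/2}=e^{\frac12(z\wbar+\zbar w)}$ combines to $e^{\zbar w}$, not $e^{z\wbar}$. For example, with $n=n'=0$ your intermediate display would give $\E[f_0(z)\overline{f_0(w)}]=e^{\zbar w}$ instead of $e^{z\wbar}$. Your $w=0$ check cannot detect this, because the phase is trivial there; the $n=n'$ checks do detect it.

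With the correct phase the exponent is $\frac12(z\wbar+\zbar w)+\frac12(z\wbar-\zbar w)=z\wbar$. This gives $\E[f_n(z)\overline{f_{n'}(w)}]=H_{n',n}(z-w,\zbar-\wbar)e^{z\wbar}$, and your parity argument then yields the stated formula. The remaining steps are correct: the identification $\sum_k U(z)_{k,n}\overline{U(w)_{k,n'}}=\langle e_{n'},D(w)^*D(z)e_n\rangle$, the parity $(-1)^{|m-n|}=(-1)^{m+n}$, and the independence argument.

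The paper proceeds differently. It writes the covariance as $\sum_k H_{k,n}(z,\zbar)\overline{H_{k,n'}(w,\wbar)}$ and swaps indices via $H_{p,q}(z,\zbar)=(-1)^{p-q}H_{q,p}(\zbar,z)$. It then quotes Ghanmi's addition formula $\sum_k \tilde H_{l,k}(z,\zbar)\overline{\tilde H_{l',k}(w,\wbar)}/k!=\tilde H_{l,l'}(z-w,\zbar-\wbar)e^{\zbar w}$.

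Your argument in effect proves that addition formula from the group law of the displacement operators: $U(w)^*U(z)$ is a phase times $U(z-w)$. This keeps the proof inside the paper's own framework, namely Lemma~\ref{lem:displacement-matrix} and the Weyl relation already used for Lemma~\ref{lem:translation-invariance}. It needs no external special-function identity, and it makes the later identity $|\E[X_n(z)\overline{X_m(w)}]|^2=\P(M_{n,w-z}=m)$ transparent. The price is exactly the phase bookkeeping above, which the paper offloads onto the normalization of the cited formula.

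Your treatment of the ``in particular'' part is also more complete than the paper's, which only checks $\E[X_n(z)\overline{X_{n'}(z)}]=\delta_{nn'}$. Independence additionally needs the vanishing pseudo-covariance $\E[X_n(z)X_{n'}(z)]=0$, which you supply.
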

\begin{proof}
Write $\tilde{H}_{p,q}(z,\zbar)$ for the complex Hermite polynomial
in \cite{Ghanmi}. The relationship between $\tilde{H}_{p,q}(z,\zbar)$
and the $H_{p,q}(z,\zbar)$ that we have been using in this paper is
given as 
\begin{equation*}
H_{p,q}(z,\zbar)=\frac{1}{\sqrt{p!q!}}\tilde{H}_{p,q}(z,\zbar).
\end{equation*}
We note that 
\begin{equation*}
H_{p,q}(z,\zbar)=(-1)^{p-q}H_{q,p}(\zbar,z). 
\label{eq:Hsymmetric}
\end{equation*}
From \cite[Proposition 3.7]{Ghanmi}, 
\begin{equation*}
\sum_{k=0}^{\infty
 }\frac{\tilde{H}_{l,k}(z,\zbar)\overline{\tilde{H}_{l^{\prime},k}(w,\wbar)}}{k!}=\tilde{H}_{l,l^{\prime}}(z-w,
 \zbar-\wbar)e^{\zbar w}=\sqrt{l!l^{\prime}!} 
H_{l,l^{\prime}}(z-w,\zbar-\wbar)e^{\zbar w}. 
\end{equation*}
Then, 
\begin{align*}
\E \left[ f_{n}(z,\bar{z})\overline{f_{n^{\prime}}(w,\wbar)}
\right] 
&= \sum_{k=0}^{\infty}H_{k,n}(z,\bar{z})\overline{H_{k,n^{\prime}}(w,\wbar)} \\  
&= \sum_{k=0}^{\infty}(-1)^{k-n}H_{n,k}(\zbar,z) \cdot 
\overline{(-1)^{k-n^{\prime}} H_{n^{\prime},k}(\wbar,w)} \\
& =\sum_{k=0}^{\infty }(-1)^{n^{\prime}+n}H_{n^{\prime},k}(w,\wbar)
\overline{H_{n,k}(z,\bar{z})}  \\ 
& =\frac{(-1)^{n^{\prime }+n}}{\sqrt{n!(n^{\prime
 })!}}\sum_{k=0}^{\infty } 
\frac{\tilde{H}_{n^{\prime
 },k}(w,\wbar)\overline{\tilde{H}_{n,k}(z,\bar{z})}}{k!}
 \nonumber\\ 
& =(-1)^{n^{\prime }+n}H_{n^{\prime
 },n}(w-z,\wbar-\zbar)e^{\wbar z}. 
\end{align*}
Since $H_{p,q}(0,0)=\delta_{p,q}$, we conclude that 
\begin{equation*}
\E\left[ e^{-|z|^{2}}f_{n}(z,\bar{z})\overline{f_{n^{\prime}}(z,
\zbar)}\right] =\delta_{n^{\prime},n}. 
\end{equation*}
This completes the proof. 
\end{proof}

By \eqref{lem:covariance_z_and_w}, for fixed $z\in \C$, 
$\big\{ \big|
e^{-|z|^{2}/2}f_{k}(z,\zbar)\big|^{2} \big\}_{k=0}^{\infty}$ 
are i.i.d. with $| e^{-|z|^{2}/2}f_{k}(z,\bar{z})|^{2}\sim 
\mathrm{Exp}(1)$. 
By the strong law of large numbers, for each $z \in \C$, 
\begin{equation*}
\frac{1}{N} \sum_{k=0}^{N-1} | e^{-|z|^{2}/2}f_{k}(z,\zbar)
 |^{2} \rightarrow 1\text{ a.s. }
\end{equation*}
Moreover, by the classical central limit theorem, 
\begin{equation*}
\frac{\sum_{k=0}^{N-1} 
\big(| e^{-|z|^{2}/2}f_{k}(z,\bar{z}) |^{2}-1\big)}{\sqrt{N}}
\inlawto N(0,1). 
\end{equation*}

\subsection{Functional law of large numbers} 

We observed the pointwise law of large numbers, which implies the
convergence in the finite-dimensional distribution to the 
constant process $1$ on $\C$. 
We now strengthen the pointwise almost sure convergence to
almost sure convergence on compact sets. 
In fact, the argument yields a stronger statement than
Theorem~\ref{thm:functionalLLN}, namely convergence in
$C^{0,\alpha}(K)$ (see the definition below) for every compact set $K \subset \C$. 
The key point of the proof is to obtain a
uniform $L^p$-estimate on the $\alpha$-H\"older seminorm of $S_N$. 
To this end, we first recall a Morrey-type estimate for
H\"older seminorms. 

For a fixed compact set $K \subset \R^d$ and $\alpha \in
(0,1)$,  
\[
C^{0,\alpha}(K)
:=
\left\{
u\in C(K): [u]_{C^{0, \alpha}(K)}<\infty
\right\}.
\]
where 
\[
[u]_{C^{0,\alpha}(K)}
:=
\sup_{\substack{z,w\in K\\ z\neq w}}
\frac{|u(z)-u(w)|}{|z-w|^{\alpha}}.
\]
Then $C^{0,\alpha}(K)$ is a Banach space of H\"older continuous
functions on $K$ with norm
\[
\|u\|_{C^{0,\alpha}(K)}
:=
\|u\|_{L^\infty(K)}+[u]_{C^{0,\alpha}(K)}.
\]
We will use the following consequence of Morrey's
inequality \cite[Corollary~9.14]{Brezis}. 
Take a bounded Lipschitz domain $U \subset \R^d$ such that
$K\subset U$. 
Choose $p>d$ so that $\alpha < 1-d/p$, 
and set $\beta = 1-d/p \in (\alpha, 1)$. 
By Morrey's inequality on bounded $C^1$ domains, 
\[
[u]_{C^{0,\beta}(K)} \le [u]_{C^{0,\beta}(\overline{U})} \le C
\|u\|_{W^{1,p}(U)}. 
\]
Applying this to $u-u_U$, where $u_U:=|U|^{-1}\int_U u$, and
using the Poincar\'e inequality, we obtain 
\begin{equation}
[u]_{C^{0,\beta}(K)}
\le
C_U \|\nabla u\|_{L^p(U)}.
\label{eq:Morrey} 
\end{equation}
Since $\alpha < \beta$, 
\begin{equation*}
[u]_{C^{0,\alpha}(K)} \le (\mathrm{diam} K)^{\beta-\alpha}[u]_{C^{0,\beta}(K)}
\le C\|u\|_{W^{1,p}(U)}. 
\end{equation*}
\begin{lemma}\label{lem:holder}
Let $K \subset \C$ be compact and $0 < \alpha < 1$. Then there
 exist $p>2$ and a constant $C_{K,\alpha}>0$ such that 
\[
\E\left[[S_N]_{C^{0,\alpha}(K)}^p\right] \le \frac{C_{K,\alpha}}{N^{p/2}}. 
\]
\end{lemma}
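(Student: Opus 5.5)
The plan is to combine the Morrey-type estimate \eqref{eq:Morrey} with a moment bound on the gradient of $S_N$. Fix a bounded Lipschitz domain $U\supset K$, choose $p>2$ with $\alpha<1-2/p$, and use
\[
[S_N]_{C^{0,\alpha}(K)}\le C\|\nabla S_N\|_{L^p(U)} .
\]
Since $\E\|\nabla S_N\|_{L^p(U)}^p=\int_U \E|\nabla S_N(z)|^p\,dz$ by Fubini, it suffices to show that
\[
\sup_{z\in \overline U}\E|\partial_z S_N(z)|^p\le C N^{-p/2},
\]
with the same bound for $\partial_{\zbar}S_N=\overline{\partial_z S_N}$ (recall $S_N$ is real).

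The first step is to compute $\partial_z |X_n|^2$ using the raising and lowering relations \eqref{eq:lowering-raizing}. Write $|X_n|^2=e^{-|z|^2}f_n\overline{f_n}$. Then
\[
\partial_z\bigl(e^{-|z|^2}f_n\overline{f_n}\bigr)=e^{-|z|^2}\bigl[(\partial_z-\zbar)f_n\,\overline{f_n}+f_n\,\partial_z\overline{f_n}\bigr],
\]
and $\partial_z\overline{f_n}=\overline{\partial_{\zbar}f_n}=-\sqrt n\,\overline{f_{n-1}}$. This gives
\[
\partial_z|X_n|^2=\sqrt{n+1}\,X_{n+1}\overline{X_n}-\sqrt n\,X_n\overline{X_{n-1}} .
\]
The sum over $n$ telescopes, so
\[
\partial_z S_N(z)=\frac{\sqrt N}{N}\,X_N(z)\overline{X_{N-1}(z)}=\frac{1}{\sqrt N}X_N\overline{X_{N-1}} .
\]
This identity is the key point, and the only place where the structure of the problem enters. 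Without it, a naive bound on the $N$-term sum would lose badly, since the individual gradients are of size $\sqrt n$.

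The second step is the moment bound. By Lemma~\ref{lem:covariance_z_and_w}, for each fixed $z$ the variables $X_N(z)$ and $X_{N-1}(z)$ are independent standard complex Gaussians. Hence
\[
\E|\partial_zS_N(z)|^p=N^{-p/2}\bigl(\E|Z|^p\bigr)^2=N^{-p/2}\,\Gamma(1+p/2)^2 ,
\]
uniformly in $z$. Integrating over $U$ and combining with the Morrey step gives the lemma with $C_{K,\alpha}=C^p\,|U|\,\Gamma(1+p/2)^2$, up to the factor from both Wirtinger derivatives.

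The main obstacle, mild as it is, lies in the functional-analytic side. We must check that the Morrey/Poincar\'e estimate \eqref{eq:Morrey} applies samplewise; this is fine because $S_N$ is real-analytic, hence in $W^{1,p}(U)$. We must also ensure that $\nabla$ in $\R^2$ is controlled by $|\partial_z S_N|$: since $S_N$ is real, $|\nabla S_N|=2|\partial_z S_N|$. Everything else is routine.
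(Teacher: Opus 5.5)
Your proposal is correct and follows the paper's proof essentially step for step. You use the telescoping identity $\partial_z S_N=N^{-1/2}X_N\overline{X_{N-1}}$ from the raising/lowering relations, the independence of $X_N(z)$ and $X_{N-1}(z)$ to get the $z$-uniform bound $\E|\nabla S_N(z)|^p=2^p\Gamma(1+p/2)^2N^{-p/2}$, and the Morrey--Poincar\'e estimate \eqref{eq:Morrey} integrated over $U$ via Fubini, which is exactly the paper's route (the paper computes $\partial_z X_n$ and $\partial_{\zbar}X_n$ first rather than differentiating $e^{-|z|^2}f_n\overline{f_n}$ directly, a purely cosmetic difference).
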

\begin{proof}
Recall that
\[
X_n(z):=e^{-|z|^2/2}f_n(z,\bar z),
\qquad
S_N(z):=\frac1N\sum_{n=0}^{N-1}|X_n(z)|^2.
\]
By \eqref{eq:lowering-raizing}, 
\[
\partial_z X_n
=
-\frac{\bar z}{2}X_n+e^{-|z|^2/2}\partial_z f_n
=
-\frac{\bar z}{2}X_n+e^{-|z|^2/2}(\nablazup f_n+\bar z f_n)
=
\sqrt{n+1}\,X_{n+1}+\frac{\bar z}{2}X_n,
\]
and similarly
\[
\partial_{\bar z}X_n
=
-\sqrt n\,X_{n-1}-\frac{z}{2} X_n.
\]
Then, 
\[
\partial_z |X_n|^2
=
(\partial_z X_n)\overline{X_n}
+
X_n\,\overline{\partial_{\bar z}X_n}
=
\sqrt{n+1}\,X_{n+1}\overline{X_n}
-
\sqrt n\,X_n\overline{X_{n-1}}. 
\]
Summing over $n=0,1, \dots,N-1$, we obtain the telescoping identities
\begin{equation}
\partial_z S_N(z)
=
\frac{1}{N} \sum_{n=0}^{N-1}\partial_z |X_n(z)|^2
=
\frac{\sqrt N}{N}X_N(z)\overline{X_{N-1}(z)}.  
\label{eq:telescoping} 
\end{equation}
Since $\overline{\partial_z S_N(z)} = \partial_{\zbar} S_N(z)$, 
\[
|\nabla S_N(z)|^2
=
2\bigl(|\partial_z S_N(z)|^2+|\partial_{\bar z}S_N(z)|^2\bigr)
=
\frac{4}{N} |X_N(z)|^2 \cdot |X_{N-1}(z)|^2.
\]
For each fixed $z\in\C$, the random variables
$X_N(z)$ and $X_{N-1}(z)$ are independent standard complex Gaussian. Since
$\E|Z|^p=\Gamma(1+\frac{p}{2})$ for $Z\sim N_{\C}(0,1)$, it follows that
\[
\E |\nabla S_N(z)|^p
=
\frac{2^p}{N^{p/2}}  
\E|X_N(z)|^p \cdot \E|X_{N-1}(z)|^p
=
\frac{C_p}{N^{p/2}},   
\]
where $C_p = 2^p \Gamma(1+\frac{p}{2})^2$. 
Since $S_N$ is smooth, by \eqref{eq:Morrey}, 
\[
[S_N]_{C^{0,\alpha}(K)}
\le
C_{U,K,\alpha,p} \|\nabla S_N\|_{L^p(U)}, 
\]
and hence 
\[
\E\left[ [S_N]_{C^{0,\alpha}(K)}^p \right]
\le
C_{U,K,\alpha,p}^p \,\E\left[\|\nabla S_N\|_{L^p(U)}^p \right]
=
C_{U,K,\alpha,p}^p \,\int_U \E|\nabla S_N(z)|^p dz 
= \frac{C_{K,\alpha}}{N^{p/2}}. 
\]
This proves the assertion. 
\end{proof}

As mentioned in Section~\ref{sec:main}, we can prove a stronger version of
Theorem~\ref{thm:functionalLLN} 
\begin{theorem}\label{thm:holder}
Let
\[
 S_N(z):=\frac1N\sum_{n=0}^{N-1}\big| e^{-|z|^2/2}f_n(z,\bar
 z) \big|^2, \qquad z\in\C.
\]
Then, for every compact $K\subset\C$ and every
 $0<\alpha <1$, 
\[
S_N \to 1 \quad \text{a.s. in $C^{0,\alpha}(K)$}. 
\]
\end{theorem}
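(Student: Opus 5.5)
We need both the sup norm and the Hölder seminorm of $S_N-1$ on $K$ to tend to $0$ almost surely. The plan has three steps: a Borel--Cantelli argument along a subsequence, control of the oscillation between subsequence terms, and a pointwise law of large numbers to pin down the level.

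\textbf{Step 1 (subsequence).} Since constants have zero seminorm, $[S_N-1]_{C^{0,\alpha}(K)}=[S_N]_{C^{0,\alpha}(K)}$. Lemma~\ref{lem:holder} gives, for some $p>2$,
\[
\E\bigl[[S_N]^p_{C^{0,\alpha}(K)}\bigr]\le C N^{-p/2}.
\]
Chebyshev's inequality then yields $\P([S_N]_{C^{0,\alpha}(K)}>\varepsilon)\le C\varepsilon^{-p}N^{-p/2}$. This is summable in $N$ because $p/2>1$, so Borel--Cantelli gives $[S_N]_{C^{0,\alpha}(K)}\to0$ almost surely along the full sequence. No subsequence is actually needed here.

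\textbf{Step 2 (oscillation).} Because $p>2$ is available, the full-sequence summability above already handles the oscillation between terms. If the constant in Lemma~\ref{lem:holder} only worked for $p$ close to $2$, one could still pass to a sparse subsequence and interpolate using the monotone structure $NS_N=\sum_{n<N}|X_n|^2$. I expect this to be unnecessary.

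\textbf{Step 3 (sup norm).} Fix a point $z_0\in K$; if $K$ is not connected, fix one point in each component of a finite covering by disks. For $z\in K$ we have
\[
\sup_{K}|S_N-1|\le |S_N(z_0)-1|+[S_N]_{C^{0,\alpha}(K)}\,(\mathrm{diam}\,K)^{\alpha}.
\]
This estimate holds for any pair of points regardless of connectedness, since the Hölder seminorm is a supremum over all pairs in $K$. The first term tends to $0$ almost surely by the pointwise strong law \eqref{eq:1}, applied at the single point $z_0$. The second term tends to $0$ almost surely by Step~1. Combining the two shows $\|S_N-1\|_{C^{0,\alpha}(K)}\to0$ almost surely.

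The only delicate point is the requirement $p/2>1$ in the Borel--Cantelli step. This needs $p>2$, which Lemma~\ref{lem:holder} guarantees: Morrey's inequality in dimension $d=2$ forces $p>2$, and one may take $p$ as large as desired so that $\alpha<1-2/p$. Choosing such a $p$ makes the tail bound summable.
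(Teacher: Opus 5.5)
Your proposal is correct and is essentially the paper's own proof. Markov's inequality applied to the $p$-th moment bound of Lemma~\ref{lem:holder} with $p>2$, followed by Borel--Cantelli, gives $[S_N]_{C^{0,\alpha}(K)}\to 0$ a.s.; the pointwise strong law at a single $z_0\in K$ together with $\sup_K|S_N-1|\le |S_N(z_0)-1|+(\operatorname{diam}K)^{\alpha}[S_N]_{C^{0,\alpha}(K)}$ then handles the sup norm, and, as you yourself note, your Step~2 and the detour about connectedness are unnecessary.
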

\begin{proof}
Fix a compact set $K\subset\C$, $0<\alpha<1$, and choose $z_0\in K$.
By \eqref{eq:1}, we have that $S_N(z_0)\to 1$ a.s. 
On the other hand, Lemma~\ref{lem:holder} implies that for every $\varepsilon>0$,
$\sum_{N=1}^\infty 
\mathbb{P}\!\left([S_N]_{C^{0,\alpha}(K)}>\varepsilon\right) <
\infty$ since $p>2$. 
Hence, by the Borel--Cantelli lemma, we conclude that
$[S_N]_{C^{0,\alpha}(K)} \to 0$ almost surely. 
For every $z\in K$,
\[
|S_N(z)-1|
\le
|S_N(z)-S_N(z_0)| + |S_N(z_0)-1|
\le
[S_N]_{C^{0,\alpha}(K)}\,|z-z_0|^{\alpha} + |S_N(z_0)-1|.
\]
Taking the supremum over $z\in K$, we get
\[
\sup_{z\in K}|S_N(z)-1|
\le
(\operatorname{diam}K)^{\alpha}[S_N]_{C^{0,\alpha}(K)}
+
|S_N(z_0)-1|.
\]
The right-hand side tends to $0$ almost surely, and hence
\[
\|S_N-1\|_{C(K)}\to 0
\quad\text{a.s.} 
\]
Since also $[S_N-1]_{C^{0,\alpha}(K)}=[S_N]_{C^{0,\alpha}(K)}\to
 0$ a.s., 
we obtain
\[
\|S_N-1\|_{C^{0,\alpha}(K)}\to 0
\quad\text{a.s.} 
\]
This proves the theorem.
\end{proof}

\subsection{Remark on fluctuations of $S_N$}\label{subsec:flucuations}
Since \eqref{eq:2} yields a pointwise central limit
 theorem for each fixed 
$z \in \C$, it is natural to ask whether the fluctuation fields
\begin{equation*}
G_N(z):=\sqrt{N} \bigl(S_N(z)-1\bigr) = \frac{1}{\sqrt{N}} 
\sum_{n=0}^{N-1} (|X_n(z)|^2-1) 
\label{eq:GNz} 
\end{equation*}
also satisfy a functional central limit theorem on
 compact sets. 
However, this question is rather
 delicate. Indeed, from \eqref{eq:telescoping}, we see that 
\[
\partial_z G_N(z)=X_N(z)\overline{X_{N-1}(z)},
\]
so the spatial fluctuations of $G_N$ are governed by a
 boundary term involving only the last two levels rather
 than by the same averaging mechanism that yields the
 pointwise central limit theorem. 
This is already reflected in the finite-dimensional
 distributions. 
Indeed, one can show that 
for every fixed $k \in \N$ and $z_1, z_2,\dots,z_k \in \C$, 
\begin{equation*}
\bigl(G_N(z_1),G_N(z_2), \dots, G_N(z_k) \bigr) \inlawto (Z,Z,\dots,Z),
\qquad Z\sim N(0,1). 
\label{eq:degenerated_convergence}
\end{equation*}
Thus, at fixed macroscopic points, the fluctuations collapse
 onto the diagonal, instead of converging to a nondegenerate
 Gaussian process with spatially varying covariance. This
 suggests that any nontrivial functional central limit
 theorem should be sought only after
 spatial smoothing or spatial rescaling. 
We show a functional CLT after spatial rescaling in the next section. 

\section{Functional central limit theorem}\label{sec:FCLT}
\subsection{Limiting distribution of $M_{n,z}$} 

We first identify the one-parameter limits associated with the random variables
$M_{n,z}$ defined in Section~\ref{subsec:orthonormal}.

\begin{proposition}\label{prop:arcsine-limit}
Let $n_N$ be a sequence of integers such that $n_N/N \to t \in (0,1]$.
For fixed $\xi \in \C$, 
\[
Y_{N,\xi}
:=
\frac{M_{n_N,\sqrt{N}\,\xi} - n_N - N|\xi|^2}{N} 
\inlawto A_{t,|\xi|}, 
\]
where $A_{t,r}$ is a random variable with arcsine
 distribution on $[-2\sqrt{t}\,r, 2\sqrt{t}\,r]$, whose density is given
 by 
$\pi^{-1} (4tr^2-y^2)^{-1/2} \one_{\{|y|<2\sqrt{t} r\}} dy$;
if $|\xi|=0$,  $A_{t,0}$ is understood as the degenerated random variable $0$
\end{proposition}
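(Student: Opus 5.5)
We will prove the statement by the method of moments, using the operator description of the law of $M_{n,z}$ from Remark~\ref{rem:spectral-measure-Mnz} and Lemma~\ref{lem:functional_calculus}. By \eqref{eq:DND}, the law of $M_{n,z}-n-|z|^2$ is the spectral measure of $D(z)^*\num D(z)-n-|z|^2=\num-n+T_z$ in the state $e_n$, where $T_z=za^*+\zbar a$. Hence, with $z=\sqrt N\xi$ and $n=n_N$,
\[
\E\bigl[Y_{N,\xi}^k\bigr]=N^{-k}\,\bigl\langle e_{n_N},\bigl(\num-n_N+\sqrt N(\xi a^*+\bar\xi a)\bigr)^k e_{n_N}\bigr\rangle .
\]
The arcsine law is compactly supported, so it is determined by its moments. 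It therefore suffices to show that, for each fixed $k$, this quantity converges to $\E[A_{t,|\xi|}^k]$, which equals $0$ for odd $k$ and $\binom{k}{k/2}(t|\xi|^2)^{k/2}$ for even $k$.

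\textbf{Step 1 (path expansion).} Expand the $k$-th power as a sum over words in the three operators $\num-n_N$, $\sqrt N\xi a^*$ and $\sqrt N\bar\xi a$. A word contributes only if it returns from level $n_N$ to level $n_N$, that is, if it is a lattice path with $j$ up-steps, $j$ down-steps and $k-2j$ flat steps. Along such a path the current level $m$ stays within $k$ of $n_N$. Each flat step therefore contributes $m-n_N=O(k)$, and each vertical step contributes $\sqrt N|\xi|\sqrt{m+O(1)}=\sqrt N|\xi|\sqrt{n_N}\,(1+O(k/n_N))$. Consequently a path with $2j$ vertical steps has size $O\bigl(N^{j}n_N^{j}\bigr)=O(N^{2j})$, while the prefactor is $N^{-k}$.

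\textbf{Step 2 (leading term).} When $2j<k$ the contribution is $O(N^{2j-k})\to0$. So only the purely vertical paths with $k=2j$ survive. Each of these contributes $|\xi|^{2j}N^{j}n_N^{j}(1+o(1))\,N^{-2j}\to(t|\xi|^2)^j$, and the number of such paths is $\binom{2j}{j}$. This gives exactly the even arcsine moments $\binom{2j}{j}(t r^2)^j$ with $r=|\xi|$, and zero for odd $k$. To confirm that these are the right moments, one can check that the arcsine law on $[-2\sqrt t r,2\sqrt t r]$ is the law of $2\sqrt t r\cos\Theta$ with $\Theta$ uniform, so $\E[(2\cos\Theta)^{2j}]=\binom{2j}{j}$.

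\textbf{Step 3 (degenerate case).} If $\xi=0$, every path containing a vertical step vanishes. The only remaining word is $(\num-n_N)^k$, which gives $0$ at $e_{n_N}$, so the limit is the point mass at $0$. Since $t>0$, there is no issue with $n_N$ being small along the sequence.

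The main obstacle is keeping the error control uniform. The factors $\sqrt{m}$ must be compared with $\sqrt{n_N}$ uniformly over the finitely many paths, and the non-leading paths must be bounded. For fixed $k$ this is elementary because the number of paths is finite ($3^k$), so bookkeeping suffices. Finally, convergence of all moments to those of a compactly supported (hence moment-determined) law yields convergence in distribution.
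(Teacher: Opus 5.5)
Your proof is correct and follows essentially the same route as the paper. Both arguments apply the method of moments to $\langle e_{n_N},\cY_{N,\xi}^k e_{n_N}\rangle$ with $\cY_{N,\xi}=N^{-1}\bigl(\num-n_N+\sqrt N(\xi a^*+\bar\xi a)\bigr)$, use that only a bounded window of levels around $n_N$ matters, and conclude because the arcsine law is compactly supported and hence determined by its moments. The only difference is cosmetic. The paper packages your path expansion as entrywise convergence to the Jacobi operator $J_{t,\xi}$ on $\ell^2(\mathbb Z)$ and identifies its spectral measure at $\delta_0$ by Fourier transform. You instead count the $\binom{2j}{j}$ surviving up/down paths and match them directly with $\E[(2\sqrt t\,r\cos\Theta)^{2j}]$.
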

\begin{proof}
By Remark~\ref{rem:spectral-measure-Mnz}, 
the law of $M_{n,z}$ is the spectral measure of the number
 operator $\num$ in the state $D(z)e_n$. 
We note that the spectral measure of an self-adjoint
 operator $T$ in the state $\psi$, 

By Lemma~\ref{lem:functional_calculus}, 
the law of $Y_{N,\xi}$ is the spectral measure, in the
state $e_{n_N}$, of the self-adjoint operator
\[
\cY_{N,\xi}
:=
\frac{D(\sqrt{N}\,\xi)^* \mathcal N D(\sqrt{N}\,\xi)-n_N-N|\xi|^2}{N}.
\]
From \eqref{eq:DND}, we have  
\[
\cY_{N,\xi}
= \frac{1}{N} \big( \num - n_N + \sqrt{n} (\xi a^* + \overline{\xi} a) \big). 
\]
Put $\delta_j^{(N)} := e_{n_N+j}$.
Using \eqref{eq:creation-annihilation}, we get, for every fixed integer $j$, 
\[
\cY_{N,\xi}\delta_j^{(N)}
=
\frac{j}{N}\delta_j^{(N)}
+
\xi \sqrt{\frac{n_N+j+1}{N}}\,\delta_{j+1}^{(N)}
+
\bar\xi \sqrt{\frac{n_N+j}{N}}\,\delta_{j-1}^{(N)}.
\]
On each finite window
 $\operatorname{Span}\{\delta_j^{(N)}: |j|\le p\}$, 
as $N \to \infty$, 
the matrix of $\cY_{N,\xi}$ converges entrywise to the Jacobi operator
\[
J_{t,\xi}\delta_j
=
\xi \sqrt{t}\,\delta_{j+1}
+
\bar\xi \sqrt{t}\,\delta_{j-1},
\qquad
j\in \mathbb Z.
\]
Write $\xi = |\xi| e^{i\phi}$. Under the Fourier transform
$\mathcal F \delta_j(\theta)=e^{ij\theta}$ on
$L^2([0,2\pi], d\theta/(2\pi))$, one has
\[
\mathcal F J_{t,\xi}\mathcal F^{-1}
=
2|\xi|\sqrt{t}\,\cos(\theta+\phi)\cdot,
\qquad
\mathcal F \delta_0 = 1.
\]
Hence the spectral measure of $J_{t,\xi}$ at $\delta_0$ is the push-forward of
$d\theta/(2\pi)$ under
\[
\theta \mapsto 2|\xi|\sqrt{t}\,\cos(\theta+\phi),
\]
namely the arcsine law above. 
Therefore, for every integer $p\ge 1$,
\[
\E[Y_{N,\xi}^p] = \langle e_{n_N}, \cY_{N,\xi}^p e_{n_N}\rangle
\to
\langle \delta_0, J_{t,\xi}^p \delta_0\rangle 
= \E[A_{t,|\xi|}^p]. 
\]
It follows from this fact that the family $\{|Y_{N,\xi}|^p\}_{N \ge 1}$ is uniformly
 integrable for every $p \ge 1$.  
Hence the family $\{Y_{N,\xi}\}_{N\ge 1}$ is tight, and every subsequential limit has
the same moments as the arcsine law, and since the latter has compact support,
it is moment-determinate. Therefore the full sequence
 converges in law to the arcsine law.
\end{proof}

In the next section, we will use the following lemma on
Ces\`{a}ro means for triangular arrays. It is a direct 
application of Is\'eki's theorem on continuous convergence on compact spaces;
see \cite{I57}. 

\begin{lemma}\label{lem:riemann}
Let $\{a_{n,N}\}$ be a triangular array of complex numbers, where
$N\ge 1$ and $0\le n\le N$. Assume that there exists a function
$f:[0,1]\to \C$ such that for every $t\in [0,1]$ and every sequence
$n_N$ with $0\le n_N\le N$ and $n_N/N\to t$, one has
\[
a_{n_N,N}\to f(t).
\]
Then $f$ is continuous on $[0,1]$ and
\[
\frac{1}{N}\sum_{n=0}^{N-1} a_{n,N}
\to
\int_0^1 f(t)\,dt.
\]
\end{lemma}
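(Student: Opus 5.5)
We derive continuity of $f$ first and then obtain the Ces\`aro limit from a uniform approximation. The whole argument is the standard "continuous convergence implies uniform convergence on compacts" principle (Is\'eki), applied to the step functions
\[
F_N(t):=a_{\lfloor Nt\rfloor\wedge N,\,N},\qquad t\in[0,1].
\]
The hypothesis says exactly that $F_N(t_N)\to f(t)$ whenever $t_N\to t$ along sequences of the form $t_N=n_N/N$. Since $\lfloor Nt_N\rfloor/N\to t$ for any $t_N\to t$, the same convergence holds for arbitrary sequences $t_N\to t$ in $[0,1]$.

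\textbf{Step 1: continuity of $f$.} Let $t_k\to t$. For each $k$ we have $a_{n,N}\to f(t_k)$ along $n/N\to t_k$, so we can choose $N_k$ increasing and $n_k$ with $|n_k/N_k-t_k|<1/k$ and $|a_{n_k,N_k}-f(t_k)|<1/k$. Fill in the indices $N$ between consecutive $N_k$ by taking, say, $n_N=\lfloor Nt\rfloor$. This gives a full sequence $n_N/N\to t$, which by hypothesis satisfies $a_{n_N,N}\to f(t)$. Along the subsequence $N_k$ this yields $f(t_k)\to f(t)$. The only care needed is to make the constructed sequence defined for every $N$ while still containing the chosen subsequence, and the interpolation above handles this.

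\textbf{Step 2: uniform convergence.} We claim
\[
\sup_{0\le n\le N}|a_{n,N}-f(n/N)|\to0 .
\]
If not, there exist $\varepsilon>0$ and infinitely many $N$ with some $n_N$ satisfying $|a_{n_N,N}-f(n_N/N)|\ge\varepsilon$. By compactness of $[0,1]$ we may pass to a further subsequence with $n_N/N\to t$. Again extend this to a full sequence converging to $t$ by setting $n_N=\lfloor Nt\rfloor$ off the subsequence. The hypothesis then gives $a_{n_N,N}\to f(t)$, while continuity of $f$ gives $f(n_N/N)\to f(t)$. This contradicts the lower bound $\varepsilon$.

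\textbf{Step 3: conclusion.} By Step 2,
\[
\Bigl|\frac1N\sum_{n=0}^{N-1}a_{n,N}-\frac1N\sum_{n=0}^{N-1} f(n/N)\Bigr|\le\sup_{0\le n\le N}|a_{n,N}-f(n/N)|\to0 .
\]
Since $f$ is continuous on $[0,1]$, the Riemann sums $\frac1N\sum_{n=0}^{N-1} f(n/N)$ converge to $\int_0^1 f(t)\,dt$.

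The main obstacle is the diagonal and extension bookkeeping in Steps 1 and 2. The hypothesis applies only to full sequences indexed by every $N$, whereas the extraction arguments produce subsequences, so each subsequence must be completed into an admissible full sequence before the hypothesis can be invoked.
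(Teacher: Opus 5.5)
Your proof is correct and follows essentially the same route as the paper, which sets $g_N(t)=a_{\lfloor Nt\rfloor,N}$, observes that $g_N(t_N)\to f(t)$ whenever $t_N\to t$, cites Is\'eki's theorem (continuous convergence on a compact set) to get continuity of $f$ and uniform convergence, and then writes $\frac1N\sum_{n=0}^{N-1}a_{n,N}=\int_0^1 g_N(t)\,dt$. Your Steps 1--2 simply prove the needed instance of Is\'eki's theorem by hand via the subsequence-completion argument, and Step 3 compares with Riemann sums of $f$ instead of integrating $g_N$, which is an equivalent finish.
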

\begin{proof}
For each $N\ge 1$, define
\[
g_N(t) := a_{\lfloor Nt\rfloor,N},
\qquad
t\in [0,1].
\]
If $t_N\to t$, then $\lfloor Nt_N\rfloor/N \to t$, so by assumption
\[
g_N(t_N) \to f(t).
\]
Thus $g_N$ converges continuously to $f$ on the compact set $[0,1]$.
By Is\'eki's theorem, $f$ is continuous and
\[
\sup_{t\in [0,1]} |g_N(t)-f(t)| \to 0.
\]
Consequently,
\[
\frac{1}{N}\sum_{n=0}^{N-1} a_{n,N}
=
\int_0^1 g_N(t)\,dt
\to
\int_0^1 f(t)\,dt.
\]
\end{proof}

\subsection{Convergence of the covariance}

First, by Lemma~\ref{lem:translation-invariance}, 
taking squared moduli of $\{X_n(z)\}_{n\ge 0}$ and then
forming the averages appearing in the 
definition of $\cG_N^{(z_0)}$, we immediately obtain the invariance 
$\cG_N^{(z_0)} \inlaw \cG_N^{(0)}$. 

From now on we write
\begin{equation}
\cG_N(\xi):=
\sqrt{N}(S_N(\sqrt{N} \xi)
-1) = \frac{1}{\sqrt{N}} \sum_{n=0}^{N-1} (|X_n(\sqrt{N}\xi)|^2 -1),
\label{eq:defofGN} 
\end{equation}
which corresponds to the case $z_0=0$.
We prove the following. 
\begin{proposition}\label{prop:covariance-limit}
For every $\xi,\eta \in \C$,
\[
\Cov\bigl(\cG_N(\xi),\cG_N(\eta)\bigr)
\to
\kappa(|\xi-\eta|), 
\]
where $\kappa(r)$ is the same as in Theorem~\ref{thm:functional-clt}. 
\end{proposition}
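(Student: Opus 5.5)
By the Isserlis (Wick) formula for complex Gaussians, $\Cov(|X|^2,|Y|^2)=|\E[X\overline{Y}]|^2$ for any centered jointly complex Gaussian pair. I will use this to rewrite the covariance exactly as a Ces\`aro mean of probabilities involving $M_{n,z}$. Then I pass to the limit with Proposition~\ref{prop:arcsine-limit} and identify the resulting integral with a disk-overlap area.

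\textbf{Step 1: exact formula.} Set $z=\sqrt N\xi$, $w=\sqrt N\eta$. Then
\[
\Cov\bigl(\cG_N(\xi),\cG_N(\eta)\bigr)=\frac1N\sum_{n,m=0}^{N-1}\bigl|\E[X_n(z)\overline{X_m(w)}]\bigr|^2 .
\]
By Lemma~\ref{lem:covariance_z_and_w},
\[
\bigl|\E[X_n(z)\overline{X_m(w)}]\bigr|=e^{-(|z|^2+|w|^2)/2}\,|e^{z\wbar}|\,|H_{m,n}(w-z,\wbar-\zbar)| .
\]
Since $e^{-(|z|^2+|w|^2)/2}|e^{z\wbar}|=e^{-|z-w|^2/2}$, this equals $|U(w-z)_{m,n}|$. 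Hence the summand is $\P(M_{n,w-z}=m)$ by Lemma~\ref{lem:displacement-matrix}. Summing over $m<N$ gives
\[
\Cov\bigl(\cG_N(\xi),\cG_N(\eta)\bigr)=\frac1N\sum_{n=0}^{N-1}\P\bigl(M_{n,\sqrt N\zeta}<N\bigr),\qquad \zeta:=\eta-\xi .
\]

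\textbf{Step 2: pointwise limit of the summands.} Put $r=|\zeta|$ and write $M_{n,\sqrt N\zeta}=n+Nr^2+NY_{N,\zeta}$. Then the event $\{M_{n,\sqrt N\zeta}<N\}$ is $\{n/N+r^2+Y_{N,\zeta}<1\}$.
\begin{itemize}
\item For $n_N/N\to t\in(0,1)$ and $r>0$, Proposition~\ref{prop:arcsine-limit} gives $Y_{N,\zeta}\inlawto A_{t,r}$. The arcsine law has no atoms, so the probability converges to $f(t):=\P(t+r^2+A_{t,r}<1)$.
\item For $t=0$, Lemma~\ref{lem:displacement-matrix2} gives $\Var(Y_{N,\zeta})=(2n_N+1)r^2/N\to0$. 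So $Y_{N,\zeta}\to0$ in probability, and the limit is again $f(0)=\1\{r^2<1\}$ as long as $r\neq1$.
\item For $r=0$ the summand is $\1\{n<N\}=1$, and $\kappa(0)=1$.
\end{itemize}
The remaining exceptional points ($t=1$, and possibly $r=1$) are negligible once we replace Lemma~\ref{lem:riemann} by the bounded-convergence version. Define $g_N(t)=\P\bigl(M_{\lfloor Nt\rfloor,\sqrt N\zeta}<N\bigr)$. Then $g_N(t)\to f(t)$ for a.e.\ $t\in[0,1]$ and $0\le g_N\le1$, so the Ces\`aro mean $\int_0^1 g_N\,dt$ converges to $\int_0^1 f(t)\,dt$.

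\textbf{Step 3: identify the integral.} This computation is the only genuinely geometric step. Represent $A_{t,r}=2\sqrt t\,r\cos\Theta$ with $\Theta$ uniform on $[0,2\pi)$. Then
\[
t+r^2+A_{t,r}=|\sqrt t\,e^{i\Theta}+r|^2 .
\]
Substituting $t=s^2$, so $dt=2s\,ds$, gives
\[
\int_0^1 f(t)\,dt=\int_0^1\!\!\int_0^{2\pi}\1\{|se^{i\theta}+r|<1\}\,\frac{d\theta}{2\pi}\,2s\,ds=\frac1\pi\bigl|B(0,1)\cap B(-r,1)\bigr| .
\]
By reflection symmetry this equals $\kappa(r)$, which is what the proposition claims.

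The main care is needed in Step~2. Proposition~\ref{prop:arcsine-limit} only covers $t\in(0,1]$, and at $t=1$ the limiting event sits on the boundary of the support. This is why I use a.e.\ convergence plus boundedness rather than requiring convergence at every $t$ as in Lemma~\ref{lem:riemann}.
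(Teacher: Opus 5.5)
Your proof is correct and follows the paper's argument essentially step for step. You use Wick's formula to reduce the covariance to $\frac1N\sum_{n<N}\P(M_{n,\sqrt N u}<N)$, take the pointwise arcsine limit from Proposition~\ref{prop:arcsine-limit}, and identify $\int_0^1 q_r(t)\,dt$ as the normalized disk overlap via the uniform point $\sqrt{T}e^{i\Theta}$ on the unit disk. The one difference is how you pass to the Ces\`aro limit. You apply bounded convergence to the step functions $g_N$, which needs convergence only at each fixed $t\in(0,1)$, whereas the paper applies the Is\'eki-type Lemma~\ref{lem:riemann} on $[\delta,1]$ and then lets $\delta\downarrow 0$; your route is slightly simpler and fully valid. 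Incidentally, your worry about $t=1$ is unnecessary: Proposition~\ref{prop:arcsine-limit} includes $t=1$ and the arcsine law has no atoms, though with your argument this point does not matter anyway.
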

\begin{proof}
Since the family $\{X_n(z)\}_{n\ge 0}$ is jointly complex Gaussian,
Wick's formula yields
\[
\Cov\bigl(|X_n(z)|^2-1,\ |X_m(w)|^2-1\bigr)
=
\bigl|
\E\bigl[X_n(z)\overline{X_m(w)}\bigr]
\bigr|^2
=
\P(M_{n,w-z}=m), 
\]
where the second equality follows from \eqref{eq:different_index}.  

Set $u:=\eta-\xi$ and $r:=|u|$. Then, 
\begin{equation}
\Cov\bigl(\cG_N(\xi),\cG_N(\eta)\bigr)
=
\frac{1}{N}
\sum_{n,m=0}^{N-1}
\P\bigl(M_{n,\sqrt{N}\,u}=m\bigr)
=
\frac{1}{N}
\sum_{n=0}^{N-1}
\P\bigl(M_{n,\sqrt{N}\,u}<N\bigr).
\label{eq:covGNandGN} 
\end{equation}
Fix a sequence $n_N$ with $n_N/N\to t\in (0,1]$. 
Since the law of $A_{t,r}$ is absolutely continuous, 
 Proposition~\ref{prop:arcsine-limit} yields 
\[
\P\bigl(M_{n_N,\sqrt{N}\,u}<N\bigr)
=
\P\left(
\frac{M_{n_N,\sqrt{N}\,u}-n_N-Nr^2}{N}
<
1-\frac{n_N}{N}-r^2
\right)
\to
q_r(t),
\]
where
\[
q_r(t):=
\P\bigl(A_{t,r}<1-t-r^2\bigr)
= 
\begin{cases}
1,
&
r+\sqrt{t}\le 1,
\\[1ex]
0,
&
r\ge 1+\sqrt{t},
\\[1ex]
\frac{1}{\pi}
\arccos \frac{t+r^2-1}{2r\sqrt{t}},
&
|r-\sqrt{t}|<1<r+\sqrt{t}.
\end{cases}
\]
For $r=0$ this gives $q_0(t)=1$. Applying
Lemma~\ref{lem:riemann} on each interval $[\delta,1]$ and then letting
$\delta \downarrow 0$, we obtain
\[
\Cov\bigl(\cG_N(\xi),\cG_N(\eta)\bigr)
\to
\int_0^1 q_r(t)\,dt.
\]

It remains to identify the last integral. Let $T$ be uniformly distributed on
$[0,1]$, let $\Theta$ be uniformly distributed on $[0,2\pi]$, and assume that
$T$ and $\Theta$ are independent. Then
$Y:=\sqrt{T}\,e^{i\Theta}$
is uniformly distributed on the unit disk $B(0,1)$ with density $1/\pi$.
For fixed $t\in (0,1]$, the event $Y\in B(re_1,1)$ under the
 condition $\{T=t\}$ is equivalent to
\[
t+r^2-2r\sqrt{t}\cos \Theta < 1,
\]
and therefore
\[
\P\bigl(Y\in B(re_1,1)\mid T=t\bigr)=q_r(t).
\]
Taking expectation in $T$, we conclude that
\[
\int_0^1 q_r(t)\,dt
=
\P\bigl(Y\in B(re_1,1)\bigr)
=
\frac{|B(0,1)\cap B(re_1,1)|}{\pi}
=
\kappa(r).
\]
This proves the proposition.
\end{proof}

\subsection{Convergence of the finite-dimensional distributions}
We next prove the convergence of all finite-dimensional
marginals. 
First we note the following well-known formula. 
\begin{lemma}\label{lem:quadratic-form}
Let $Z$ be a centered complex Gaussian vector in $\C^d$ with covariance matrix
$\Gamma \ge 0$, and let $A\in M_d(\C)$ be Hermitian. Then, for every
$t\in \R$,
\[
\E \exp\bigl(it Z^* A Z\bigr)
=
\det(I-it\Gamma A)^{-1}.
\]
\end{lemma}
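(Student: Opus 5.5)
We first reduce to a diagonal form by whitening. Since $\Gamma\ge 0$ is Hermitian, we write $\Gamma=\Gamma^{1/2}\Gamma^{1/2}$ and realize $Z\inlaw \Gamma^{1/2}W$, where $W$ is a standard complex Gaussian vector in $\C^d$, i.e.\ its entries are i.i.d.\ $N_{\C}(0,1)$. Then
\[
Z^*AZ = W^*\,\Gamma^{1/2}A\Gamma^{1/2}\,W .
\]
The matrix $B:=\Gamma^{1/2}A\Gamma^{1/2}$ is Hermitian, so it admits a unitary diagonalization $B=V^*\diag(\lambda_1,\dots,\lambda_d)V$ with real eigenvalues $\lambda_j$. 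By the unitary invariance of the standard complex Gaussian law, $VW\inlaw W$. Hence
\[
Z^*AZ\inlaw \sum_{j=1}^d \lambda_j |W_j|^2,
\]
and the summands are independent, with $|W_j|^2\sim \mathrm{Exp}(1)$, as already used in the proof of Lemma~\ref{lem:formula1}.

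Next we compute the characteristic function. For $\fe\sim\mathrm{Exp}(1)$ and real $s$ we have $\E[e^{is\fe}]=\int_0^\infty e^{-(1-is)x}\,dx=(1-is)^{-1}$. This integral converges absolutely because $\Re(1-is)=1>0$, so no branch issue arises. Using independence,
\[
\E\exp(itZ^*AZ)=\prod_{j=1}^d (1-it\lambda_j)^{-1}=\det(I-itB)^{-1}.
\]

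Finally we pass back to $\Gamma A$. By Sylvester's determinant identity $\det(I-XY)=\det(I-YX)$, applied with $X=it\Gamma^{1/2}$ and $Y=A\Gamma^{1/2}$, we get
\[
\det(I-it\Gamma^{1/2}A\Gamma^{1/2})=\det(I-it\Gamma A).
\]
This gives the claim. The identity also shows that $\det(I-it\Gamma A)\neq 0$ for all real $t$, since $\Gamma A$ has the same real spectrum as $B$.

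The only point needing care is the singular case $\Gamma\ge 0$ not invertible. The representation $Z\inlaw\Gamma^{1/2}W$ still holds, because both sides are centered complex Gaussian vectors with covariance $\Gamma$ (and vanishing pseudo-covariance, as is implicit for the complex Gaussian vectors in the paper). So the argument needs no modification. The main obstacle is therefore just bookkeeping: we must verify that the distributional identity is taken for circularly symmetric $Z$, which is the setting in which the lemma is applied to the vectors $(X_n(z))$.
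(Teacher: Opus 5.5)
The paper gives no proof of this lemma. It is quoted as a well-known formula and used directly in the proof of Proposition~\ref{prop:fdd}, so there is no competing argument to compare with. Your whitening--diagonalization proof is the standard derivation and is correct, including the treatment of singular $\Gamma$.

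There are two small points. First, with $X=it\Gamma^{1/2}$ and $Y=A\Gamma^{1/2}$, Sylvester's identity gives $\det(I-itA\Gamma)$, not $\det(I-it\Gamma A)$. The two are equal by one more application of the same identity. Alternatively, take $X=\Gamma^{1/2}$ and $Y=it\Gamma^{1/2}A$, which lands on $\Gamma A$ directly.

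Second, the circular-symmetry assumption you flag is a genuine implicit hypothesis of the lemma, not mere bookkeeping. The formula is false without it: for $d=1$, $A=1$ and $Z$ a real standard Gaussian, $\E e^{itZ^2}=(1-2it)^{-1/2}\neq(1-it)^{-1}$. The hypothesis does hold where the paper uses the lemma, since the $X_n(z)$ are complex-linear combinations of the circular variables $\zeta_k$.
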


The following proposition establishes the finite-dimensional
convergence.

\begin{proposition}\label{prop:fdd}
Fix $m\ge 1$, $\xi_1,\dots,\xi_m \in \C$, and $a_1,\dots,a_m \in \R$. Then
\[
\sum_{j=1}^m a_j \cG_N(\xi_j)
\inlawto N(0,\sigma^2),
\]
where
\[
\sigma^2
=
\sum_{j,k=1}^m a_j a_k \kappa(|\xi_j-\xi_k|).
\]
Consequently, the finite-dimensional distributions of $\cG_N$ converge to those of
the centered real Gaussian field with covariance $\kappa(|\xi-\eta|)$.
\end{proposition}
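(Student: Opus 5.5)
The natural route is the method of cumulants (equivalently, characteristic functions) for a quadratic form in a Gaussian vector, using Lemma~\ref{lem:quadratic-form}. Write
\[
L_N:=\sum_{j=1}^m a_j \cG_N(\xi_j)=\frac{1}{\sqrt N}\Bigl(Z^*AZ-\E[Z^*AZ]\Bigr),
\]
where $Z\in\C^{mN}$ is the centered complex Gaussian vector with entries $X_n(\sqrt N\xi_j)$, $0\le n\le N-1$, $1\le j\le m$, and $A$ is the diagonal real matrix with entry $a_j$ in the block belonging to $\xi_j$. Let $\Gamma_N$ be the covariance matrix of $Z$. By \eqref{eq:different_index} its entries are $\E[X_n(z)\overline{X_{n'}(w)}]$, which have modulus $|U(w-z)_{n',n}|$ (up to a phase). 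By Lemma~\ref{lem:quadratic-form},
\[
\log \E e^{itL_N}=-\frac{it}{\sqrt N}\Tr(\Gamma_N A)-\log\det\Bigl(I-\frac{it}{\sqrt N}\Gamma_NA\Bigr)=\sum_{p\ge2}\frac{(it)^p}{p\,N^{p/2}}\Tr\bigl((\Gamma_NA)^p\bigr).
\]
So the $p$-th cumulant of $L_N$ is $(p-1)!\,N^{-p/2}\Tr((\Gamma_NA)^p)$. Its $p=2$ term is exactly $\Var(L_N)$, which converges to $\sigma^2$ by Proposition~\ref{prop:covariance-limit}, applied bilinearly to each pair $(\xi_j,\xi_k)$.

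It therefore suffices to show $N^{-p/2}\Tr((\Gamma_NA)^p)\to0$ for every $p\ge3$. Then all cumulants of order at least $3$ vanish in the limit, the moments converge to those of $N(0,\sigma^2)$, and since the normal law is moment-determinate, $L_N\inlawto N(0,\sigma^2)$. The last sentence of the proposition then follows from the Cram\'er--Wold device.

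The key step, and the main obstacle, is the operator-norm bound $\|\Gamma_N\|_{op}\le m$, uniformly in $N$.
\begin{itemize}
\item Each diagonal block of $\Gamma_N$ is the identity $I_N$, by Lemma~\ref{lem:covariance_z_and_w}.
\item The off-diagonal block $(j,k)$ is, up to diagonal unitary phase factors, a compression to the first $N$ coordinates of the unitary $U(\sqrt N(\xi_k-\xi_j))$ of Lemma~\ref{lem:displacement-matrix}. This structure is already visible in the proof of Lemma~\ref{lem:translation-invariance}, where $(X_n(z))_n=\zeta U(z)$.
\item More directly, $\Gamma_N=P^*P$ with $P=[\,U(z_1)\Pi_N,\dots,U(z_m)\Pi_N\,]$, where $\Pi_N$ is the coordinate projection. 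Hence $\|\Gamma_N\|_{op}\le\sum_j\|U(z_j)\Pi_N\|^2= m$.
\end{itemize}

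With this bound in hand, the estimate is routine. For $p\ge3$,
\[
|\Tr((\Gamma_NA)^p)|\le\|\Gamma_NA\|_{op}^{p-2}\,\Tr\bigl((\Gamma_NA)(\Gamma_NA)^*\bigr)\le (m\max|a_j|)^{p-2}\,\|\Gamma_NA\|_{HS}^2.
\]
Moreover $\|\Gamma_NA\|_{HS}^2\le \max|a_j|^2\,\|\Gamma_N\|_{HS}^2$, and $\|\Gamma_N\|_{HS}^2=\sum_{j,k}\sum_{n,n'<N}|U(\sqrt N(\xi_k-\xi_j))_{n',n}|^2\le m^2N$, because columns of a unitary have unit norm. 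Therefore $N^{-p/2}|\Tr((\Gamma_NA)^p)|\le C_{m,a,p}\,N^{1-p/2}\to0$ for every $p\ge3$, which completes the argument.

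If one prefers to avoid the moment method, the same bounds give convergence of the characteristic functions directly. Since $\|\Gamma_NA\|_{op}/\sqrt N\to0$, the logarithm expansion is valid for fixed $t$ and large $N$, and the tail $\sum_{p\ge3}$ is bounded by $C|t|^3N^{-1/2}$. The only care needed is the branch of $\log\det$, which is harmless because all eigenvalues of $\Gamma_NA/\sqrt N$ are real and small.
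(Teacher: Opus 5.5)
Your proposal is correct and follows essentially the same route as the paper. Both arguments use the characteristic function of the centered quadratic form via Lemma~\ref{lem:quadratic-form}, identify the $p=2$ term through Proposition~\ref{prop:covariance-limit}, use the uniform bound $\|\Gamma_N\|\le m$ to make all $p\ge 3$ terms vanish, and finish with Cram\'er--Wold. The differences are cosmetic. Your Gram factorization by truncated unitaries is the matrix form of the paper's $L^2(\P)$ triangle-inequality argument; strictly, $\Gamma_N=\overline{P^*P}$ under the convention $\Gamma_N=\E[ZZ^*]$, which does not affect norms or traces. Your Hilbert--Schmidt trace estimate gives the same $N^{1-p/2}$ rate as the paper's cruder bound $|\Tr((\Gamma_N A)^p)|\le mN\|\Gamma_N A\|^p$.
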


\begin{proof}
For $1\le j\le m$ and $0\le n\le N-1$, write
\[
Z_{j,n}^{(N)} := X_n(\sqrt{N}\,\xi_j),
\]
and collect these variables into the column vector
\[
Z^{(N)} := \bigl(Z_{j,n}^{(N)}\bigr)_{1\le j\le m,\ 0\le n\le N-1} \in \C^{mN}.
\]
Let $\Gamma^{(N)}$ be its covariance matrix and its 
$(j,k)$-block component of size $N$ is given by 
\begin{equation}
\Gamma^{(N)}_{jk} 
:=
\Big(\E\bigl[Z_{j,n}^{(N)} \overline{Z_{k,\ell}^{(N)}}\bigr]
 \Big)_{0 \le n \le N-1, 0 \le \ell \le N-1} \qquad (1 \le
 j,k \le m).
\label{eq:Gamma-block} 
\end{equation}
and set $A:=\diag(a_1 I_N,\dots,a_m I_N)$. 
Since the diagonal entries of $\Gamma^{(N)}$ are all equal to $1$, 
\[
L_N:=\sum_{j=1}^m a_j \cG_N(\xi_j)
=
\frac{1}{\sqrt{N}}
\Bigl(
(Z^{(N)})^* A Z^{(N)} - \Tr(\Gamma^{(N)} A)
\Bigr).
\]

We claim that
\[
\|\Gamma^{(N)}\| \le m
\qquad
\text{for all } N\ge 1.
\]
Let $v=(v_1,\dots,v_m)^T \in \C^{mN}$, where each $v_j =
 (v_{j,n})_{n=0}^{N-1}\in \C^N$ is a row vector, and define 
$\alpha_j := \sum_{n=0}^{N-1} v_{j,n} Z^{(N)}_{j,n}$. Then 
\[
(v^*\Gamma^{(N)} v)^{1/2}
= \big\|\sum_{j=1}^m \alpha_j \big\|_{L^2(\P)}
\le 
\sum_{j=1}^m \big\|\alpha_j \big\|_{L^2(\P)}
= 
\sum_{j=1}^m \big\|v_j \big\|_{\C^N}
\le \sqrt{m} \big\|v \big\|_{\C^{mN}}. 
\]
Thus $\| \Gamma^{(N)} \| \le m$, and therefore
\begin{equation}
\|\Gamma^{(N)} A\| \le m \max_{1\le j\le m}|a_j| =: M.
\label{eq:normofGNA} 
\end{equation}

Let $\phi_N(t):=\E e^{itL_N} \ (t\in \R)$. 
By Lemma~\ref{lem:quadratic-form},
\[
\phi_N(t)
=
\exp\left(
-\frac{it}{\sqrt{N}} \Tr(\Gamma^{(N)} A)
\right)
\det\left(
I-\frac{it}{\sqrt{N}}\Gamma^{(N)} A
\right)^{-1}.
\]
For all sufficiently large $N$, its logarithm can be expanded as 
\[
\log \phi_N(t)
=
\sum_{p=2}^{\infty}
\frac{(it)^p}{pN^{p/2}}
\Tr\bigl((\Gamma^{(N)} A)^p\bigr).
\]

We first identify the quadratic term. By 
\eqref{eq:defofGN} and \eqref{eq:Gamma-block},  
\[
\frac{1}{N}\Tr\bigl((\Gamma^{(N)} A)^2\bigr)
=
\sum_{j,k=1}^m a_j a_k
\frac{1}{N}\Tr\bigl(\Gamma^{(N)}_{jk}\Gamma^{(N)}_{kj}\bigr)
= 
\sum_{j,k=1}^m a_j a_k
\Cov\bigl(\cG_N(\xi_j),\cG_N(\xi_k)\bigr).
\]
Hence Proposition~\ref{prop:covariance-limit} gives
\[
\frac{1}{N}\Tr\bigl((\Gamma^{(N)} A)^2\bigr)
\to
\sum_{j,k=1}^m a_j a_k \kappa(|\xi_j-\xi_k|)
=:
\sigma^2.
\]

Next we estimate the higher-order terms. For every $p\ge 3$, 
by \eqref{eq:normofGNA}, 
\[
\left|
\Tr\bigl((\Gamma^{(N)} A)^p\bigr)
\right|
\le
mN\, \|\Gamma^{(N)} A\|^p
\le
mN\, M^p. 
\]
Therefore, for sufficiently large $N$, 
\[
\sum_{p=3}^{\infty}
\frac{|t|^p}{pN^{p/2}}
\left|
\Tr\bigl((\Gamma^{(N)} A)^p\bigr)
\right|
\le
mN
\sum_{p=3}^{\infty}
\frac{1}{p}
\left(
\frac{|t|M}{\sqrt{N}}
\right)^p, 
\]
which converges to $0$ as $N \to \infty$. Consequently,
\[
\log \phi_N(t)
\to
-\frac{t^2}{2}\sigma^2.
\]
The convergence of the finite-dimensional distributions of $\cG_N$ follows from the
 Cram\'er--Wold device. 
\end{proof}

\subsection{Tightness in H\"older spaces}

We now prove the increment bounds needed for tightness.

\begin{proposition}\label{prop:l2-increment}
There exists a universal constant $C<\infty$ such that, for all $N\ge 1$ and
all $\xi,\eta\in \C$,
\[
\E\bigl[|\cG_N(\xi)-\cG_N(\eta)|^2\bigr]
\le
C |\xi-\eta|.
\]
\end{proposition}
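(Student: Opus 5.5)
The plan is to reduce the increment to an explicit tail sum for the random variables $M_{n,z}$, and then bound that sum with the first two moments from Lemma~\ref{lem:displacement-matrix2}.

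\textbf{Step 1: reduction to a tail sum.} Put $u:=\eta-\xi$ and $r:=|u|$. Formula \eqref{eq:covGNandGN} with $\eta=\xi$ gives $\Var(\cG_N(\xi))=\frac1N\sum_{n<N}\P(M_{n,0}<N)=1$, since $M_{n,0}=n$ almost surely. Hence
\[
\E\bigl[|\cG_N(\xi)-\cG_N(\eta)|^2\bigr]=2-2\Cov\bigl(\cG_N(\xi),\cG_N(\eta)\bigr)
=\frac{2}{N}\sum_{n=0}^{N-1}\P\bigl(M_{n,\sqrt N u}\ge N\bigr).
\]
The left side is always at most $4$, because each $\cG_N(\cdot)$ has variance $1$. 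So for $r\ge 1/4$ the bound holds with $C=16$, and we may assume $r<1/4$ from now on.

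\textbf{Step 2: moment bounds.} Set $z=\sqrt N u$ and $j:=N-n\in\{1,\dots,N\}$. By Lemma~\ref{lem:displacement-matrix2}, $M_{n,z}-n$ has mean $Nr^2$ and variance $(2n+1)Nr^2\le 2N^2r^2$.

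\emph{Large $j$.} If $j\ge 2Nr$, then $j-Nr^2\ge j/2$ (using $r<1/4$). Chebyshev's inequality then gives
\[
\P(M_{n,z}\ge N)\le \frac{2N^2r^2}{(j-Nr^2)^2}\le \frac{8N^2r^2}{j^2}.
\]
Summing over $j\ge \max(1,2Nr)$ bounds this part of the sum by $C\min(N^2r^2,\,N r)$.

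\emph{Small $j$.} If $j<2Nr$, there are fewer than $2Nr$ such indices. For each of them I use
\[
\P(M_{n,z}\ge N)\le \P(M_{n,z}\neq n)\le \E\bigl[(M_{n,z}-n)^2\bigr]=(2n+1)Nr^2+N^2r^4\le 3N^2r^2,
\]
and also the trivial bound $1$. So this part of the sum is at most $\#\{j<2Nr\}\cdot\min(1,3N^2r^2)$.

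\textbf{Step 3: combine, splitting on the size of $Nr$.} If $Nr\ge 1$, both parts are $O(Nr)$. If $Nr<1$, only $j=1$ can lie in the small range, so that part is $O(N^2r^2)\le O(Nr)$, and the large range contributes $O(N^2r^2)\le O(Nr)$ as well. In either case $\sum_{n<N}\P(M_{n,z}\ge N)\le C' Nr$, and multiplying by $2/N$ gives $\E|\cG_N(\xi)-\cG_N(\eta)|^2\le Cr$.

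The step I expect to need the most care is the regime $Nr\lesssim 1$. There a crude count of "bad" indices costs $1/N$, which is not $O(r)$. The fix is the second-moment bound on $\P(M\neq n)$, which is quadratic in $r$; I would double-check that its constants are uniform in $n\le N-1$.
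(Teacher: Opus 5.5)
Your proof is correct and follows the paper's argument. The shared steps are the reduction via \eqref{eq:covGNandGN} to $\frac{2}{N}\sum_{n}\P(M_{n,\sqrt N u}\ge N)$, the split at $j=N-n\approx 2Nr$, and Chebyshev with Lemma~\ref{lem:displacement-matrix2} on the large-$j$ range. The one difference is your second-moment bound on the small-$j$ range, which is superfluous. As you yourself note, there are fewer than $2Nr$ indices $1\le j<2Nr$, and none at all when $Nr<1/2$, so the trivial bound already contributes at most $\frac{2}{N}\cdot 2Nr=4r$; this is how the paper handles it, and your worry about the regime $Nr\lesssim 1$ never arises.
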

\begin{proof}
Set $u:=\eta-\xi$ and $r:=|u|$. If $r=0$, there is nothing to prove.
By \eqref{eq:covGNandGN},  
\[
\E\bigl[|\cG_N(\xi)-\cG_N(\eta)|^2\bigr]
=
2\bigl(1-\Cov(\cG_N(\xi),\cG_N(\eta))\bigr)
=
\frac{2}{N}
\sum_{n=0}^{N-1}
\P(M_{n,\sqrt{N}\,u}\ge N).
\]
If $r\ge 1$, then trivially
\[
\E\bigl[|\cG_N(\xi)-\cG_N(\eta)|^2\bigr]
\le 2
\le 2r.
\]
Thus it remains to consider the case $0<r<1$.

Write the sum in terms of $j=N-n$ and use the trivial bound
 for $1 \le j \le 2Nr$ to obtain 
\[
\E\bigl[|\cG_N(\xi)-\cG_N(\eta)|^2\bigr]
=
\frac{2}{N}\sum_{j=1}^N \P(M_{N-j,\sqrt{N}\,u}\ge N)
\le 4r + \frac{2}{N}\sum_{j > 2Nr} \P(M_{N-j,\sqrt{N}\,u}\ge N)
\]
Fix $j>2Nr$. By Lemma~\ref{lem:displacement-matrix2},
since $0<r<1$, 
\[
N-\E[M_{N-j,\sqrt{N}\,u}]
=
j-Nr^2
\ge
\frac{j}{2}, 
\]
and 
\[
\Var(M_{N-j,\sqrt{N}\,u})
=
(2(N-j)+1)Nr^2
\le
3N^2 r^2.
\]
By Chebyshev's inequality,
\[
\P(M_{N-j,\sqrt{N}\,u}\ge N)
\le
\frac{\Var(M_{N-j,\sqrt{N}\,u})}
{\bigl(N-\E[M_{N-j,\sqrt{N}\,u}]\bigr)^2}
\le
\frac{12 N^2 r^2}{j^2}, 
\]
and hence 
\[
\frac{2}{N}\sum_{j>2Nr} \P(M_{N-j,\sqrt{N}\,u}\ge N) 
\le 24 Nr^2 \sum_{j>2Nr}\frac{1}{j^2}
\le c Nr^2 \min\left\{1,\frac{1}{Nr}\right\}
\le c r. 
\]
Therefore, 
\[
\E\bigl[|\cG_N(\xi)-\cG_N(\eta)|^2\bigr]
\le
Cr
=
C|\xi-\eta|.
\]
\end{proof}

We recall Nelson's hypercontractive estimate for Gaussian
chaos; see \cite[Chapter 6]{Janson1997}.
\begin{lemma}\label{lem:hypercontractive}
If $F$ belongs to the second homogeneous Gaussian chaos, then for every $p\ge 2$,
\[
\|F\|_{L^p}
\le
(p-1)\|F\|_{L^2}.
\]
\end{lemma}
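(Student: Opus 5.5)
The statement is Nelson's hypercontractivity for the second Wiener chaos. The plan is to use the Ornstein--Uhlenbeck semigroup $P_s$ on Gaussian space, not to prove hypercontractivity from scratch. On the $q$-th homogeneous chaos, $P_s$ acts as multiplication by $e^{-qs}$. The hypercontractive inequality of Nelson (equivalently Gross's log-Sobolev inequality; see \cite[Chapter 5--6]{Janson1997}) gives
\[
\|P_s G\|_{L^p}\le \|G\|_{L^2}\qquad\text{whenever } e^{2s}\ge p-1 .
\]

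First fix $p\ge 2$ and choose $s\ge0$ with $e^{2s}=p-1$. Since $F$ lies in the second chaos, $F=e^{2s}P_sF$. I will then write $G:=e^{2s}F$, which also lies in the second chaos, so that $F=P_sG$. Hypercontractivity then gives
\[
\|F\|_{L^p}=\|P_s G\|_{L^p}\le \|G\|_{L^2}=e^{2s}\|F\|_{L^2}=(p-1)\|F\|_{L^2}.
\]
This is exactly the claimed bound. In general, on the $q$-th chaos the same argument yields the constant $(p-1)^{q/2}$, which equals $p-1$ for $q=2$.

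The only point to check is how the lemma will be applied. Our variables are complex Gaussians $X_n(z)$, and the random variables of interest, such as $\cG_N(\xi)-\cG_N(\eta)$, are of the form $Z^*AZ-\E[Z^*AZ]$ with $A$ Hermitian. Writing each $Z$ through its real and imaginary parts, these are real-valued centered quadratic polynomials in a real Gaussian family. Hence they belong to the second homogeneous real Wiener chaos, and the lemma applies to them directly.

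There is no real obstacle, since the statement is standard. If a self-contained argument were desired instead of citing Nelson, the main work would be proving the hypercontractive bound itself, via Gross's logarithmic Sobolev inequality and the differential inequality for $s\mapsto \|P_sG\|_{L^{q(s)}}$ with $q(s)=1+e^{2s}$. I would simply cite it.
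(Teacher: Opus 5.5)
Your proof is correct and takes essentially the paper's approach: the paper gives no argument and simply cites Nelson's hypercontractivity from Janson, and your reduction (choose $e^{2s}=p-1$, use $P_sF=e^{-2s}F$ on the second chaos, then apply $\|P_sG\|_{L^p}\le\|G\|_{L^2}$) is the standard derivation behind that citation. Your check that the quantities used later are of the form $Z^*AZ-\E[Z^*AZ]$ is also right: these are centered quadratic forms with no linear terms in the real Gaussian coordinates, so they lie in the second homogeneous chaos as the paper's application requires.
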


Using this, we upgrade the $L^2$ increment estimate to
higher moments.

\begin{proposition}\label{prop:lp-increment}
For every $p\ge 2$ there exists $C_p<\infty$ such that, for all $N\ge 1$ and
all $\xi,\eta\in \C$,
\[
\E\bigl[|\cG_N(\xi)-\cG_N(\eta)|^p\bigr]
\le
C_p |\xi-\eta|^{p/2}.
\]
\end{proposition}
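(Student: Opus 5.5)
The increment $F:=\cG_N(\xi)-\cG_N(\eta)$ should be treated as a centered element of the second Wiener chaos. Then Nelson's estimate (Lemma~\ref{lem:hypercontractive}) turns the $L^p$ bound into the $L^2$ bound of Proposition~\ref{prop:l2-increment}, raised to the power $p/2$.

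First, write
\[
F=\frac{1}{\sqrt N}\sum_{n=0}^{N-1}\Bigl(\bigl(|X_n(\sqrt N\xi)|^2-1\bigr)-\bigl(|X_n(\sqrt N\eta)|^2-1\bigr)\Bigr).
\]
Each $X_n(z)$ is a finite linear functional of the i.i.d.\ complex Gaussians $\{\zeta_k\}$. Equivalently, it is a linear combination of the underlying real Gaussians $\Re\zeta_k,\Im\zeta_k$, with coefficients $U(z)_{k,n}$ that are square-summable by Lemma~\ref{lem:displacement-matrix}. Hence $|X_n(z)|^2$ is a quadratic form in real Gaussians with mean $1$.

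Because it is a quadratic form, $|X_n(z)|^2-1$ lies in the direct sum of the zeroth and second chaos. Its projection onto the zeroth chaos is its expectation, which is $0$. So $|X_n(z)|^2-1$ belongs to the second homogeneous chaos. For an infinite series one passes to the limit, since the second chaos is a closed subspace of $L^2(\P)$ and the truncations converge in $L^2$. The second chaos is a linear space, so the finite sum $F$ belongs to it as well.

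Second, apply Lemma~\ref{lem:hypercontractive} with $p\ge2$:
\[
\E|F|^p\le (p-1)^p\bigl(\E|F|^2\bigr)^{p/2}.
\]
Third, insert Proposition~\ref{prop:l2-increment}, $\E|F|^2\le C|\xi-\eta|$, to get
\[
\E|F|^p\le (p-1)^pC^{p/2}|\xi-\eta|^{p/2}.
\]
So one may take $C_p:=(p-1)^pC^{p/2}$, which is independent of $N$, $\xi$ and $\eta$.

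The only point needing care is the chaos membership. One has to check that the complex structure causes no problem: $|X|^2=(\Re X)^2+(\Im X)^2$ is a real quadratic form in real Gaussians, so the real Wiener chaos framework applies directly. One also has to check that subtracting the mean removes exactly the zeroth-chaos component. Both checks are routine, so I expect no real obstacle.
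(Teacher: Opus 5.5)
Your proposal is correct and is essentially the paper's proof: the increment is placed in the second homogeneous Gaussian chaos, Nelson's bound $\|F\|_{L^p}\le (p-1)\|F\|_{L^2}$ (Lemma~\ref{lem:hypercontractive}) is applied, and Proposition~\ref{prop:l2-increment} is inserted, giving an $N$-independent constant $C_p=(p-1)^pC^{p/2}$. One small slip: $X_n(z)=\sum_k \zeta_k U(z)_{k,n}$ is an infinite $L^2$-convergent series rather than a finite linear functional, but your remark that the second chaos is closed in $L^2(\P)$ already handles this.
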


\begin{proof}
Fix $N\ge 1$ and $\xi,\eta\in \C$, and set
\[
F_{N,\xi,\eta}:=\cG_N(\xi)-\cG_N(\eta).
\]
For each $z\in \C$ and each $n\ge 0$, the random variable $X_n(\sqrt{N}\,z)$ is
a centered complex Gaussian linear functional of the underlying Gaussian
sequence $(\zeta_k)_{k\ge 0}$. Therefore, $|X_n(\sqrt{N}\,z)|^2-1$
is a centered quadratic polynomial in the real Gaussian family
$(\Re \zeta_k,\Im \zeta_k)_{k\ge 0}$. Hence $F_{N,\xi,\eta}$ belongs to the
second homogeneous Gaussian chaos. By
Lemma~\ref{lem:hypercontractive} and
Proposition~\ref{prop:l2-increment},
\[
\|F_{N,\xi,\eta}\|_{L^p}
\le
(p-1)\|F_{N,\xi,\eta}\|_{L^2}
\le
C_p |\xi-\eta|^{1/2}.
\]
Raising both sides to the power $p$ gives the claim.
\end{proof}

The higher moment bound now gives tightness by the
Kolmogorov--Chentsov theorem.

\begin{proposition}\label{prop:tightness-holder}
Fix a compact set $K\subset \C$ and $0<\alpha<\frac{1}{2}$. Then the family
$\{\cG_N\}_{N\ge 1}$ is tight in $C^{0,\alpha}(K)$. In particular, it is tight in
$C(K)$.
\end{proposition}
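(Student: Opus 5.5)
We will derive tightness in $C^{0,\alpha}(K)$ from the moment bound of Proposition~\ref{prop:lp-increment} through the Kolmogorov--Chentsov criterion, in its Garsia--Rodemich--Rumsey form. That form gives an $L^p$ estimate for the H\"older seminorm, not merely the existence of a H\"older modification. Fix $0<\alpha<\frac12$ and choose $p$ large enough that $\frac{p}{2}-2>p\alpha'$ for some $\alpha'\in(\alpha,\frac12)$; for instance, any $p$ with $p(\frac12-\alpha')>2$ will do. We may enlarge $K$ to a closed square $Q$, since restriction from $Q$ to $K$ is continuous in the H\"older norms.

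By Proposition~\ref{prop:lp-increment},
\[
\E|\cG_N(\xi)-\cG_N(\eta)|^p\le C_p|\xi-\eta|^{2+(p/2-2)},
\]
which is the Kolmogorov condition in dimension $d=2$ with exponent $\beta=p/2-2>0$. Each $\cG_N$ is smooth in $\xi$, so no modification is needed. The quantitative Kolmogorov lemma (or the GRR inequality) then yields
\[
\sup_N \E\bigl[[\cG_N]_{C^{0,\alpha'}(Q)}^p\bigr]\le C'_{p,Q,\alpha'}<\infty .
\]
In addition, $\E[\cG_N(\xi_0)^2]=\Var(\cG_N(\xi_0))$ stays bounded by Proposition~\ref{prop:covariance-limit}; indeed it is at most $1$, by \eqref{eq:covGNandGN}. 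Combining these via $\sup_Q|\cG_N|\le|\cG_N(\xi_0)|+(\operatorname{diam}Q)^{\alpha'}[\cG_N]_{C^{0,\alpha'}}$ and Markov's inequality, for every $\varepsilon>0$ there is an $R$ with
\[
\P\bigl(\|\cG_N\|_{C^{0,\alpha'}(Q)}>R\bigr)<\varepsilon \qquad \text{for all } N .
\]

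We then use the compact embedding $C^{0,\alpha'}(K)\hookrightarrow C^{0,\alpha}(K)$ for $\alpha<\alpha'$, which follows from Arzel\`a--Ascoli together with interpolation of the H\"older seminorm. Under this embedding the ball of radius $R$ in $C^{0,\alpha'}(K)$ is relatively compact in $C^{0,\alpha}(K)$, and this gives tightness in $C^{0,\alpha}(K)$. Since $C^{0,\alpha}(K)$ embeds continuously in $C(K)$, tightness in $C(K)$ follows as well.

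The main subtlety is the non-separability of $C^{0,\alpha}(K)$. Tightness there should be understood via these compact sets, and the laws should be viewed on the closed separable subspace $\overline{C^{0,\alpha'}}$ (the little H\"older space), which contains the compact balls. The moment computation itself is routine once Proposition~\ref{prop:lp-increment} is available.
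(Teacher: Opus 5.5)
Your proposal is correct and follows the paper's proof essentially step by step. Both arguments use the $L^p$ increment bound of Proposition~\ref{prop:lp-increment} with $p(\tfrac12-\alpha')>2$, the quantitative Kolmogorov--Chentsov estimate $\sup_N\E\bigl[[\cG_N]_{C^{0,\alpha'}}^p\bigr]<\infty$, control of the base point $\cG_N(\xi_0)$, Markov's inequality, and the compact embedding $C^{0,\alpha'}(K)\hookrightarrow C^{0,\alpha}(K)$.

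The differences are cosmetic. You control $\cG_N(\xi_0)$ through its exact variance $1$ and a second-moment Markov bound, where the paper uses hypercontractivity. You also enlarge $K$ to a square and add a correct remark on the non-separability of $C^{0,\alpha}(K)$, which the paper leaves implicit.
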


\begin{proof}
Choose $\beta$ such that $\alpha<\beta<1/2$. 
Next choose $p>4$ so large that
\[
\beta < \frac{1}{2}-\frac{2}{p},
\qquad
\text{equivalently}\qquad
\frac{p}{2}>2+\beta p.
\]
By Proposition~\ref{prop:lp-increment},
\[
\sup_{N\ge 1}
\E\bigl[|\cG_N(\xi)-\cG_N(\eta)|^p\bigr]
\le
C_p |\xi-\eta|^{p/2},
\qquad
\xi,\eta\in K.
\]
Since $K\subset \C \simeq \R^2$ is two-dimensional and
$\frac{p}{2}>2+\beta p$, the Kolmogorov--Chentsov theorem yields
\[
\sup_{N\ge 1}
\E\bigl[[\cG_N]_{C^{0,\beta}(K)}^p\bigr]
\le
C_{K,p,\beta}
\]
for some constant $C_{K,p,\beta}<\infty$; see for instance 
\cite{Billingsley1999, Kallenberg2021}. 

Fix $\xi_0\in K$. By (7.1), $\E[|\cG_N(\xi_0)|^2]=1$, and
$\cG_N(\xi_0)$ belongs to the second homogeneous Gaussian chaos. Hence
Lemma~\ref{lem:hypercontractive} gives
$\sup_{N\ge 1} \E\bigl[|\cG_N(\xi_0)|^p\bigr] < \infty$.
Using
\[
\|f\|_{C^{0,\beta}(K)}
\le
|f(\xi_0)| + \bigl(1+(\operatorname{diam} K)^\beta\bigr)[f]_{C^{0,\beta}(K)},
\]
we conclude that
\[
\sup_{N\ge 1}\E\bigl[\|\cG_N\|_{C^{0,\beta}(K)}^p\bigr] < \infty.
\]

For $R>0$, let
\[
B_R
:=
\{f\in C^{0,\beta}(K): \|f\|_{C^{0,\beta}(K)}\le R\}.
\]
Since $\beta>\alpha$ and $K$ is compact, the embedding
$C^{0,\beta}(K)\hookrightarrow C^{0,\alpha}(K)$ 
is compact. Therefore each $B_R$ is compact in $C^{0,\alpha}(K)$.
By Markov's inequality,
\[
\sup_{N\ge 1}\P(\cG_N\notin B_R)
\le
\frac{1}{R^p}
\sup_{N\ge 1}\E\bigl[\|\cG_N\|_{C^{0,\beta}(K)}^p\bigr].
\]
The right-hand side tends to $0$ as $R\to\infty$, which proves tightness in
$C^{0,\alpha}(K)$. Since the inclusion
$C^{0,\alpha}(K)\hookrightarrow C(K)$ is continuous, tightness in $C(K)$ follows
as well.
\end{proof}

\begin{remark}
 Regularity of the limiting field $\cG$ can also be
 verified in the same manner. 
\end{remark}

\subsection{Regularity of the limiting field}
We now construct the limiting Gaussian field as a continuous process.
\begin{proposition}\label{prop:holder-version-limit}
There exists a centered real Gaussian process $\cG$ on $\C$ with covariance
\[
\E\bigl[\cG(\xi)\cG(\eta)\bigr] = \kappa(|\xi-\eta|),
\]
and for every compact set $K\subset \C$ and every $0<\alpha<\frac{1}{2}$, this
process admits a version with sample paths in $C^{0,\alpha}(K)$.
\end{proposition}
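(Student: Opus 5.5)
To construct $\cG$, I will realize it through the Wiener integral in the remark following Theorem~\ref{thm:functional-clt}. Let $W_{\R}$ be real Gaussian white noise on $\C\simeq\R^2$, i.e.\ an isonormal Gaussian process on $L^2(\C,du)$, and set
\[
\cG(\xi):=\frac{1}{\sqrt{\pi}}\int_{\C}\one_{B(\xi,1)}(u)\,dW_{\R}(u),\qquad \xi\in\C .
\]
This is a centered real Gaussian process. By the isometry,
\[
\E[\cG(\xi)\cG(\eta)]=\frac{1}{\pi}\langle \one_{B(\xi,1)},\one_{B(\eta,1)}\rangle_{L^2}=\frac{|B(\xi,1)\cap B(\eta,1)|}{\pi}.
\]
Translating by $-\xi$ and rotating shows this equals $\kappa(|\xi-\eta|)$. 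This also shows that $\kappa(|\xi-\eta|)$ is positive semidefinite, so existence could alternatively come from Kolmogorov's extension theorem. The explicit construction is more convenient, however.

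For regularity, I will estimate increments. We have
\[
\E|\cG(\xi)-\cG(\eta)|^2=\frac{1}{\pi}|B(\xi,1)\,\triangle\,B(\eta,1)|=2\bigl(1-\kappa(|\xi-\eta|)\bigr).
\]
The symmetric difference of two unit disks with centers at distance $r$ has area at most $4r$. It is contained in the region swept by the boundary circle of length $2\pi$ moved by $r$, giving a bound $\le 2\pi r$. Alternatively, one can differentiate the explicit formula for $\kappa$: since $\kappa'(r)=-\frac{2}{\pi}\sqrt{1-r^2/4}$, we get $1-\kappa(r)\le \frac{2}{\pi}r$. Either way, $\E|\cG(\xi)-\cG(\eta)|^2\le C|\xi-\eta|$. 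Because the increments are Gaussian, for every $p\ge2$,
\[
\E|\cG(\xi)-\cG(\eta)|^p=c_p\bigl(\E|\cG(\xi)-\cG(\eta)|^2\bigr)^{p/2}\le C_p|\xi-\eta|^{p/2}.
\]
Hypercontractivity is not needed here, since Gaussian moments are explicit.

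Next, fix a compact $K$ and $0<\alpha<1/2$. Choose $p$ large enough that $p/2>2+\alpha p$, which is possible since $\alpha<1/2$. The Kolmogorov--Chentsov theorem in dimension $2$, exactly as used in Proposition~\ref{prop:tightness-holder}, then yields a continuous modification of $\cG$ on $K$ whose paths are $\alpha$-H\"older, with $\E[\cG]_{C^{0,\alpha}(K)}^p<\infty$.

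To get a single version valid for every compact and every $\alpha$, I will apply this on the disks $\overline{B(0,m)}$ with the exponents $\alpha_j\uparrow 1/2$. Modifications agree almost surely on a countable dense set, so all of them coincide with the unique continuous extension from, say, $\mathbb{Q}+i\mathbb{Q}$. Hence one continuous version serves for every $K$ and every $\alpha<1/2$.

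I expect no serious obstacle here; the only care needed is the elementary area bound $|B(\xi,1)\triangle B(\eta,1)|\le C|\xi-\eta|$, together with the bookkeeping of exponents in Kolmogorov--Chentsov. Since this $\cG$ has covariance $\kappa$, Proposition~\ref{prop:fdd} identifies it as the finite-dimensional limit of $\cG_N$. Combined with the tightness in Proposition~\ref{prop:tightness-holder}, this gives Theorem~\ref{thm:functional-clt}.
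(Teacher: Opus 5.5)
Your proof is correct. The regularity half matches the paper: you bound $\E|\cG(\xi)-\cG(\eta)|^2=2(1-\kappa(|\xi-\eta|))\le C|\xi-\eta|$, use Gaussian moment comparison, and apply Kolmogorov--Chentsov with $p/2>2+\alpha p$. The difference is in how you get existence and the increment bound.

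The paper obtains existence abstractly. It notes that the matrices $(\kappa(|\xi_j-\xi_k|))_{j,k}$ are positive semidefinite as pointwise limits of covariance matrices of $\cG_N$ (Proposition~\ref{prop:covariance-limit}), and invokes Kolmogorov's extension theorem. It then bounds $1-\kappa(r)=\frac{2}{\pi}\arcsin\frac{r}{2}+\frac{r}{\pi}\sqrt{1-r^2/4}$ directly from the closed formula.

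You instead build $\cG$ explicitly as the white-noise integral of $\pi^{-1/2}\one_{B(\xi,1)}$. Positive semidefiniteness then comes from the $L^2$ isometry. The increment variance becomes $\pi^{-1}|B(\xi,1)\triangle B(\eta,1)|$, which you bound geometrically, or via $\kappa'(r)=-\frac{2}{\pi}\sqrt{1-r^2/4}$; that derivative is correct and gives the sharp constant $2/\pi$.

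What your route buys:
\begin{itemize}
\item It is self-contained and does not rely on the covariance asymptotics of $\cG_N$.
\item It actually proves the Wiener-integral representation in the remark after Theorem~\ref{thm:functional-clt}, which the paper only asserts.
\item It yields a single continuous version valid for all $K$ and all $\alpha<\frac12$ simultaneously, slightly more than the statement requires.
\end{itemize}

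The paper's route is shorter in context, because positive semidefiniteness of $\kappa$ comes for free from the limit theorem being proved anyway.
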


\begin{proof}
By Proposition~\ref{prop:covariance-limit}, for every finite set
$\{\xi_1,\dots,\xi_m\} \subset \C$, the matrix
 $\bigl(\kappa(|\xi_j-\xi_k|)\bigr)_{1\le j,k\le m}$ 
is positive semidefinite as a pointwise limit of covariance matrices. Hence,
by Kolmogorov's extension theorem, there exists a centered real Gaussian process
$\cG$ with the stated covariance.

Since
\[
\E\bigl[|\cG(\xi)-\cG(\eta)|^2\bigr]
=
2\bigl(1-\kappa(|\xi-\eta|)\bigr),
\]
it is enough to bound $1-\kappa(r)$ by a constant multiple of $r$. For
$0\le r\le 2$,
\[
1-\kappa(r)
=
\frac{2}{\pi}\arcsin\frac{r}{2}
+
\frac{r}{\pi}\sqrt{1-\frac{r^2}{4}}
\le
Cr,
\]
while for $r\ge 2$ the same estimate is trivial because $1-\kappa(r)=1$.
Therefore
\[
\E\bigl[|\cG(\xi)-\cG(\eta)|^2\bigr]
\le
C|\xi-\eta|.
\]
Since $\cG(\xi)-\cG(\eta)$ is Gaussian, for every $p\ge 2$,
\[
\E\bigl[|\cG(\xi)-\cG(\eta)|^p\bigr]
\le
C_p |\xi-\eta|^{p/2}.
\]
Choosing $p>4$ and applying Kolmogorov--Chentsov on $K\subset \R^2$, we obtain a
version with sample paths in $C^{0,\alpha}(K)$ for every
$0<\alpha<\frac{1}{2}$.
\end{proof}

\begin{proof}[Proof of Theorem~\ref{thm:functional-clt}]
By Lemma~\ref{lem:translation-invariance}, it is enough to treat the case $z_0=0$.
By Proposition~\ref{prop:fdd}, the finite-dimensional distributions of $\cG_N$
converge to those of the centered Gaussian field $\cG$ from
Proposition~\ref{prop:holder-version-limit}. By
Proposition~\ref{prop:tightness-holder}, the sequence of laws of $\{\cG_N\}_{N\ge 1}$
is tight in $C(K)$. Since $C(K)$ is a Polish space, Prokhorov's theorem applies,
and every subsequence admits a further subsequence converging weakly in $C(K)$.

Let $\mu$ be any subsequential limit. The coordinate maps
$f\mapsto f(\xi)$ are continuous on $C(K)$, hence $\mu$ has the same
finite-dimensional distributions as $\cG$. Because a Borel probability measure on
$C(K)$ is determined by its finite-dimensional marginals, $\mu$ must be the law
of $\cG$. Thus every subsequential limit is the same, and therefore
\[
\cG_N \inlawto \cG \qquad \text{in } C(K).
\]
The additional H\"older statements are exactly
Propositions~\ref{prop:tightness-holder} and
\ref{prop:holder-version-limit}.
\end{proof}

\section{White noise spectrograms}\label{sec:whitenoise}
\subsection{Gaussian white noise}

\label{sec:WN}One can formally define complex white noise $\cW$ by
the series expansion 
\begin{equation}
\cW(t)=\sum_{k=0}^{\infty }\zeta _{k}h_{k}(t), 
\label{wn}
\end{equation}
where $\zeta _{k}\sim N_{\C}(0,1)$ i.i.d. However, 
such a series is almost surely not square summable. 
To make sense of the expansion \eqref{wn}, one may either
work in a larger weighted space as in \cite[Section 3]{BH} 
or start by defining Brownian motion \cite{AbrAlpayJorTry}. 
We will use a more direct approach based on the construction of the white
noise space, as in \cite{levelSets}. 
Let $\cS(\R)$ denote the
Schwartz space of rapidly decreasing smooth functions from
$\R$ to $\C$, and
let $\cS^{\prime}(\R)$ be its continuous dual,  
the space of tempered distributions consisting of 
all continuous linear functionals on $\cS(\R)$. 
Since $\cS(\R)$ is dense in $L^{2}(\R)$ and
$\cS(\R)\subseteq L^{2}(\R)\subseteq
\cS^{\prime }(\R)$, this defines a
\emph{Gelfand triple} $(\cS(\R),L^{2}\left(
\R\right) ,\cS^{\prime}(\R))$. The 
duality pairing between $\cS(\R)$ and
$\cS^{\prime }(\R)$ is defined by $\langle
\psi ,\phi \rangle :=\psi (\phi )$, for $\psi \in
\cS^{\prime}(\R)$ and $\phi \in
\cS(\R)$. 

The space $\cS^{\prime}(\R)$ is the basic
sample space. Denote by
$\mathcal{B}(\cS^{\prime}(\R))$ its Borel
$\sigma $-field. By the 
Bochner-Minlos theorem \cite{Minlos}, there exists a unique
probability measure $\mathrm{d}\mathcal{P}$ defined on
$\mathcal{B}(\cS^{\prime}(\R))$ such that  
\begin{equation*}
\E\left[ e^{i\Re \langle \cW,\phi \rangle
	   }\right]
:=\int_{\cS^{\prime}(\R)}e^{i\Re \langle
w,\phi \rangle }\mathrm{d}  
\mathcal{P}(w)=e^{-\frac{1}{4}\Vert \phi \Vert _{L^{2}(\R
)}^{2}},\quad \phi \in \cS(\R). 
\end{equation*}
Here $\langle \cW,\phi \rangle =\cW(\phi)$ for $\cW\in 
\cS^{\prime}(\R)$ and $\phi \in \cS(\R)$, and 
$\E$ is the expectation with respect to $\mathrm{d}\mathcal{P}$. We
obtain this way a `stochastic Gelfand triple'
$(\cS^{\prime}(\R),\mathcal{B}(\cS^{\prime}(\R)),\mathrm{d}\mathcal{P})$,
\emph{the white noise probability space}, where $\mathrm{d}\mathcal{P}$ is
called the white noise measure. If $\cW$ is a random
variable with distribution $\mathcal{P}$ and given $\phi _{1},...,\phi
_{n}\in \cS(\R)$ orthonormal in $L^{2}(\R)$, $(\langle \cW,\phi _{1}\rangle ,...,\langle \cW,\phi
_{n}\rangle )$ is a complex standard Gaussian random
vector. Now, consider $\cW$ as a random variable
with distribution $\mathcal{P}$. For $\phi 
\in \cS(\R)$, the sequence $\{\langle \phi _{n},\phi \rangle
\}_{n=0}^{\infty }$ is square summable, and then $\langle \cW,\phi
\rangle =\sum_{n=0}^{\infty }\langle \cW,\phi _{n}\rangle \langle
\phi _{n},\phi \rangle $ $\mathcal{P}$-a.s. since $\langle \cW,\phi
_{n}\rangle \sim N_{\C}(0,1)\ (n=0,1,\dots )$ are
i.i.d. 
Therefore, $\mathcal{P}$-almost surely, $\cW$ can be expanded as
\begin{equation}
\cW=\sum_{n=0}^{\infty }\langle \cW,\phi _{n}\rangle \phi
_{n}\quad \text{in $\cS^{\prime}(\R)$,}  
\label{white}
\end{equation}
with $\langle \cW,\phi _{n}\rangle \sim N_{\C}(0,1)$ i.i.d.

White noise admits a natural interpretation in the
stochastic Gelfand triple described above. 
In the next subsection, 
we explain how our main results apply to random time-frequency analysis. 
Within this context, there are two main directions of research:

\begin{enumerate}
\item Recovery of a signal embedded in white noise from its spectrogram
zeros. This is a recent research topic, which leverages the properties of
the zeros of the GEF for signal-recovery methods 
\cite{PNAS,Silence0,BFC,Maxima,Escudero,BH,BardenetSampta,GWHF}.

\item High resolution time-frequency representations. Averages of
spectrograms with different windows improve the resolution of the
representation. They first appeared in time-varying power spectral
statistical estimation \cite{BB,FranzH} and, more recently, in high
resolution time-frequency representations \cite{XFlandrin,ConceFT,ImpConcEFT}. 
To the best of our knowledge, this is the first paper 
to investigate the efficiency of these methods using properties of zeros.
\end{enumerate}

\subsection{Time-frequency analysis and white noise spectrograms}\label{subsec:time-frequency}

Some basic concepts of time-frequency analysis are required to understand
the representation of white noise in the time-frequency plane. Given a
`window' function $g\in L^{2}({\R})$, the short-time Fourier
transform of the `signal' $f\in L^{2}({\R})$ is \cite{Charly}: 
\begin{equation*}
V_{g}f(x,\xi)=\int_{{\R}}f(t)\overline{g(t-x)}e^{-2\pi i\xi
t}dt,\qquad (x,\xi )\in {\R^{2}}. 
\label{eq_stft}
\end{equation*}
A fundamental property of the STFT is the Moyal formula: 
given $f_{1},f_{2},g_{1},g_{2}\in L^{2}(\R)$, 
\begin{equation*}
\left\langle
 V_{g_{1}}f_{1},V_{g_{2}}f_{2}\right\rangle_{L^{2}(\R^{2})}=\left\langle
 f_{1},f_{2}\right\rangle_{L^{2}(\R)}\left\langle 
g_{1},g_{2}\right\rangle_{L^{2}(\R)}.  
\label{Moyal_STFT}
\end{equation*}
This implies that, when $\lVert g\rVert _{2}=1$, the {Moyal formula} shows
that the map $V_{g}$ is an isometry between $L^{2}({\R})$ and a
closed subspace of $L^{2}({\R^{2}})$. Recall that Feichtinger's
algebra $\cS_{0}(\R)$ can be defined as 
\begin{equation*}
\cS_{0}(\R):=\left\{ g\in {L}^{2}(\R):V_{g}g\in
L^{1}(\C)\right\}. 
\end{equation*}
The Feichtinger algebra $S_0(\R)$ is a convenient
test-function space for the STFT \cite{F81}, and one may use the \emph{Gelfand triple}
$(\cS_{0}(\R),L^{2}\left( \R\right), \cS_{0}^{\prime }(\R))$ instead of 
$(\cS(\R),L^{2}\left( \R\right), \cS^{\prime }(\R))$ 
in the white noise construction of Section~\ref{sec:WN} and obtain in a
similar way a `stochastic Gelfand triple'
$(\cS_{0}^{\prime}(\R),\mathcal{B}(\cS_{0}^{\prime
}(\R)),\mathrm{d}\mathcal{P})$, which will be
\emph{the complex white noise probability space.} 
The STFT of $f\in \cS^{\prime }(\R)$ (or
$\cS_{0}^{\prime }(\R)$) with respect to a
window function $g\in \cS(\R)$ (or
$\cS_{0}(\R)$) is 
\begin{equation}
V_{g}f(x,\xi ):=\langle f,M_{\xi }T_{x}g\rangle ,\quad
 (x,\xi) \in \R^2. 
\label{distributionalSTFT}
\end{equation}
We will select $g\in \cS_{0}(\R)$ among Hermite functions
normalized as follows: 
\begin{equation*}
h_{n}(t)=\frac{2^{1/4}}{\sqrt{n!}}\left( \frac{-1}{2\sqrt{\pi }}\right)
^{n}e^{\pi t^{2}}\frac{d^{n}}{dt^{n}}\left( e^{-2\pi t^{2}}\right) ,\qquad
n\geq 0. 
\end{equation*}
We identify $(x,\xi )\in \R^{2}$ with $z\in
\C$ by $z:=x+i\xi $. Then, computing $V_{h_{n}}h_{k}$\ as in \cite{Abr2010} and
using (\ref{ComplexHermite}), we have 
\begin{equation}
V_{h_{n}}h_{k}\left( \frac{\bar{z}}{\sqrt{\pi }}\right)
 =e^{ix\xi }e^{-\frac{\left\vert z\right\vert
 ^{2}}{2}}H_{k,n}(z,\zbar).  
\label{hermiteT}
\end{equation}
Using (\ref{hermiteT}) and (\ref{FnHermite}), we can take
the STFT of (\ref{white}) in the distributional sense
(\ref{distributionalSTFT}) and obtain 
the following expansion, convergent as an $L^{2}(\mathcal{P})$-limit: 
\begin{equation*}
V_{h_{n}}\cW\left( \frac{\bar{z}}{\sqrt{\pi
		     }}\right) =e^{ix\xi
}e^{-\frac{\left\vert z\right\vert
^{2}}{2}}\sum_{k=0}^{\infty }\zeta 
_{k}H_{k,n}(z,\zbar)=e^{ix\xi }e^{-\frac{\left\vert z\right\vert^{2}
}{2}}f_{n}(z,\zbar), 
\label{calc}
\end{equation*}
where\ $\zeta _{k}=\langle \cW,h_{k}\rangle \sim N_{\C}(0,1)$
i.i.d.\ The spectrogram is defined as the square intensity function of $
V_{g}f(x,\xi )$: $\spec_{g}f(z)=\left\vert V_{g}f(x,\xi )\right\vert ^{2}$.
Thus, the spectrogram of white noise with Hermite windows is 
\begin{equation}
\spec_{h_{n}}\cW(\frac{\bar{z}}{\sqrt{\pi }})
=\left\vert f_{n}(z,\zbar)e^{-\frac{\left\vert z 
\right\vert^{2}}{2}}\right\vert^{2} 
\label{spec}
\end{equation}
and the zeros of $\spec_{h_{n}}\cW(z)$ coincide with the zeros of $
f_{n}(z,\zbar)$ 
under the identification $z = x+i\xi$ and the scaling in
\eqref{spec}. 
We thus can rewrite Theorem~\ref{Main} in spectrogram
language as follows.

\begin{theorem}\label{Mainprime1}
For fixed $k \ge 1$, as $n \to \infty$, 
the short-range correlations between the zeros of
 $\spec_{h_{n}}\cW(z)$ and
 $\spec_{h_{n+k}}\cW(z)$ are 
\begin{equation*}
g_{n,n+k}(z,z) = 
\begin{cases}
\disp 1-\frac{5}{4n^{2}}+O\left( \frac{1}{n^{3}}\right) <1 & \text{if $k=1$}, \\[4mm]
\disp \frac{4}{3}+
\frac{1}{12n^2}+O\left( \frac{1}{n^3}\right) 
>1 &
 \text{if $k=2$}, 
\\[4mm]
\disp 1 & \text{if $k\geq 3$}.  
\end{cases}
\end{equation*}
Thus, the short-range correlations
between zeros of $\spec_{h_{n}}\cW(z)$ and zeros of 
$\spec_{h_{n+k}}\cW(z)$ display repulsion if $k=1$, 
attraction if $k=2$, and no short-range second-order correlation in the contact
 limit if $k\geq 3$. Moreover, $g_{n,n+1}(z,z)$ increases
 with $n$ and $g_{n,n+1}(z,z) \to 1$, and
$g_{n,n+2}(z,z)$ attains its maximum at $n=2$ and then
 decreases for $n\geq 2$, 
with $g_{n,n+2}(z,z) \to 4/3$ as $n \to \infty$. 
\end{theorem}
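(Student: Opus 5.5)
This statement is Theorem~\ref{Main} rewritten in spectrogram language, so the plan is to transfer Theorem~\ref{Main} and the exact formulas \eqref{precise} through the identification \eqref{spec}. Then I will supplement with the monotonicity claims, which must be read off from \eqref{precise}.

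\emph{Transfer.} By \eqref{spec}, $\cZ_{\spec_{h_n}\cW}$ is the image of $\cZ_{f_n}$ under the fixed linear bijection $\Phi(z)=\bar z/\sqrt{\pi}$ of $\R^2$. This map is the same for every $n$. Its Jacobian is the constant $1/\pi$, since it is the composition of a reflection with a dilation by $\pi^{-1/2}$.

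Under such a map, one-point and two-point intensities transform by the same Jacobian factors: $\rho^{(1)}$ picks up $\pi$, and $\rho^{(2)}$ picks up $\pi^2$. These factors cancel in the normalized ratio $g_{n,n+k}$. Moreover $w\to z$ if and only if $\Phi(w)\to\Phi(z)$. Hence the contact limits \eqref{eq:gzz} for the spectrogram zeros coincide with those of $(\cZ_{f_n},\cZ_{f_{n+k}})$, and the displayed asymptotics follow verbatim from Theorem~\ref{Main}. The $k\ge3$ identity is exact by Remark~\ref{rem:corr=1}.

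The expansions themselves come from expanding \eqref{precise} in $1/n$; I will take them as given from Theorem~\ref{Main}. A quick check for $k=1$ is consistent with them. The denominator is $(n+\tfrac12+\tfrac1{4n+2})(n+\tfrac32+\tfrac1{4n+6})=n^2+2n+\tfrac54+O(n^{-1})$, so $g=1-\tfrac{5}{4n^2}+O(n^{-3})$.

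\emph{Monotonicity for $k=1$.} Write
\[
g_{n,n+1}=\frac{n(n+2)}{a_n a_{n+1}},\qquad a_n=\frac{(n+1)^2+n^2}{2n+1}.
\]
The cleanest route is to show that $1-g_{n,n+1}=\frac{a_na_{n+1}-n(n+2)}{a_na_{n+1}}$ is decreasing. Clearing denominators, this reduces to a polynomial inequality in $n$, which I verify by expanding and checking positive coefficients after the substitution $n\mapsto n$ for $n\ge0$. The limit $1$ follows from the expansion.

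\emph{Monotonicity for $k=2$.} The statement about $g_{n,n+2}$ is the main obstacle: it is a rational function of moderately high degree. I would first confirm the values $79/65$, $5821/4375$ and $9041/6765$ for $n=0,1,2$ directly; these give the increases from $n=0$ to $n=1$ and from $n=1$ to $n=2$. For $n\ge2$, I would write $g_{n,n+2}-g_{n+1,n+3}$ as a single rational function. Its numerator is a polynomial $P(n)$ and its denominator is manifestly positive. I then plan to show $P(n+2)$ has nonnegative coefficients as a polynomial in $n$; failing that, I would use the asymptotic $\tfrac{1}{6n^3}$-type leading behaviour for large $n$ and check finitely many remaining $n$ directly. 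Together with the limit $4/3$ from the expansion, this yields the claimed shape: maximum at $n=2$, then strict decrease to $4/3$.
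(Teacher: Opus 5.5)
Your proposal is correct and follows the paper's route. The statement is Theorem~\ref{Main} transported through \eqref{spec}; your Jacobian-cancellation argument makes explicit what the paper leaves implicit. The monotonicity claims are read off the exact values \eqref{precise}, which the paper asserts without the polynomial verification you sketch. For that verification, note that $s=n+1$ gives $g_{n,n+1}(z,z)=1-\frac{5s^2}{4s^4+1}$, and $s=n+\frac32$ turns $g_{n,n+2}(z,z)$ into an even rational function of $s$; both simplify the checks.
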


\begin{figure}[tbp]
\centering
\includegraphics[scale=0.6]{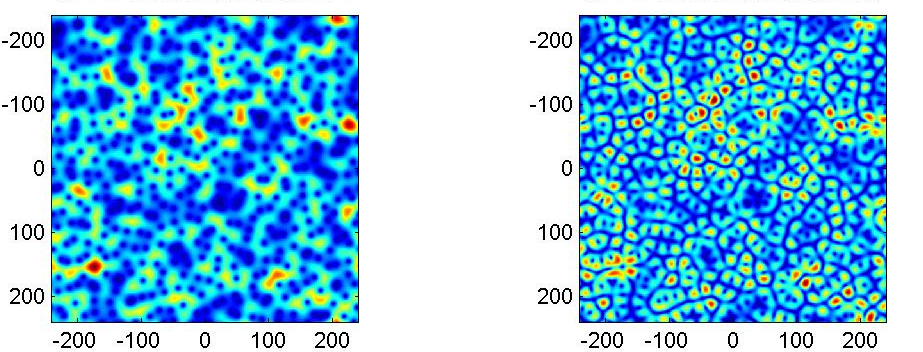}
\caption{The white noise spectrograms with the Gaussian (left) and the first
Hermite window (right). Dark blue areas correspond to low energy (around
zeros) and orange to high energy (around local maxima). One can observe that
the locations with local maxima of the first picture have low energy on the
second, while the local maxima of the second picture tend to occupy low
energy regions of the first.}
\end{figure}

Flandrin \cite[Section 10.2]{Flbook} has observed that the local maxima of
consecutive spectrograms tend to occur in different places, so that the
energy patches of the whole sequence $\{\spec_{h_{n}}\cW
(z)\}_{n=0}^{\infty }$ tend to be negatively correlated with each other.
The heuristics involving the allocation of energy patches is supported by
Theorem~\ref{Mainprime1}: zeros of consecutive $\spec_{h_{n}}\cW(z)$ tend
to occur in different places and, with exception of the case
$k=2$ in Theorem~\ref{Mainprime1}, the zeros of the remaining sequence
$\{\spec_{h_{n}}\cW(z)\}_{n=0}^{\infty }$ 
tend to be uncorrelated with each other in the contact limit. 
Such considerations provide a rationale for the use of multitaper combined
with reassignment
methods as a tool to sharpen spectrogram resolution \cite{XFlandrin,ConceFT}
and suggest that, as $N$ grows, the average 
\begin{equation}
\frac{1}{N}\sum_{n=0}^{N-1}\spec_{h_{n}}\cW(z)  \label{average}
\end{equation}
provides increasingly better estimates for the flat spectrum
of the white noise. In practice a reduced number of windows is used 
\cite{ConceFT,ImpConcEFT}; optimal values of $N=4$ have been observed in the
supplementary material of \cite{ConceFT}, and $N=2$ in the Morse wavelet
case, which has a similar structure, with the first transform applied to
analytic white noise \cite{BH,KolSapmTA} being a hyperbolic Gaussian
Analytic Function \cite{peres2005zeros,SHIRAIHyperbolic}. These curious
optimality properties are explained by analyticity of the first transform 
\cite{AscBrun,AnalyticWavelet}, the optimal concentration properties of the
first Hermite and Morse functions \cite{DauTF,DauTS}, and by the
correlations in Theorem~\ref{Mainprime1}. In general, the negative and low
correlations among energy patches of the sequence $\{\spec_{h_{n}}\cW
(z)\}_{n=0}^{\infty }$ support, in the case of white noise, the efficiency
of the multitaper Wigner-Ville \cite{BB,FranzH} spectral estimation methods
in \cite{BB}, using the average (\ref{BB_0}). In light of \cite[Section
10.2]{Flbook}, Theorem~\ref{Mainprime1} already suggests that the $N\rightarrow
\infty $ limit of the average (\ref{average}) for $\nu =\cW$ is $1$.
Using \eqref{spec}, Theorem~\ref{thm:functionalLLN} and
Theorem~\ref{thm:functional-clt} yield the following
consequence. 

\begin{theorem}\label{thm:functionalLLN-prime}
For every compact $K\subset \C$, as $N \to \infty$, 
\begin{equation*}
\frac{1}{N}\sum_{k=0}^{N-1} \spec_{h_{k}}\cW(z) 
\to 1 \quad \text{a.s. in $C(K)$}.
\end{equation*}
\end{theorem}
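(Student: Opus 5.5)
The statement is Theorem~\ref{thm:functionalLLN} rewritten in spectrogram language, so the plan is to transport it through the identity \eqref{spec}. For each $n$, \eqref{spec} gives, almost surely,
\[
\spec_{h_n}\cW\Bigl(\frac{\bar z}{\sqrt{\pi}}\Bigr)=|X_n(z)|^2 .
\]
Here the $\zeta_k=\langle \cW,h_k\rangle$ are i.i.d.\ $N_{\C}(0,1)$, so the family $\{f_n\}$ built from these coefficients has exactly the law used in Sections~\ref{sec:proof12}--\ref{sec:proof34}. Averaging over $n=0,\dots,N-1$ yields
\[
\frac1N\sum_{k=0}^{N-1}\spec_{h_k}\cW(w)=S_N(\Phi(w)),
\qquad \Phi(w):=\sqrt{\pi}\,\bar w ,
\]
for all $w\in\C$, simultaneously in $N$. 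This uses only countably many almost sure identities: one per $n$, holding as identities of continuous functions. Continuity holds because both sides are continuous in $z$. For the left side, the distributional STFT of $\cW\in\cS_0'(\R)$ against a Schwartz/Feichtinger window is continuous in $(x,\xi)$. For the right side, $f_n$ is real-analytic.

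Given a compact $K\subset\C$, set $K':=\Phi(K)$. Since $\Phi$ is a homeomorphism of $\C$ (a conjugation followed by a dilation), $K'$ is compact. Then
\[
\sup_{w\in K}\Bigl|\frac1N\sum_{k=0}^{N-1}\spec_{h_k}\cW(w)-1\Bigr|=\sup_{z\in K'}|S_N(z)-1| ,
\]
and Theorem~\ref{thm:functionalLLN}, applied to $K'$, shows that the right-hand side tends to $0$ almost surely. This is the claim. The same change of variables also transfers the H\"older version, Theorem~\ref{thm:holder}, since $\Phi$ is bi-Lipschitz.

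The only real obstacle is making sure the identification \eqref{spec} holds almost surely as functions, rather than merely pointwise in $L^2(\mathcal P)$ for each fixed $z$. The expansion of $V_{h_n}\cW$ was stated as an $L^2(\mathcal P)$-limit. I would handle this as follows.
\begin{itemize}
\item Fix a countable dense set $D\subset\C$. For each $z\in D$ and each $n$, the two sides agree almost surely, so they agree simultaneously on $D$ for all $n$ off a null set.
\item Both sides are continuous in $z$. The left side is continuous because $(x,\xi)\mapsto M_\xi T_x h_n$ is continuous into $\cS_0(\R)$. The right side is continuous because $f_n$ is real-analytic, by the proposition of Subsection~\ref{subsec:isolated}.
\item Agreement on $D$ together with continuity gives agreement everywhere.
\end{itemize}
With this in place, the rest is the purely deterministic change of variables above.
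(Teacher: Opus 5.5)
Your proposal is correct and follows the paper's route: the paper also obtains this theorem by transporting Theorem~\ref{thm:functionalLLN} through the identity \eqref{spec} and the change of variables $w\mapsto \sqrt{\pi}\,\bar w$, which maps compacts to compacts. Your extra step, which upgrades \eqref{spec} from a fixed-$z$ almost-sure identity to an almost-sure identity of continuous functions via a countable dense set and continuity of both sides, is left implicit in the paper, and you handle it correctly.
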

As mentioned before, this theorem can be strengthened to 
convergence in $C^{0,\alpha}(K)$ for any $0<\alpha<1$.

\begin{theorem}\label{thm:functionalCLT-prime}
For every compact $K\subset \C$, as $N \to \infty$, 
\begin{equation*}
\frac{1}{\sqrt{N}} \sum_{k=0}^{N-1}
\Big( \spec_{h_{k}}\cW(\sqrt{N} \cdot ) - 1 \Big) 
\inlawto \widetilde{\cG} \quad \text{ in $C(K)$},  
\end{equation*}
where $\widetilde{\cG}$ is the Gaussian process 
with covariance $\kappa(\sqrt{\pi}|\xi-\eta|)$. 
\end{theorem}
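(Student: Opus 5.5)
The plan is to reduce Theorem~\ref{thm:functionalCLT-prime} to Theorem~\ref{thm:functional-clt} with $z_0=0$. The reduction uses the identity \eqref{spec} together with a deterministic linear change of variables, which is continuous on path spaces. By \eqref{spec}, for every $z\in\C$ we have $\spec_{h_k}\cW(\bar z/\sqrt{\pi}) = |X_k(z)|^2$. Equivalently, for $w\in\C$,
\[
\spec_{h_k}\cW(w)=|X_k(\sqrt{\pi}\,\bar w)|^2 .
\]
Hence, for $\xi\in\C$,
\[
\frac{1}{\sqrt N}\sum_{k=0}^{N-1}\Big(\spec_{h_k}\cW(\sqrt N\,\xi)-1\Big)
=\sqrt N\big(S_N(\sqrt N\,\sqrt{\pi}\,\bar\xi)-1\big)=\cG_N(\sqrt{\pi}\,\bar\xi),
\]
with $\cG_N=\cG_N^{(0)}$ as in \eqref{eq:defofGN}. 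So the left-hand side is $\cG_N\circ T$, where $T(\xi):=\sqrt{\pi}\,\bar\xi$ is a real-linear bijection of $\C\simeq\R^2$ that scales distances by the factor $\sqrt\pi$.

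Fix the compact set $K$ and put $K':=T(K)$, which is again compact. The composition map $\Phi:C(K')\to C(K)$, $\Phi(u)=u\circ T$, is a linear isometry for the sup norms, so it is continuous. Theorem~\ref{thm:functional-clt} applied on $K'$ gives $\cG_N\inlawto\cG$ in $C(K')$. The continuous mapping theorem then yields $\cG_N\circ T\inlawto \cG\circ T$ in $C(K)$.

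It remains to identify the limit. The process $\widetilde{\cG}:=\cG\circ T$ is a centered real Gaussian process. Since $|T\xi-T\eta|=\sqrt\pi|\xi-\eta|$, its covariance is
\[
\E[\widetilde{\cG}(\xi)\widetilde{\cG}(\eta)]=\kappa(|T\xi-T\eta|)=\kappa(\sqrt{\pi}|\xi-\eta|),
\]
which is exactly the stated covariance. Continuity of its sample paths on $K$ follows from the version of $\cG$ on $K'$ given by Proposition~\ref{prop:holder-version-limit}.

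The only point needing care is the normalization in \eqref{spec}: the argument there is $\bar z/\sqrt{\pi}$, and \eqref{spec} holds as an almost sure equality of random fields, since it comes from the $L^2(\mathcal P)$ expansion. Because both sides are continuous fields agreeing at every point almost surely, they agree as processes on a countable dense set and therefore coincide in law as $C(K)$-valued random elements. This makes the identification with $\cG_N\circ T$ legitimate. The complex conjugation in $T$ has no effect, since $\kappa$ depends only on distance. I do not expect a genuine obstacle here; the argument is a bookkeeping check of the scaling constant $\sqrt\pi$.
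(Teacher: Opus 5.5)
Your proposal is correct and takes the same route as the paper, which simply states this theorem as a consequence of \eqref{spec} and Theorem~\ref{thm:functional-clt}. You fill in the details the paper leaves implicit: the change of variables $\xi\mapsto\sqrt{\pi}\,\bar\xi$, the continuous mapping theorem applied to the isometric composition map $C(T(K))\to C(K)$, and the identification of the covariance $\kappa(\sqrt{\pi}|\xi-\eta|)$.
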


\subsection{Remarks on spectrogram averages}

We conclude with two remarks on spectrogram averages. First, we
compare the white-noise case with deterministic phase-space
localization. Second, we discuss a possible correlation-aware
choice of finitely many Hermite windows. 

To understand how Theorems~\ref{thm:functionalLLN-prime} and
\ref{thm:functionalCLT-prime} reflect the lack of concentration of white-noise spectrograms, one can compare
Theorem~\ref{thm:functionalLLN} with the deterministic results in
\cite{AGR, KLS25}, where it was shown that, for a deterministic rotationally invariant
window $g$, 
\begin{equation*}
\frac{1}{N}\sum_{k=0}^{N-1}\spec_{h_{k}}g(z)\rightarrow 
\chi_{\mathbb{D}}(z) \quad \text{in $L^1$},  
\end{equation*}
where $\mathbb{D}$ is the disk determined by the phase-space
localization of $g$. 
Thus, the phase space localization of the signal determines the
limit of spectrogram averages. In the deterministic case, 
the rotational invariance of $g$ leads to a
phase-space representation $V_{h_{k}}g(z)$ essentially supported on circular
domains, and this is reflected on the average asymptotics. 
By contrast, for white noise, the i.i.d. behavior of
$V_{h_{k}}\cW(z)$ implies that $\cW$ is
well-spread among the entire plane $\C$, leading
instead to the flat limit $\chi_{\C}(z)=1$.

Theorem~\ref{Mainprime1} also suggests that one may obtain a
better approximation to the spectrum
by averaging pairs of consecutive windows separated by gaps
of two levels, thereby avoiding positively
correlated pairs at distance $2$. More precisely, this excludes the pairs
involving $\spec_{h_{4k}}\cW(z)$ and
$\spec_{h_{4k\pm 2}}\cW(z)$, as well as those involving
$\spec_{h_{4k+1}}\cW(z)$ and
$\spec_{h_{4k+1\pm 2}}\cW(z)$. This motivates the alternative
average
\begin{equation*}
\frac{1}{2N}\sum_{k=0}^{N-1}
\left[\spec_{h_{4k}}\nu(z)+\spec_{h_{4k+1}}\nu(z)\right].
\end{equation*}
Heuristically, this averaging scheme may yield better results than
\eqref{BB_0}, both in multitapering \cite{BB} and in methods for sharpening
spectrogram resolution \cite{XFlandrin,ConceFT,ImpConcEFT}.

\begin{acknowledgement}
We are grateful to Zouha\"ir Mouayn for inviting us to the
conference ICMMP19 in Marrakech in 2019, where this project was initiated.
We also thank R\'emi Bardenet, who suggested that the experiments reported
in [34, Section~10.2] seemed to point to the existence of a kind of
``planar interlacing property.'' We are glad to confirm his intuition:
at least for zeros, such a pattern exists. We thank Patrick Flandrin and
Hau-Tieng Wu for encouraging comments on earlier versions of
 this manuscript.
L.~D.~Abreu was supported by FWF Project
 10.55776/PAT8205923. T.~Shirai was supported by JSPS
 KAKENHI Grant Numbers JP18H01124, JP20K20884, JP22H05105,
 JP23H01077, JP23K25774 and also supported in part by
 JP20H00119 and JP21H04432.
\end{acknowledgement}

\end{document}